\numberwithin{equation}{section}
\def\og{{}^{0}\mathbf{g}}
\def\Div{\mbox{{\bf Div}}}
\def\a{{\alpha}}
\def\b{{\beta}}
\def\be{{\beta}}
\def\Ga{\Gamma}
\def\De{\Delta}
\def\eps{\epsilon}
\def\Si{\Sigma}
\def\varep{\varepsilon}
\def\vphi{\varphi}
\def\pr{{\partial}}
\def\al{\alpha}
\def\c{\cdot}
\def\AA{{\mathcal A}}
\def\BB{{\mathcal B}}
\def\NN{{\mathcal N}}
\def\II{{\mathcal I}}
\def\FF{{\mathcal F}}
\def\HH{{\mathcal H}}
\def\LL{{\mathcal L}}
\def\TT{{\mathcal T}}
\def\SS{{\mathcal S}}
\def\NN{{\mathcal N}}
\def\JJ{{\mathcal J}}
\def\KK{{\mathcal K}}
\def\RR{{\mathcal R}}
\def\QQ{{\mathcal Q}}
\def\Db{\bar{\D}}
\def\D{{\bf D}}
\def\M{{\bf M}}
\def\Q{{\bf Q}}
\def\P{{\bf P}}
\def\Z{{\bf Z}}
\def\T{{\bf T}}
\def\g{{\bf g}}
\def\RRR{{\mathbb R}}
\def\HHH{{\mathbb H}}
\def\f12{{\frac 1 2}}
\def\tr{\mbox{tr}}
\def\f{\widetilde{f}}
\def\rH{\,r_{\HH}}
\newcommand{\bea}{\begin{eqnarray}}
\newcommand{\eea}{\end{eqnarray}}
\def\beaa{\begin{eqnarray*}}
\def\eeaa{\end{eqnarray*}}
\newcommand{\nn}{\nonumber}
\def\ba{\begin{array}}
\def\ea{\end{array}}
\def\Phiring{\mathring{\Phi}}
\def\Db{\mathring{\D}}
\def\Llin{\,  ^{\mbox{(lin)}} \mkern-4mu \LL}
\def\NI{\noindent}
\def\Ricc{\mbox{Ric}}
\def\r_h{{r_{\mathcal{H}}}}
\newtheorem{theorem}{Theorem}[section]
\newtheorem{lemma}[theorem]{Lemma}
\newtheorem{proposition}[theorem]{Proposition}
\newtheorem{corollary}[theorem]{Corollary}
\newtheorem{definition}[theorem]{Definition}
\newtheorem{remark}[theorem]{Remark}
\begin{document}

\title[Global stability of the wave-map equation in Kerr spaces]{On the global stability of the wave-map equation in Kerr spaces with small angular momentum}

\author{Alexandru D. Ionescu}
\address{Princeton University}
\email{aionescu@math.princeton.edu}

\author{Sergiu Klainerman}
\address{Princeton University}
\email{aionescu@math.princeton.edu}

\thanks{The first author is supported in part by a Packard
Fellowship and NSF grant DMS-1065710.}
\thanks{The second author is supported by  the NSF grant  DMS-1065710}

\begin{abstract}
This paper is motivated by the problem of the nonlinear stability of the Kerr solution  for axially symmetric perturbations.   We consider   a   model problem concerning the  axially symmetric perturbations  of  a  wave map   $\Phi$  defined from  a fixed Kerr solution $\KK(M,a)$, $0\le a \le M $,
with values  in the two dimensional hyperbolic space $\HHH^2$.   A particular  such wave map is given
 by  the    complex     Ernst potential     associated  to     the  axial Killing vectorfield    $\Z$  of $\KK(M,a)$.
  We conjecture  that this     stationary solution      is stable, under small axially symmetric perturbations,  in the  domain of outer communication (DOC) of  $\KK(M,a)$, for all $0\le a<M$ and we provide preliminary support for  its validity,  by  deriving convincing stability estimates  
  for the   linearized  system.

\end{abstract}
\maketitle

\setcounter{tocdepth}{1}
\tableofcontents

\section{Introduction}\label{intro}

According to the    general expectations   the  Kerr  family    $\KK(a,M)$, in the sub-extremal    regime  $|a|<M$, is stable under general perturbations.
More precisely, it is expected  that:
\bigskip 

 {\bf  Kerr Stability      Conjecture.}     {   \it  An initial data set $(\Si_0,  g_0, k_0) $,  sufficiently close  to the initial data set  of a   fixed  sub-extremal   Kerr   spacetime  $\KK(M_i, a_i)$, admits a maximal, vacuum, future,  Cauchy development  $(\M, \g)$,  with a complete   future  null infinity  $\II^{+}$   and   whose    causal  past      $J^{-}(\II^+)$ is bounded in the future by a  smooth, complete,   event horizon $\HH^+$.      Moreover     $(\M, \g)$    remains close 
     to    $\KK(M_i, a_i)$  and      approaches  asymptotically   another       sub-extremal Kerr   spacetime  $\KK(M_f, a_f)$. 
}
\bigskip

Despite  its   extraordinary importance, in both mathematical and astrophysical \footnote{
     If  the Kerr family would  turn out  to be   unstable  under  perturbations,
      black holes   would be nothing more than mathematical artifacts.   See \cite{Chand}   for a comprehensive  account of  efforts made by physicists   
       to establish  the  linear stability of the Kerr  family.   }    terms,     and despite          half a century 
       of   sustained   efforts   to settle it, the conjecture remains wide open.      The main known  mathematical   
        arguments  in  favor  of the conjecture     are in fact      few   and, so far,  not at all decisive.
        \begin{enumerate}
        \item     We know  that the  Minkowski space, corresponding to $a=0$, $M=0$     is stable, see \cite{Ch-Kl}. 
        \item      We  know that,   perturbativly,   the Kerr family     exhausts  all   stationary, smooth, 
               solutions  of the Einstein vacuum   equations,    see    \cite{I-Kl0} and  \cite{A-I-Kl}.   In other words, any 
          stationary solution sufficiently  close to a        sub-extremal  Kerr   
            must    belong  to the  Kerr  family.   A full   review of    rigidity 
                  results  in the smooth setting   is discussed in   \cite{I-Kl}.
            
            \item We  possess  a   significantly large class of   examples  of dynamical  black holes, 
            settling down to   a sub-extremal Kerr, constructed  from infinity, see \cite{D-H-R}.

          \item  Most importantly, we have now a   satisfactory    understanding of  the so called\textit{ poor man linearization}.   More
           precisely,  we have a   general method   for  establishing    boundedness and  quantitative
               decay  of solutions            to  the   scalar wave equation $\square_{\g_{M, a}} \phi=0$,  
                   for    all   sub-extremal    Kerr  metrics 
             $\g_{M, a}$.     Such results  were  first established  in  Schwarzschild, see  
                 \cite{B-S1}, \cite{B-S2}, \cite{B-S3}, \cite{B-St}, \cite{DaRo1}, \cite{MaMeTaTo}  and later  extended 
                 for  $|a| \ll  M$ in   \cite{DaRo2},  \cite{TaTo}, \cite{A-Blue1}. The  full sub-extremal regime   was recently
                 settled   in \cite{mDiRySR2014}.

           \item    We have   results establishing     the non-existence of  exponentially growing modes 
            for the more  realistic    linearized  Teukolsky equations, see \cite{P-T}, \cite{Wh}\footnote{ Results
            on boundedness and decay  for   these equations   near Schwarzschild were 
            recently announced by Dafermos, Holzegel and Rodnianski, see  \cite{Daf}.}.      
                       \end{enumerate}

            The    goal of this paper is to provide additional evidence for the conjecture              
            in the special  case          of axi-symetric perturbations.

\subsection{A non-linear model problem}

 As well known (see  \cite{gW1990}    )  the Ernst potential  $\Phi=(\Phi^1, \Phi^2)$  of a Killing vectorfield  $\Z$  on a  $3+1$ dimensional   Einstein-vacuum manifold $(\M, \g)$
  can be interpreted as a wave map   $\Phi:\M\longrightarrow \HHH^2$   where $\HHH^2$  denotes   the  upper-half   Poincare space   with constant negative curvature  $K=-1$.   More precisely,
   \bea
     \square_\g\Phi ^a+ \g^{\mu\nu}\Ga^a_{bc}(\Phi )\pr_\mu\Phi ^b \pr_\nu \Phi ^c=0,
 \label{eq:Wave-Map}
     \eea
where $\Ga$ denotes the Christoffel symbols of the metric $h$ of $\HHH$.  The  full, axially symmetric,  space-time  metric  $\g$   decomposes   into  its   dynamic   component   $\Phi$ and         a \textit{reduced} $1+2$ metric $\hat{\g} $  defined on the orbit space $\hat{\M}=\M/\Z$ verifying,
\bea
\Ricc(\hat \g)_{\alpha\beta}=<\pr_\alpha\Phi, \pr_\beta \Phi>_h\label{reduced-Einst}
\eea
Thus, in  axial  symmetry,   the   Einstein vacuum equations   are equivalent\footnote{See \cite{gW1990}  for a very clear exposition of the reduction. Note that  \eqref{eq:Wave-Map}   can also be interpreted as a  wave map from  $ \widehat{\M} $   to  $ \HHH$.    } to the  coupled system
\eqref{eq:Wave-Map}--\eqref{reduced-Einst},  on  the  reduced  space-time  $\widehat{\M}$. A particular, stationary, solution  of the system is  provided  by  the  pair $(  \widehat{\g}_{M,a} ,\Phi_{M,a})=(A, B)$, denoting the   decomposition of  
  the  Kerr metric $\g_{M,a}$ of a fixed  Kerr spacetime $\M=\KK(M,a)$.
The full  problem of the  stability  of the Kerr solution, for axially symmetric perturbations,  can be  reformulated  as   a  problem  of  stability   of this special   solution          for  the  system  \eqref{eq:Wave-Map}--\eqref{reduced-Einst}.
 As this is still  an extremely difficult problem we  make one    further important  simplification
   by  partially linearizing  the system, that is we  fix the  reduced  metric  $  \widehat{g}=\widehat{\g}_{M,a} $ 
    but allow   fully nonlinear perturbations of  $\Phi_{M,a}$. It is easy to see that this amounts  
     to    the  problem of  stability     of axially symmetric  perturbations of    the  stationary solution  $\Phi_{M,a}$  of the   wave map system \eqref{eq:Wave-Map},    where   $\g$ is  fixed to  be   the Kerr metric $\g_{M,a}$.  
     \bigskip
     
  \noindent   {\bf Partial Stability   Conjecture.} \textit{The stationary solution   $\Phi_{M,a}:\KK(M,a)\longrightarrow \HHH $ of the   wave  map system \eqref{eq:Wave-Map} with $\g=\g_{M,a}$    the   metric   of $\KK(M,a)$, $|a|<M$,  is     future      assymptotically stable  in   the  domain of outer communication   of $\KK(M,a)$,
       for all   smooth,    axially symmetric,  admissible, perturbations\footnote{  That is:  for all   axially symmetric   initial data,  defined  on a spacelike hypersurface $\Si_0$, which are sufficiently close   to the  corresponding data of   $\Phi_{M,a}$ and vanishing in a suitable way on the axis of symmetry.}. }

\bigskip

\begin{remark}
We note that the conjecture is   consistent  with the full nonlinear stability conjecture,  for  axially symmetric  perturbations.
 More precisely  the validity of the     Kerr stability conjecture,     for axially symmetric perturbations,   implies (in principle)   the validity           of    our  partial stability  conjecture,  at least     for initial data   in the orthogonal  complement of   a finite dimensional space (corresponding to possible modulation).     In this paper we produce   convincing evidence  that the  conjecture is    in fact   true    for all  initial  data.  
\end{remark}

We take the first  step  in proving the conjecture   by  deriving   stability estimates  
  for the   linearized  system.       More precisely  we   introduce    the linearized  variables           
    \beaa
        \Phi=\Phi_{M,a}+  A\Psi,            \qquad \Psi=      (\phi,\psi).
        \eeaa
 and show that the linearized   equations  in  $\Psi$  possess  a  a coercive,     conserved,  energy 
   quantity  (for all $|a|\le M$)      and verify,  at least  for $a/M$ small,    a  Morawetz   type   estimate    comparable to those 
   derived    in recent years, see    \cite{B-S1}, \cite{B-S2}, \cite{B-S3}, \cite{B-St}, \cite{DaRo1}, \cite{MaMeTaTo},       for     the     scalar wave equation $\square \phi=0$.

 \begin{remark}
  In the simplest case $a=0$ the system for $\Psi=(\phi,\psi)$ is the decoupled system
  \begin{equation}
  \label{eq:modelS}
  \square\phi=0,\qquad \square\psi-\Big(\frac{4}{r^2(\sin\theta)^2}-\frac{8M}{r^3}\Big)\psi=0.
  \end{equation}
Note the non-trivial nature of the potential      for the $\psi$ equation,  singular  on the axis.
The precise form  of the potential is important in order     to derive the needed stability estimates.

 \end{remark}
  \subsection{Kerr metric} 
 The domain of outer communications of the Kerr spacetime $\mathcal{K}(M,a)$, in standard Boyer--Lindquist coordinates, is given by
\begin{equation}\label{zq1}
\g_{a,M}=-\frac{q^2\Delta}{\Sigma^2}(dt)^2+\frac{\Sigma^2(\sin\theta)^2}{q^2}\Big(d\phi-\frac{2aMr}{\Sigma^2}dt\Big)^2 +\frac{q^2}{\Delta}(dr)^2+q^2(d\theta)^2,
\end{equation}
where
\begin{equation}\label{zq2}
\begin{cases}
&\Delta=r^2+a^2-2Mr;\\
&q^2=r^2+a^2(\cos\theta)^2;\\
&\Sigma^2=(r^2+a^2)q^2+2Mra^2(\sin\theta)^2=(r^2+a^2)^2-a^2(\sin\theta)^2\Delta.
\end{cases}
\end{equation}
Observe that
\begin{equation}\label{zq2.1}
(2mr-q^2)\Sigma^2=-q^4\Delta+4a^2m^2r^2(\sin\theta)^2.
\end{equation}
Note also   the useful  identities,
\begin{equation}
\label{eq:useful-id}
\frac{\Sigma^2}{q^2}=q^2+(p+1)a^2(\sin\theta)^2,\qquad \Delta=q^2(1-p)+a^2(\sin\theta)^2,\qquad  p:=\frac{2Mr}{q^2}.
\end{equation}
Thus   the metric can also be written in the form,
 \bea
 \label{zq1'}
\g_{ a, M}\, &=& \,    -\frac{\left(\Delta-a^2\sin^2\theta\right)}{q^2}dt^2-
\frac{ 4 aMr }{q^2}   \sin^2\theta     dt  d\phi+\frac{q^2}{\Delta}dr^2+ q^2 d\theta^2+
\frac{ \Si^2}{q^2}\sin^2\theta
d\phi^2
\eea
and,
\beaa
\g_{tt} \g_{\phi \phi}- \g_{t\phi}^2 =-\De (\sin\theta)^2.
\eeaa
The volume element $d\mu$  of $\g$  is given by  
\beaa
d\mu&=&q^2 |\sin\theta| dt dr d\theta d\phi.
\eeaa
We also   note   that  $\T=\pr_t $, $\Z=\pr_\phi$ are both Killing and $\T$  is only time-like in the     complement of the ergoregion,
 i.e.  $q^2> 2 Mr $.

 The    domain of outer communication    of $\KK(M,a)$ is given by, 
\begin{equation*}
\RR=\{(\theta,r,t,\vphi)\in(-\pi,\pi)\times(\rH,\infty)\times\mathbb{R}\times\mathbb{S}^1\},
\end{equation*}
where     $\rH:=M+\sqrt{M^2-a^2}$,  the larger root of $\De$, corresponds to the      event horizon. 
The  metric  posesses the Killing v-fields $\T=\pr_t$ and $\Z=\pr_\phi$.

 The  Ernst potential $\Phiring=(A, B)$  associated to the Killing vector-field $\Z=\pr_\vphi$, is given explicitly  by the formula,
\begin{equation}\label{zq4}
A+iB:=\frac{\Sigma^2(\sin\theta)^2}{q^2}-i\Big[2aM(3\cos\theta-(\cos\theta)^3)+\frac{2a^3M(\sin\theta)^4\cos\theta}{q^2}\Big],\qquad  A=\g(\Z,\Z).
\end{equation}

  One can easily check\footnote{Or derive from first principles, see \cite{gW1990}.}  that $(A, B)$  verify the system,
\begin{equation}\label{zq5}
\begin{split}
&A\square A=\D^\mu A\D_\mu A-\D^\mu B\D_\mu B,\\
&A\square B=2\D^\mu A\D_\mu B.
\end{split}
\end{equation}
where   $\square=\square_{g_{M,a}}$  denotes the 
 usual  wave operator    with respect   to  the metric.
 We can interpret    $\Phiring:=(A, B)$  as   a stationary,   axisymmetric,   wave map   from    $\KK(M,a)$
   to the hyperbolic space  $\HHH^2 =(\RRR_+^2, h)$   with the metric $h$  given by,
\beaa
ds^2=\frac{1}{A^2} \big(dA^2+ dB^2\big)
\eeaa

  \subsection{ Reinterpreting    the   conjecture}
     As mentioned above  the goal of this paper is to investigate  
       the future   global asymptotic   stability,   in the exterior   region of $\KK(M,a)$,       of  the  special   stationary  map  $ \mathring{\Phi}=(A,B)$,  under general axially symmetric  perturbations.  In other words  we  consider   solutions
     $\Phi=(X,Y)$ of the wave map system,
   \begin{equation}
\begin{split}
\label{Wave-Map1}
&X\square X=\D^\mu X\D_\mu X-\D^\mu Y\D_\mu Y,\\
&X\square Y=2\D^\mu X\D_\mu Y.
\end{split}
\end{equation}  
            which are $\Z$-invariant, i.e. $\Z(\Phi^1)=\Z(\Phi^2)=0$,     and  whose initial conditions  on a  given space-like hypersurface in 
     $\RR$ are a small perturbation of  the initial data of $\mathring{\Phi}$.      We have to be careful however   that  the perturbed  map $\Phi=(X, Y)$    has the same axis of rotation as   $\Phiring=(A,B)$,
     i.e.   $\Phi=\Phiring$   on the axis of  symmetry of $\KK(M, a )$,  i.e.   $\sin^2\theta=0$. 
To make sure that this latter   condition is satisfied  we  search for 
  solutions  $\Phi=(X, Y) $     of the form,
\bea
\label{eq:nonl-perturb}
\Phi=\Phiring+  A\Psi,            \qquad \Psi=      (\phi,\psi).
\eea
 with $\psi $ vanishing on the axis of symmetry  $\AA$. 
With these notation  we can interpret   the system \eqref{Wave-Map1}  as a nonlinear system of 
 of equations   for $\Psi$, depending also on  the fixed $\Phiring$,            of the form,
 \bea
 \FF(\Phiring; \Psi)=0.\label{system-FF}
 \eea
  Our   Conjecture   can thus be interpreted as   a statement on the stability of    the trivial solution 
  $\Psi\equiv 0$    fot the  nonlinear system      \eqref{system-FF} .
 \bigskip
 
\NI    {\bf Conjecture.} \textit{ The  trivial solution $\Psi=0$ of the           nonlinear system   \eqref{system-FF}  is     future asymptotically stable    in the exterior region $r\ge r_\HH$ for arbitrary, smooth, axially symmetric,  admissible  (i.e.  such that  $\psi=0$  on  the axis $\AA$)  initial conditions
     on a $\Z$-invariant     spacelike hypersurface.}     
     \bigskip
     \subsection{Main Difficulties}     A simple comparison with the far  simpler case 
     of nonlinear  systems of wave equations    in Minkowski space    shows that we cannot expect
          the conjecture  to  be valid  without  addressing  the following  obstacles.
       \begin{enumerate}
       \item \textit{Strong linear stability.}  
       To start with,      one needs   to show that        the solutions  to the     wave map system   system     cannot  grow out of control.
         It does not suffice   to     show  that    the   solutions   to   the linearized equations  are simply bounded;         one  needs to prove    quantitative decay estimates 
        comparable to the  known decay estimates for   the  standard wave equation in the  Minkowski space $\RRR^{1+3}$. Moreover these  estimates have to be robust, i.e. the methods used in their derivation     can be extended, in principle,   to the nonlinear  equations.

       \item \textit{Nonlinear stability.}   Though  strong linear stability  is  an essential ingredient   in the proof  of nonlinear stability, it is by no means enough. The nonlinear terms of the equation   also have to satisfy   special structural conditions, such as  the null condition.

   \item \textit{Degeneracy on the axis.}  An additional difficulty  is the degeneracy of  our system
    on the axis of symmetry, i.e where $A$  vanishes, see \eqref{eq:modelS}.     Our functional analysis framework, see Definition \ref{MainDef}, is adapted to handle such a situation.
      \end{enumerate}  
  The first  difficulty is  the most serious     one.    The case  when the linearized equation  is simply   $\square_\g \Psi=0$     has now been  well understood in full  generality,  for all $|a|<M$   and   under no  symmetry assumptions,     see \cite{mDiRySR2014}   and the references therein. 
 Our   linearized   equations  differ significantly, however,  from this    case.     
       Indeed    taking the Fr\'echet derivative of $\FF$ with respect to $\Psi$
   we obtaine a linear operator  with coefficients which depend on $\Phiring=(A,B)$ in a non-trivial fashion.
     The linearized  equations  are in fact  of  the form:
 \begin{equation}
 \label{eq:Linear}
 \begin{split}
0&= \square\phi+2\frac{\D^\mu B}{A}\D_\mu\psi-2\frac{\D^\mu B\D_\mu B}{A^2}\phi+2\frac{\D^\mu B\D_\mu A}{A^2}\psi
\\
0&=\square\psi-2\frac{\D^\mu B}{A}\D_\mu\phi-\frac{\D^\mu A\D_\mu A+\D^\mu B\D_\mu B}{A^2}\psi.
\end{split}
\end{equation}
  and cannot be  decoupled.
  It is not apriori clear   that such an equation  possesses a   well defined notion of energy, i.e. a      conserved and   coercive(   integral     quantity     similar to the standard  energy quantity for   $\square\Psi=0$.   Though   the existence of such a quantity is by no means enough   to prove strong linear stability it is an absolutely necessary first step.
  Our first result  is   the following:
  
\begin{theorem}
\label{thm:Energy}
The  linearized  equations  \eqref{eq:Linear}  (for axi-symmmetric solutions  $\Psi$)  admit  an  
  energy-momentum tensor type quantity  $\QQ_{\mu\nu}=\QQ[\Psi]_{\mu\nu}$    and a source   $\JJ_\nu$,  both quadratic in
  $(\Psi, \pr \Psi)$,  depending also on $(\Phiring, \pr\Phiring)$,  verifying the following: 
  \begin{enumerate}
  \item[(a)]   $\QQ(X, Y)>0$, \quad  for any future-oriented,  timelike,  vector-fields $X,Y$; 
  \item[(b)]  $\D^\nu \QQ_{\mu\nu} =\JJ_{\nu}$.
  \item[(c)] $\g(\T, \JJ)=0$.
  \item[(d)]  $\QQ(Z,  X) =0$, \quad  for any vector-field $X$   orthogonal to $\Z$.

  \end{enumerate}

 \end{theorem}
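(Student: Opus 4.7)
The plan is geometric: I would interpret the linearized system \eqref{eq:Linear} as the Jacobi equation for a perturbation $V\in\Gamma(\mathring{\Phi}^*T\HHH^2)$ of the background wave map $\mathring{\Phi}:\KK(M,a)\to\HHH^2$, and then take $\QQ_{\mu\nu}$ to be the Hilbert stress-energy tensor of the resulting second-variation (Jacobi) Lagrangian. Because the target $\HHH^2$ is Riemannian, this tensor inherits a dominant-energy-type positivity; because the background $\mathring{\Phi}$ is $\T$- and $\Z$-invariant, it inherits the Killing conservation properties automatically.

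First I would set up coordinates. The parametrization $\Phi = \mathring{\Phi} + A\Psi$ identifies $V$ in target coordinates as $V = A\phi\,\pr_A + A\psi\,\pr_B$. Using the Christoffel symbols of $h = A^{-2}(dA^2+dB^2)$, namely $\Gamma^A_{AA} = -\Gamma^A_{BB} = \Gamma^B_{AB} = -A^{-1}$, a short computation of the pullback covariant derivative yields
\begin{equation*}
D_\mu V = A\widetilde{\pr}_\mu\phi\,\pr_A + A\widetilde{\pr}_\mu\psi\,\pr_B,\qquad \widetilde{\pr}_\mu\phi := \pr_\mu\phi + \frac{\pr_\mu B}{A}\psi,\quad \widetilde{\pr}_\mu\psi := \pr_\mu\psi - \frac{\pr_\mu B}{A}\phi.
\end{equation*}
The scaling $V = A\Psi$ precisely absorbs the conformal weight of $h$, so that $h(D_\mu V, D_\nu V) = \widetilde{\pr}_\mu\phi\,\widetilde{\pr}_\nu\phi + \widetilde{\pr}_\mu\psi\,\widetilde{\pr}_\nu\psi$. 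The Hilbert stress tensor of the Jacobi Lagrangian then has the form
\begin{equation*}
\QQ_{\mu\nu} = \big[\widetilde{\pr}_\mu\phi\,\widetilde{\pr}_\nu\phi + \widetilde{\pr}_\mu\psi\,\widetilde{\pr}_\nu\psi - \tfrac{1}{2}g_{\mu\nu}(\widetilde{\pr}^\rho\phi\,\widetilde{\pr}_\rho\phi + \widetilde{\pr}^\rho\psi\,\widetilde{\pr}_\rho\psi)\big] + \QQ^{\text{curv}}_{\mu\nu},
\end{equation*}
where $\QQ^{\text{curv}}$ is the explicit curvature contribution from the term $-\tfrac{1}{2}h(R(\cdot, V)\cdot, V)$ in the Lagrangian, built from the constant curvature of $\HHH^2$ and the background gradient $d\mathring{\Phi}$.

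Property (a) then follows from: the kinetic part is a sum of two scalar-wave stress tensors (in $\widetilde{\pr}\phi$ and $\widetilde{\pr}\psi$), each obeying the dominant energy condition because the target is Riemannian; the curvature correction is non-negative because $\HHH^2$ has sectional curvature $-1$. For (b), I would compute $\D^\nu \QQ_{\mu\nu}$ using metric compatibility of $D$ and the commutator $[D_\mu, D_\nu]V = R(\pr_\mu\mathring{\Phi}, \pr_\nu\mathring{\Phi})V$, then substitute the Jacobi equation $D^\nu D_\nu V + R(V, \pr^\nu\mathring{\Phi})\pr_\nu\mathring{\Phi} = 0$ (equivalent to \eqref{eq:Linear} after using the background Ernst equations \eqref{zq5}); the result is an explicit $\JJ_\mu$ quadratic in $(\Psi, \pr\Psi)$ with coefficients in $(\mathring{\Phi}, \pr\mathring{\Phi})$. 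Property (c) is then a Noether consequence: $\T$ is Killing and $\T\mathring{\Phi} = 0$, so the Jacobi Lagrangian is $\T$-invariant, the current $\T^\mu\QQ_{\mu\nu}$ is conserved on shell, and $\g(\T,\JJ) = \D^\nu(\T^\mu\QQ_{\mu\nu}) = 0$. Property (d) uses axisymmetry: since $\Z\Psi = 0$ and $\Z\mathring{\Phi} = 0$, one has $\Z^\nu D_\nu V = \Z V + \Gamma(\mathring{\Phi})(\Z\mathring{\Phi})V = 0$ pointwise, so $\QQ_{\mu\nu}\Z^\nu$ reduces to pure $g_{\mu\nu}$-trace terms that vanish when contracted with any $X\perp\Z$.

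The main obstacle is verifying the identification of the geometric Jacobi equation with \eqref{eq:Linear} in explicit coordinates, and in particular showing that the apparent $1/A$ singularities introduced by the hyperbolic metric $h$ and by the parametrization $V = A\Psi$ cancel correctly at the axis $\AA$ through the background equations \eqref{zq5}. Once this identification is established, the verification of (a)--(d) reduces to the general geometric framework.
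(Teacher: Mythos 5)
Your proposal is correct and takes essentially the same route as the paper: Section 1.5 also constructs $\QQ$ as the Hilbert stress tensor of a second-variation Lagrangian $\LL[\Phiring,\Psi]$ (identified in a footnote there as the quadratic Taylor coefficient of the wave-map Lagrangian, i.e.\ the Jacobi Lagrangian), obtains $\JJ$ from the dependence of $\LL$ on $\Phiring$, and deduces the divergence identity and properties (c), (d) from stationarity and axisymmetry of $\Phiring$ via a diffeomorphism-Noether argument, while (a) follows from the sum-of-squares form of $\QQ$. Your covariant reading --- $E_\mu=\widetilde\pr_\mu\phi$ and $F_\mu=\widetilde\pr_\mu\psi$ as the components of the pullback covariant derivative of $V=A\Psi$, and the $M_\mu M_\nu$ term as the constant-curvature contribution from $\HHH^2$ --- is exactly the geometric content behind the coordinate formulas \eqref{zq30} and \eqref{eq:QQexpression}.
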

  The   underlying reason for   the existence of  a    quantity     verifying   (b) and (c)    is  a somewhat less familiar 
    manifestation  of   Noether's principle, which we     discuss  below. The positivity (a), on the other  hand,    is  a   consequence of the     negative  curvature  properties   of $\HHH$.     The property (d)
    can be easily derived  from the   form of $\QQ$, displayed  below, and the  $\Z$-invariance of $\Psi$.

    As a consequence of     the Theorem we deduce  that   the current $\P_\mu:= \Q_{\mu\nu} \T^\nu$ is conserved, i.e.
    \beaa
    \D^\mu  \P_\mu=0.
    \eeaa 
     which leads, by integration  on    causal domains,    to  conserved   energy type quantities and fluxes.  In view of 
     (d)   the     energy is   a  coercive quantity in $\RR$  with a    mild  degeneracy  on the horizon $r=\rH$.
           Theorem \ref{thm:Energy}  is   thus     a strong  first    indication of the  validity 
       of our conjecture  for all  values of  the Kerr parameters, $|a|<M$.    
         Yet, as alluded above,     the bounds provided  by the energy are not by themselves  enough 
         to   even prove  the boundedness  of solutions  to  the system \eqref{eq:Linear},
         subject to   nice initial conditions.
         
             To actually  go beyond  the bounds provided by the energy  and  
       prove strong linear stability     we encounter        the same    difficulties   as   for  the simpler case 
     of  axially symmetric\footnote{In the case of  general solutions  there is another major obstacle,
      namely the  lack of coerciveness of the  energy  in    the  ergoregion.     The  strong  linear stability of  
       $\square\phi=0$  in Kerr  has  recently  been fully resolved     for all values 
       $|a|<m$ in \cite{mDiRySR2014}.       } solutions of the standard wave 
      equation $\square\phi=0$ in  the DOC of  $\KK(a,m)$,
          i.e.  
       degeneracy   of the   energy at the horizon,   presence of trapped null geodesics
        and    slow  decay at null infinity.     As it is now well understood,   the major  ingredient 
          for proving      strong linear stability     for  linear systems  on black holes is   the derivation
          of an integrated  decay estimate of Morawetz type.    Such estimates,   which    degenerate 
          in the trapping region,    i.e.   region of    $\KK(M,a)$ which contain 
          trapped null geodesics,   are   quite   subtle, and difficult   to derive.

          Fortunately,  in the case  of axial symmetry,
            all trapped null geodesics are   restricted to the   hypersurface 
              at $r=r_*$, the largest  root of  the polynomial    equation in $r$,   $r^3-3 r^2 M+a^2(r+M)=0$.
              This   allows one, in principle, to use  a vector-field  method approach   similar to     that  
               used in the derivation of                the  Morawetz type  integrated  decay estimate 
                 for solutions of  the scalar  wave  equation  in  Schwarzschild.     The main new difficulties are the  presence  of  the 
                  source term  $\JJ$  in   the divergence  equation $\Div \QQ=\JJ$,  and the degeneracy on the axis.
                    We overcome these difficulties in this paper,   for small     values of $a/M$.     Inspired  by  the  $r$-weighted  estimates of      Dafermos--Rodnianski\footnote{Their estimates     provide  similar   decay information
                     for  the \textit{outgoing energy}    associated to null hypersurfaces. },   see  \cite{Da-Ro3},   we also  prove  a stronger  version
                    of  the Morawetz  estimate  which provides  decay information
                     for  an appropriate  notion of  {\textit{outgoing energy}}    associated to space-like hypersurfaces.

                           A precise   version of our second theorem 
                          requires  a  space-like  $\Z$-invariant    foliation    $\Si_t$   of the entire domain of outer communication,     transversal to  the horizon and     whose leaves are  transported  by   $\T$.      In what follows we give a   first, informal,     version of the theorem, for the linearized  
                           equations  \eqref{eq:Linear}   in which we do not specify the foliation.  A more precise version will be given later     in this  section.

                          To state the theorem we  choose a  smooth, increasing    function  $\chi_{\geq 4M}$  supported  for $r\ge 4M$,    equal to $1$  for 
                          $r\ge 6M$,  and   define the {\it{outgoing energy density}} $(e(\phi),e(\psi))$,

\begin{equation*}
\begin{split}
&e(\phi)^2:=\frac{(\partial_1\phi)^2}{r^2}+(L\phi)^2+\frac{M^2\big[(\partial_2\phi)^2+(\partial_3\phi)^2\big]}{r^2}+\frac{\phi^2}{r^2},\\
&e(\psi)^2:=\frac{(\partial_1\psi)^2+\psi^2(\sin\theta)^{-2}}{r^2}+(L\psi)^2+\frac{M^2\big[(\partial_2\psi)^2+(\partial_3\psi)^2\big]}{r^2}+\frac{\psi^2}{r^2}.
\end{split}
\end{equation*}
 where $L$  is the  \textit{future outgoing}     vectorfield,
\begin{equation*}
L:=\chi_{\geq 4M}(r) \Big(\partial_r+\frac{r}{r-2M}\partial_t\Big).
\end{equation*}

            \begin{theorem}\label{MainTheorem0}              Assume that $ (\phi, \psi)$ is an admissible $\Z$-invariant  solution of  the linear system
             \eqref{eq:Linear}. 
                          Then, for any $\alpha\in(0,2)$ and any $t_1\leq t_2  $,
                          \beaa
                          \BB_\a(t_1, t_2)+\int_{\Sigma_{t_2}        }         \frac{r^\alpha}{M^{\alpha}}\big[e(\phi)^2+e(\psi)^2\big]\,d\mu_t\le   C_\alpha \int_{\Sigma_{t_2}        }         \frac{r^\alpha}{M^{\alpha}}&\big[e(\phi)^2+e(\psi)^2\big]\,d\mu_t
                          \eeaa
                          with $d\mu_t$ the induced measure on $\Si_t$ and $\BB_\a$ the  bulk  integral,
                         \begin{equation*}
                         \begin{split}
\mathcal{B}_\alpha(t_1,t_2):=\int_{\mathcal{D}_{[t_1,t_2]}}\frac{r^\alpha}{M^{\alpha}}\Big\{&\frac{(r-r^\ast)^2}{r^3}\frac{|\partial_\theta\phi|^2+|\partial_\theta \psi|^2+\psi^2(\sin\theta)^{-2}}{r^2}+\frac{1}{r}\big[(L\phi)^2+(L\psi)^2\big]\\
&+\frac{1}{r^3}\big(\phi^2+\psi^2\big)+\frac{M^{2}}{r^{3}}\big[(\partial_r\phi)^2+(\partial_r\psi)^2\big]\\
&+\frac{M^2(r-r^\ast)^2}{r^5}\big[(\partial_t \phi)^2+(\partial_t\psi)^2\big]\Big\}\,d\mu.
\end{split}
\end{equation*}
                          
                  \end{theorem}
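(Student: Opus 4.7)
The plan is to prove Theorem \ref{MainTheorem0} via a hierarchy of vector-field multipliers applied to the energy-momentum tensor $\QQ$ of Theorem \ref{thm:Energy}, regarding $a/M$ as a small parameter so the analysis mimics what is known for $\square_\g \phi = 0$ on Schwarzschild. For a multiplier $X$ and scalar weight $w$, one considers the current $\P^X_\mu := \QQ_{\mu\nu} X^\nu + w\,\Psi\cdot\D_\mu\Psi - \tfrac12(\D_\mu w)|\Psi|^2$; using Theorem \ref{thm:Energy}(b) its divergence equals $\tfrac12 \QQ^{\mu\nu}\piX_{\mu\nu} + X^\mu \JJ_\mu$ plus scalar error terms controlled by $\square w$ and the equations of motion. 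Integrating over $\DD_{[t_1,t_2]}$ converts positivity of the bulk into an integrated decay estimate with boundary contributions on $\Sigma_{t_1}$, $\Sigma_{t_2}$ and a flux on $\HH^+$.

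\emph{Step 1 (Energy).} For $X=\T$, $\,^{(\T)}\pi=0$ and $\T^\mu\JJ_\mu=0$ by Theorem \ref{thm:Energy}(c), yielding a conserved energy. Properties (a) and (d) together with the $\Z$-invariance of $\Psi$ give coercivity on $\Sigma_t$, with only a mild degeneracy at $r=\rH$ to be removed in Step 3.

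\emph{Step 2 (Morawetz).} Take $X_{\mathrm{Mor}} = f(r)\partial_{r^\ast}$ with $f$ changing sign at $r = r^\ast$, the largest root of $r^3 - 3Mr^2 + a^2(r+M)=0$, where all axially-symmetric trapped null geodesics lie. A standard Schwarzschild-type choice of $f$ and of a lower-order weight $w(r)$ produces, in the limit $a=0$, the unweighted ($\alpha=0$) part of $\BB_\alpha$: angular and $\T$-derivative densities with weight $(r-r^\ast)^2 r^{-5}$, $M^2 r^{-3}|\partial_r\Psi|^2$, and a zero-order $r^{-3}|\Psi|^2$ term. The axis singularity $(\sin\theta)^{-2}\psi^2$ enters coercively here via Hardy's inequality on $\SSS^2$, corresponding to the borderline potential $4/(r^2\sin^2\theta)$ visible in \eqref{eq:modelS}.

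\emph{Step 3 (Redshift and $r^p$-weighting).} Near $r=\rH$ add a redshift multiplier $Y$ transversal to $\HH^+$ to recover the non-degenerate $(\partial_r\Psi)^2$ bulk. In the far region use the Dafermos--Rodnianski multiplier $\chi_{\geq 4M}(r)\,r^\alpha L$; direct computation of its deformation tensor produces the positive densities $r^{\alpha-1}(L\Psi)^2$, $r^{\alpha-3}|\partial_\theta\Psi|^2$, and $r^{\alpha-3}|\Psi|^2$, while the boundary terms reproduce $\int_{\Sigma_t} r^\alpha [e(\phi)^2+e(\psi)^2]$. Positivity of the bulk restricts $\alpha\in(0,2)$. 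Summing the four identities of Steps 1--3 with carefully chosen positive constants yields the stated bound.

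The main obstacle is the source $\JJ$: unlike in the scalar case $\square\phi=0$, for every $X\ne\T$ the divergence identity for $\P^X$ contains the nontrivial term $X^\mu\JJ_\mu$, and the $B$-dependent first-order coupling between $\phi$ and $\psi$ in \eqref{eq:Linear} must be controlled before the Morawetz bulk can close. To handle this we exploit the explicit structure of $\JJ$ dictated by the Noether-type identity underlying Theorem \ref{thm:Energy}: its coefficients involve $A^{-1}\D B$, which on Kerr is $O(|a|/r^2)$. After a Cauchy--Schwarz against the Morawetz and $r^p$ bulks with small constants, both the $X^\mu\JJ_\mu$ contributions and the cross-coupling terms of \eqref{eq:Linear} can be absorbed provided $a/M$ is sufficiently small, which is exactly the regime of validity stated in the theorem.
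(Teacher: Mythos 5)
Your outline correctly identifies the multiplier hierarchy (energy $\T$, Morawetz $f(r)\partial_{r^\ast}$, redshift, $r^p$-weights), the role of $r^\ast$ as the trapping radius, and the need to treat the source $\JJ$. But there is a genuine gap in how you propose to handle $\JJ$: you claim its coefficients involve $A^{-1}\D B = O(|a|/r^2)$ and can therefore be absorbed by Cauchy--Schwarz against the Morawetz bulk using the smallness of $a/M$. That is not correct. Looking at \eqref{zq52}, $\JJ_\nu = A^{-1}\big(2\D_\nu B\,M^\mu E_\mu - 2\D_\nu A\,M^\mu F_\mu\big) + \dots$; only the first piece carries a factor of $a$. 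The term $-2A^{-1}\D_\nu A\,M^\mu F_\mu$ survives when $a=0$ (Schwarzschild), where $B=0$ and $A=r^2\sin^2\theta$: this is precisely what produces the potential $4/(r^2\sin^2\theta)-8M/r^3$ in \eqref{eq:modelS}. Worse, its $\theta$-component is singular on the axis, since $A^{-1}\D_1 A\sim 2\cot\theta$. This piece of $\JJ$ cannot be treated as a small perturbation, and your current $\P^X_\mu$, which carries no $\psi^2$-correction adapted to $\D A/A$, would produce a divergence contribution that blows up as $\theta\to 0$. The paper handles this by working with the modified current $\widetilde{P}_\mu = P_\mu - A^{-2}(X^\nu\D_\nu A)(\D_\mu A)\,\psi^2$ (see \eqref{OU3} and \eqref{moro2.1}), which regularizes the axis singularity and converts the dangerous part of $\JJ$ into the bulk term $L^4$; this in turn forces the introduction of a dedicated fourth multiplier $(X_{(4)},w_{(4)},m'_{(4)})$, whose only job is to control the residual $\widetilde{L}$ (see Lemma \ref{clan2}). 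Your proposal has no analogue of either step.

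A secondary but still substantive issue: you propose to add the Dafermos--Rodnianski $r^p$-weighted multiplier $\chi_{\geq 4M}(r)\,r^\alpha L$ on top of the local decay estimate. The paper explicitly explains why that sequential scheme does not close here: because the bulk degenerates at $r=r^\ast$ and the boundary bounds from the first two multipliers are weak (see \eqref{other15}--\eqref{other15.2}), the $r$-weighted estimate and the integrated local decay estimate must be proven simultaneously, in the spirit of \cite{MaMeTaTo}. This requires a more delicate third multiplier of the form $X_{(3)}= f_3\partial_2 + \big(\tfrac{f_3}{1-p}+g_3\big)\partial_3$ with a carefully chosen auxiliary function $g_3$ (a large constant for $r\leq C_4 M$, decaying like $r^{\alpha-2}$ at infinity) together with a $1$-form $m'_{(3)}$ determined by completing squares, rather than the plain $r^\alpha L$ multiplier. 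Without that coupling the boundary integrals $\int_{\Sigma_t} \widetilde{P}_\mu n^\mu$ would not be coercive.
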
         
                                                    Note that, as expected the  integrand    of the bulk integral   $\BB_\a$  degenerates at  $r=r_*$.  
                                                   Though  the  presence of the  $r^\a$-weights   in our Morawetz type   estimate appear  to be new  even in the particular case 
                                                   of the  standard     scalar wave equation,  they were  clearly 
                                                     inspired by   the work of Dafermos-Rodnianski     \cite{Da-Ro3}. The  main new idea  in  
                                                       \cite{Da-Ro3} was   to observe that  one can replace the   $(t,r)$  weights
                                                       of the classical conformal multiplier method, along outgoing  null hypersurfaces,   by weights   which depend only 
                                                       on $r$, provided  that one has already derived  a local decay estimate.
                                                       The new twist  in our work is  to  show that  similar  estimates   can be  derived  
                                                       on  spacelike hypersurfaces.       Unlike   in the case of \cite{Da-Ro3}, where the proof  of  $r$-weighted
                                                        estimates  are   can be neatly   separated    from  the  main local decay estimate,     we  are obliged  in our work   to prove 
                                                   them  simultaneously. Proving a simultaneous estimate, on both the space-time integral, requires much more careful choices of the multipliers at infinity.

                                 \subsection{ Proof of Theorem \ref{thm:Energy}}   In this section we give  a first, informal,   derivation of Theorem \ref{thm:Energy}, 
                        based  on    first principles, which can be easily   generalized to other situations.
                        In the next section we   shall     re-derive   the  result by a straightforward verification. 
                        
                      Observe    first  that   the linear  system  \eqref{eq:Linear}
                       is  derivable from    a          Lagrangian\footnote{ One can  identify  $\LL$  as  the   quadratic form in $\Psi$   generated  by   the Taylor expansion at  $\Phiring$ of the  Lagrangian   of the  original, nonlinear, system
                        \eqref{Wave-Map1}.}
                  $\LL[\Phiring, \Psi] $,  $\Phiring=(\Phiring^1, \Phiring^2)=(A,B)$, $\Psi=(\Psi^1, \Psi^2)=(\phi,\psi)$, defined as follows:                  
 \bea
 \LL[\Phiring, \Psi ] &=&\g^{\mu\nu}\big[ \Db_\mu \phi \Db_\nu \phi+ \Db_\mu \psi \Db_\nu \psi+A^{-2}
(\phi \pr_\mu  B -\psi\pr_\mu A)(\phi \pr_\nu  B -\psi\pr_\nu A)\big]\nn\\
 \eea
 with,
 \beaa
 \Db_\mu\phi&=&\pr_\mu  \phi+ A^{-1} \pr_\mu B \, \psi\qquad 
\Db_\mu \psi=\pr_\mu  \psi-  A^{-1} \pr_\mu B \, \phi
 \eeaa
 We then  define, as ususal,   the energy momentum tensor of the linearized    field equation
  to be the quantity,
  \bea
  \QQ[\Phiring, \Psi]_{\mu\nu}&:=&\frac{\pr\LL}{\pr g^{\mu\nu}}-\frac 1 2 \g_{\mu\nu}\LL
     \eea                         
        We also define  the  source:
  \bea
  \JJ[\Phiring, \Psi]_\mu&:=&2\frac{\pr\LL[\psi]}{\pr  \Phiring^c}\pr_\mu \Phiring^c,\qquad c=1,2
  \eea 
  Note that,  in view of   the stationarity  of   $\Phiring$, 
  \beaa
  \T^\mu \JJ_\mu=2\frac{\pr\LL[\psi]}{\pr  \Phiring^c} \T^\mu \pr_\mu \Phiring^c-=0.
  \eeaa
  \begin{lemma} We have  the   local  conservation law:
  \beaa
  \D^\nu \QQ_{\mu\nu}= \JJ_\mu
  \eeaa
  \end{lemma}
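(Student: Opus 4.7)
The plan is to read the claimed identity as the standard Noether/Bianchi identity with source for a covariant Lagrangian depending both on dynamical fields $\Psi=(\phi,\psi)$ and on non-dynamical background fields $\Phiring=(A,B)$: once $\Psi$ satisfies its Euler--Lagrange equations, the only obstruction to covariant conservation of $\QQ_{\mu\nu}$ is the explicit dependence of $\LL$ on $\Phiring$. The first preparatory step is to verify that the Euler--Lagrange equations of $\LL$ in the variables $\phi,\psi$ coincide with the linearized system \eqref{eq:Linear}. Writing $\LL=g^{\mu\nu}H_{\mu\nu}$ with
\[
H_{\mu\nu}=\Db_\mu\phi\,\Db_\nu\phi+\Db_\mu\psi\,\Db_\nu\psi+A^{-2}(\phi\pr_\mu B-\psi\pr_\mu A)(\phi\pr_\nu B-\psi\pr_\nu A),
\]
one computes the momenta $\Pi^\mu_a:=\pr\LL/\pr(\D_\mu\Psi^a)$ and the forcing $\pr\LL/\pr\Psi^a$ and matches them against \eqref{eq:Linear}; this is direct but requires care, because both the gauged derivatives $\Db$ and the potential term contribute to each variation.

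Using $\QQ_{\mu\nu}=H_{\mu\nu}-\tfrac12 g_{\mu\nu}\LL$ and $\D g=0$, I would then rewrite $\D^\nu\QQ_{\mu\nu}=\D^\nu H_{\mu\nu}-\tfrac12\D_\mu\LL$ and expand each piece by the Leibniz rule in its dependencies on $\Psi,\D\Psi,\Phiring,\D\Phiring$. The chain rule gives
\[
\D_\mu\LL=\frac{\pr\LL}{\pr\Psi^a}\D_\mu\Psi^a+\Pi^\nu_a\D_\mu\D_\nu\Psi^a+\frac{\pr\LL}{\pr\Phiring^c}\D_\mu\Phiring^c+\Lambda^\nu_c\D_\mu\D_\nu\Phiring^c,
\]
with $\Lambda^\nu_c:=\pr\LL/\pr(\D_\nu\Phiring^c)$. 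The key point is that $\D_\mu\D_\nu$ is symmetric on scalar fields, so every second-derivative term in $\D^\nu H_{\mu\nu}$ pairs with the corresponding term in $\tfrac12\D_\mu\LL$. After this pairing, the $\Psi$-sector collapses to $(\D_\nu\Pi^\nu_a-\pr\LL/\pr\Psi^a)\D_\mu\Psi^a$, which vanishes by the Euler--Lagrange equations verified above; the analogous cancellation in the $\Phiring$-sector (again via $\D_{[\mu}\D_{\nu]}\Phiring^c=0$) eliminates the $\Lambda^\nu_c\D_\mu\D_\nu\Phiring^c$ piece, leaving a multiple of $(\pr\LL/\pr\Phiring^c)\D_\mu\Phiring^c$. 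Tracking the normalization constants arising from the symmetrization built into $\pr/\pr g^{\mu\nu}$ and the prefactor $\tfrac12$ in $\QQ_{\mu\nu}$ yields precisely the coefficient in $\JJ_\mu$.

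The main obstacle lies in the bookkeeping of this cancellation. The gauged derivatives $\Db_\mu\phi=\pr_\mu\phi+A^{-1}\pr_\mu B\,\psi$ and $\Db_\mu\psi=\pr_\mu\psi-A^{-1}\pr_\mu B\,\phi$ mix dynamical and background variables, so partial derivatives with respect to $\phi,\psi$ produce terms involving $\pr A,\pr B$, and partial derivatives with respect to $A,B,\pr A,\pr B$ produce terms involving $\phi,\psi$. One must therefore separate many cross terms carefully, and in particular ensure that the contributions coming from the variation of $\Db$ with respect to $\D\Phiring$ reassemble correctly into the claimed source rather than leaving stray derivative-of-background remainders. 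The whole mechanism rests on the scalar identity $\D_{[\mu}\D_{\nu]}f=0$, applied componentwise to both $\Psi$ and $\Phiring$.
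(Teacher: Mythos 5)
Your proposal is correct in outline but takes a genuinely different route from the paper's proof of this lemma. The paper varies $(\Psi,\g,\Phiring)$ simultaneously along the flow of an arbitrary vector field $X$, invokes diffeomorphism invariance of the action to get $\frac{d}{ds}\SS(s)\big|_{s=0}=0$, drops the $\int(\pr\LL/\pr\Psi^a)X(\Psi^a)\,dv_\g$ term on-shell, integrates by parts in the $\QQ^{\mu\nu}\,{}^{(X)}\pi_{\mu\nu}$ term, and then reads off $\D^\nu\QQ_{\mu\nu}=\JJ_\mu$ from the arbitrariness of $X$. Your route never touches the action: it expands $\D^\nu\QQ_{\mu\nu}=\D^\nu H_{\mu\nu}-\tfrac12\D_\mu\LL$ pointwise by Leibniz and the chain rule, pairs the symmetric second-derivative terms in both the $\Psi$- and $\Phiring$-sectors using $\D_{[\mu}\D_{\nu]}f=0$, and kills the residual $\Psi$-sector via the Euler--Lagrange equations. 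What each buys: the paper's integral argument is conceptually efficient and structure-agnostic (one never opens up $E,F,M$), at the cost of being informal about whether $\pr\LL/\pr\Phiring^a$ means a partial or a variational derivative; your pointwise version is fully explicit and produces the concrete form of $\JJ_\mu$ as a byproduct, at the cost of the bookkeeping you already flag (because $\Db$ and the potential term mix $\Psi$ and $\Phiring$ in every partial derivative). In fact, your route is essentially what the paper does in Section~\ref{keyiden} as an alternative derivation: equations \eqref{zq32}--\eqref{zq33} and the calculation leading to \eqref{zq36} are the explicit version of the cancellation mechanism you sketch, and Proposition~\ref{prop001}(i) with \eqref{zq52} records the resulting formula for $\JJ$. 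Carrying your plan to completion amounts to reproducing that computation; doing so would also confirm the factor of $2$ and the precise identification of the source term, which are the points your proposal leaves to bookkeeping.
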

  \begin{proof}
  Let $\chi_s$ be the 
one-parameter group of local diffeomorphisms generated by a 
 given vectorfield  $X$.  We shall use  the
 flow $\chi$ to vary the fields $\Psi$ according to  
\beaa 
\g_s& =& (\chi_s)_{*} \g,\quad  
\psi_s= (\chi_s)_{*}\Psi, \quad \phi_s=(\chi_s)_{*}\Phiring
\eeaa 
>From the invariance of the action integral under diffeomorphisms,\,\,
 $\SS(s) =
\SS[\Psi_s,\g_s,\Phiring_s] = \SS[\Psi,\g;\Phiring].$
Therefore,
\beaa
0 & = & \frac{d}{ds}\SS(s)\Bigm|_{s=0} 
=\int \frac{\pr\LL}{\pr\Psi^a}  X(\Psi^a)dv_{\g} +\int \big(\frac{\pr\LL}{\g^{\mu\nu}}-\frac 1 2 \g_{\mu\nu}\Llin \big)\dot{\g}_{\mu \nu} dv_\g+\int\frac{\pr\LL}{\pr\Phiring^a} \dot{\Phiring}^a    dv_{\g} \\
&=& \int \QQ^{\mu\nu} (\D_\mu X_\nu+\D_\nu X_\mu)     dv_{\g}  +2\int \JJ ^\mu  X_\mu dv_\g= -2\int   \D_\nu   \QQ^{\mu\nu} X_\mu  dv_\g+2\int \JJ ^\mu  X_\mu dv_\g
\eeaa
Since the vectorfield  $X^\mu $ is arbitrary we deduce,
\beaa
 -\D_\nu   \QQ^{\mu\nu}+ \JJ^\mu=0
\eeaa
 as desired.
\end{proof}
   In view of the definitions of $\QQ$ and $\LL$  we       can write 
   \bea
   \label{eq:QQexpression}
   \QQ_{\mu\nu}&=&\TT_{\mu\nu}-\frac 1 2 \g_{\mu\nu} ( \tr_\g \TT),      \qquad      \TT_{\mu\nu}:=E_\mu E_\nu+F_\mu F_\nu+M_\mu M_\nu
   \eea
   where,
     \begin{equation*}
  E_\mu:= \Db_\mu \phi=    \pr_\mu\phi+\psi A^{-1}\pr_\mu B,\,\,\,
  F_\mu:=\Db_\mu \psi=\pr_\mu\psi-\phi A^{-1}\pr_\mu B,\,\,\,
  M_\mu:= A^{-1}(\phi\pr_\mu B-\psi\pr_\mu A).
  \end{equation*}
The positivity property (a) is now an immediate consequence of  the  structure    \eqref{eq:QQexpression} of the energy momentum tensor.
Property (a)  is clearly verified    in the region where  $\T$ is time-like. It is well known that  at  every point of  the  ergoregion  where $r>r_H$,
    there exists a linear   combination of $\T$ and $\Z$,    $\T+c \Z$,     which is   timelike.     Therefore, since $\T\c E=\T\c F=\T\c M=0$,
\beaa
0<\TT(\T+c\Z, X)=\TT(\T, X). 
\eeaa
On the other hand, since $X$ is orthogonal to $\Z$, 
\beaa
\g(\T+cZ, X)=\g(\T, X).
\eeaa
Hence,
\beaa
0<\QQ(\T+c\Z, X)=\QQ(\T, X),
\eeaa
as  desired.

     \subsection{ New  coordinates}\label{sect-coord}
              As  well known the    Boyer-Lindquist  coordinates, are   singular  near  the horizon and as such are   not appropriate 
                for our task.   To avoid this difficulty it has become standard   to   define a new set of variables    which      are  well behaved       across the horizon and     coincide  with the Boyer-Lindquist coordinates  away from   it.

           We fix first a smooth function $\chi:\mathbb{R}\to[0,1]$ supported in the interval $(-\infty,5M/2]$ and equal to $1$ in the interval $(-\infty,9M/4]$, and define $g_1,g_2:(\r_h,\infty)\to\mathbb{R}$ such that
\begin{equation}\label{cha1}
g_1'(r)=\chi(r)\frac{2Mr}{\Delta},\qquad g_2'(r)=\chi(r)\frac{a}{\Delta}.
\end{equation}
We define the functions
\begin{equation}\label{cha2}
t_+:=t+g_1(r),\qquad \phi_+:=\phi+g_2(r).
\end{equation}
Therefore
\begin{equation*}
dt_+=dt+\chi(r)\frac{2Mr}{\Delta}dr,\qquad d\phi_+=d\phi+\chi(r)\frac{a}{\Delta}dr.
\end{equation*}
In $(\theta,r,t_+,\phi_+)$ coordinates, the metric $\g$ becomes\footnote{See the appendix  for more calculations in these coordinates.}    
\begin{equation}\label{cha3}
\begin{split}
\g&=q^2(d\theta)^2+\Big[\frac{q^2}{\Delta}(1-\chi^2(r))+\frac{2Mr+q^2}{q^2}\chi^2(r)\Big](dr)^2\\
&+2\chi(r)\frac{2Mr}{q^2}drdt_+-2\chi(r)\frac{a(\sin\theta)^2(q^2+2Mr)}{q^2}drd\phi_+\\
&+\frac{2Mr-q^2}{q^2}(dt_+)^2-\frac{4aMr(\sin\theta)^2}{q^2}dt_+d\phi_++\frac{\Sigma^2(\sin\theta)^2}{q^2}(d\phi_+)^2.
\end{split}
\end{equation}
     
Let
\begin{equation}\label{zq3}
\partial_1=\partial_\theta=\frac{d}{d\theta},\qquad\partial_2=\partial_r=\frac{d}{dr},\qquad\partial_3=\partial_t=\frac{d}{dt_+}=\T,\qquad\partial_4=\partial_\phi=\frac{d}{d\phi_+}=\Z.
\end{equation}
The nontrivial components of the metric $\g$ are
\begin{equation}\label{exp1}
\begin{split}
&\g_{11}=q^2,\qquad \g_{33}=\frac{2Mr-q^2}{q^2},\qquad \g_{34}=-\frac{2aMr(\sin\theta)^2}{q^2},\qquad \g_{44}=\frac{\Sigma^2(\sin\theta)^2}{q^2},\\
&\g_{22}=\frac{q^2}{\Delta}(1-\chi^2(r))+\frac{2Mr+q^2}{q^2}\chi^2(r),\\
&\g_{23}=\chi(r)\frac{2Mr}{q^2},\qquad \g_{24}=-\chi(r)\frac{a(\sin\theta)^2(q^2+2Mr)}{q^2}.
\end{split}
\end{equation}
The metric $\g$ extends smoothly to the larger open set
\begin{equation*}
\widetilde{\RR}=\{(\theta,r,t_+,\phi_+)\in(-\pi,\pi)\times(0,\infty)\times\mathbb{R}\times\mathbb{S}^1\}.
\end{equation*}
For $t\in\mathbb{R}$ and $c\in(0,\infty)$ let
\begin{equation}\label{surf1}
\Sigma^c_t:=\{(\theta,r,t_+,\phi_+)\in\widetilde{R}:t_+=t\text{ and }r>c\}.
\end{equation}
The surfaces $\Sigma_t^{\r_h}$, $t\in\mathbb{R}$, form a $\Z$-invariant foliation of spacelike surfaces of the domain of outer communications of the Kerr spacetime $\mathcal{K}(M,a)$. Moreover, the foliation is compatible with the Killing vector-field $\T$, i. e. $\Phi_{t_1}(\Sigma^c_{t_2})=\Sigma^c_{t_1+t_2}$ for any $t_1,t_2\in\mathbb{R}$,  where $\Phi_t$ denotes  the flow  associated 
 to $\T$. 

 As mentioned  earlier  we are interested in solutions  of  the form   \eqref{eq:nonl-perturb}, i.e., 
 $\Phi=(A',B')=(A,B)+\varepsilon (A\phi,A\psi)$ of the wave-map equation \eqref{zq5}, in  causal   domains  of the form 
\begin{equation}\label{surf2}
\mathcal{D}^{c}_I:=\cup_{t\in I}\Sigma^{c}_t=\{(\theta,r,t_+,\phi_+)\in\widetilde{\RR}:t_+\in I\text{ and }r>c\},
\end{equation}
where $I\subseteq\mathbb{R}$ is an interval and $c<\r_h$. Notice that if $c<\r_h$ then $\mathcal{D}^{c}_\mathbb{R}$ contains a 
small neighborhood of the future event horizon $\mathcal{H}^+$ as well as  the entire domain of outer communication. 
For any $c\in(0,\infty)$ and any interval $I\subseteq\mathbb{R}$ let 
\begin{equation}\label{surf3}
\mathcal{N}^{c}_I:=\{(\theta,r,t_+,\phi_+)\in\widetilde{R}:t_+\in I\text{ and }r=c\}.
\end{equation}
Notice that the hypersurfaces $\mathcal{N}^{c}_I$ are spacelike if $c<\r_h$, null (and contained in the future event 
horizon $\mathcal{H}^+$) if $c=\r_h$, and timelike if $c>\r_h$.

\subsection{Precise version of our second theorem} We define now our main function spaces:

\begin{definition}\label{MainDef}
For any $m\in\mathbb{Z}^+$, $c\in(0,\infty)$, and $t\in\mathbb{R}$ let $H^m(\Sigma_t^c)$ denote the usual $L^2$-based Sobolev space of functions on the hypersurface $\Sigma_t^c$, with respect to the induced Kerr metric (see \eqref{exp1}). Let
\begin{equation}\label{maindef1}
\widetilde{H}^m(\Sigma_t^c):=\Big\{f:\Sigma_t^c\to\mathbb{R}:\,\|f\|_{\widetilde{H}^m(\Sigma_t^c)}:=\|f\|_{H^m(\Sigma_t^c)}+\sum_{m'+m''=1}^m\|(\widetilde{\partial}_1/r)^{m'}\widetilde{\partial}_2^{m''}f\|_{L^2(\Sigma_t^c)}<\infty\Big\},
\end{equation}
where, by definition,
\begin{equation}\label{maindef2}
\widetilde{\partial}_1g:=\Big(\partial_1-\frac{2\cos\theta}{\sin\theta}\Big)g,\qquad \widetilde{\partial}_2g:=\partial_2g.
\end{equation}
For any $g\in C^1(\Sigma_t^c)$ satisfying $\Z(g)=0$ let
\begin{equation}\label{grad}
\nabla g:=(\partial_1g/r,\partial_2g),\qquad \widetilde{\nabla} g:=(\widetilde{\partial}_1g/r,\widetilde{\partial}_2g).
\end{equation}
Finally, let
\begin{equation}\label{maindef3}
\mathbf{H}^m(\Sigma_t^c):=\{(\phi,\psi):\Sigma_t^c\to\mathbb{R}\times\mathbb{R}:\,\|(\phi,\psi)\|_{\mathbf{H}^m(\Sigma_t^c)}:=\|\phi\|_{H^m(\Sigma_t^c)}+\|\psi\|_{\widetilde{H}^m(\Sigma_t^c)}<\infty\}.
\end{equation}
\end{definition}

For any $R\geq 33M/16$ let $\chi_{\geq R}:[0,\infty)\to[0,1]$ denote a smooth increasing function supported in $[R,\infty)$, equal to $1$ in $[2R-2M,\infty)$, and satisfying the natural differential inequalities. Let
\begin{equation}\label{outs4.9}
L:=\chi_{\geq 4M}(r)\Big(\partial_2+\frac{r}{r-2M}\partial_3\Big),
\end{equation}
For any $t\in\mathbb{R}$ and $(\phi,\psi)\in\mathbf{H}^1(\Sigma_t^c)$ we define the {\it{outgoing energy density}} $(e(\phi),e(\psi))$,
\begin{equation}\label{outs101.9}
\begin{split}
&e(\phi)^2:=\frac{(\partial_1\phi)^2}{r^2}+(L\phi)^2+\frac{M^2\big[(\partial_2\phi)^2+(\partial_3\phi)^2\big]}{r^2}+\frac{\phi^2}{r^2},\\
&e(\psi)^2:=\frac{(\partial_1\psi)^2+\psi^2(\sin\theta)^{-2}}{r^2}+(L\psi)^2+\frac{M^2\big[(\partial_2\psi)^2+(\partial_3\psi)^2\big]}{r^2}+\frac{\psi^2}{r^2}.
\end{split}
\end{equation}

We work in the axially symmetric case, therefore the relevant trapped null geodesics are still confined to a codimension $1$ set. Assuming that $a\ll M$, it is easy to see that the equation 
\begin{equation*}
r^3-3Mr^2+a^2r+Ma^2=0
\end{equation*} 
has a unique solution $r^\ast\in(M,\infty)$. Moreover, $r^\ast\in[3M-a^2/M,3M]$.

\begin{theorem}\label{MainTheorem}
Assume that $M\in (0,\infty)$, $N_0:=4$, $a\in[0,\overline{\varep}M]$ and $c_0\in[\r_h-\overline{\varep}M,\r_h]$, where $\overline{\varep}\in (0,1]$ is a sufficiently small constant. Assume that $T\geq 0$, and $(\phi,\psi)\in C^k([0,T]:\mathbf{H}^{N_0-k}(\Sigma_t^{c_0}))$, $k\in[0,N_0]$, is a solution 
of the system
\begin{equation}\label{asump0}
\begin{split}
\square\phi+2\frac{\D^\mu B}{A}\D_\mu\psi-2\frac{\D^\mu B\D_\mu B}{A^2}\phi+2\frac{\D^\mu B\D_\mu A}{A^2}\psi&=\mathcal{N}_\phi,\\
\square\psi-2\frac{\D^\mu B}{A}\D_\mu\phi-\frac{\D^\mu A\D_\mu A+\D^\mu B\D_\mu B}{A^2}\psi&=\mathcal{N}_\psi,
\end{split}
\end{equation}
satisfying
\begin{equation}\label{asump1}
\Z(\phi,\psi)=0\qquad\text{ in }\mathcal{D}^{c_0}_{[0,T]}.
\end{equation}
Then, for any $\alpha\in(0,2)$ and any $t_1\leq t_2\in[0,T]$,
\begin{equation}\label{outs100.9}
\begin{split}
\mathcal{B}_\alpha^{c_0}(t_1,t_2)+\int_{\Sigma_{t_2}^{c_0}}\frac{r^\alpha}{M^{\alpha}}\big[e(\phi)^2+e(\psi)^2\big]\,d\mu_t
&\leq \overline{C}_\alpha\int_{\Sigma_{t_1}^{c_0}}\frac{r^\alpha}{M^{\alpha}}\big[e(\phi)^2+e(\psi)^2\big]\,d\mu_t\\
&+\overline{C}_\alpha\int_{\mathcal{D}^{c_0}_{[t_1,t_2]}}\frac{r^\alpha}{M^{\alpha}}
 \big[e(\phi,\mathcal{N}_\phi)+e(\psi,\mathcal{N}_\psi)\big]\,d\mu,
\end{split}
\end{equation}
where $\overline{C}_\alpha$ is a large constant that may depend on $\alpha$,
\begin{equation}\label{outs102.9}
\begin{split}
\mathcal{B}_\alpha^{c_0}(t_1,t_2):=\int_{\mathcal{D}^{c_0}_{[t_1,t_2]}}\frac{r^\alpha}{M^{\alpha}}\Big\{&\frac{(r-r^\ast)^2}{r^3}\frac{|\partial_1\phi|^2+|\partial_1\psi|^2+\psi^2(\sin\theta)^{-2}}{r^2}+\frac{1}{r}\big[(L\phi)^2+(L\psi)^2\big]\\
&+\frac{1}{r^3}\big(\phi^2+\psi^2\big)+\frac{M^{2}}{r^{3}}\big[(\partial_2\phi)^2+(\partial_2\psi)^2\big]\\
&+\frac{M^2(r-r^\ast)^2}{r^5}\big[(\partial_3\phi)^2+(\partial_3\psi)^2\big]\Big\}\,d\mu,
\end{split}
\end{equation}
and, for $f\in\{\phi,\psi\}$,
\begin{equation}\label{outs100N}
e(f,\mathcal{N}_f):=|\mathcal{N}_f|\Big[(Lf)^2+\frac{M^2\big[(\partial_2f)^2+(\partial_3f)^2\big]+f^2}{r^2}\Big]^{1/2}.
\end{equation}
\end{theorem}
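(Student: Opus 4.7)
The strategy is the vector-field method applied to the energy-momentum tensor $\QQ[\Psi]$ constructed in Theorem \ref{thm:Energy}. For any multiplier vector-field $X$, the divergence identity
\beaa
\D^\mu(\QQ_{\mu\nu}X^\nu) = \tfrac{1}{2}\QQ^{\mu\nu}\piX_{\mu\nu} + \JJ_\nu X^\nu + (\text{terms from }\NN_\phi,\NN_\psi),
\eeaa
integrated over $\DD^{c_0}_{[t_1,t_2]}$ by Stokes' theorem, produces the boundary fluxes on $\Sigma^{c_0}_{t_i}$ and $\NN^{c_0}_{[t_1,t_2]}$ together with a bulk integral in $\piX$. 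I would prove Theorem \ref{MainTheorem} by summing this identity for a carefully chosen family of multipliers with small relative weights and absorbing all cross-terms into the positive part of the resulting bulk.

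The multiplier family would comprise four pieces. First, the Killing field $\T$, for which $\piX=0$ and $\JJ\c\T=0$ by Theorem \ref{thm:Energy}(c); together with the coercivity statements (a) and (d) this yields an almost conserved $\T$-energy, degenerating only at the horizon. Second, a red-shift vector-field $Y$ of Dafermos--Rodnianski type supported near $\HH^+$, whose deformation tensor is positive definite at $r=\r_h$; since $c_0<\r_h$ the horizon flux has the right sign and the $\T$-energy is upgraded to a non-degenerate one. Third, a Morawetz multiplier $X_{\mathrm{mor}}=f(r)\pr_2+h(r)\pr_3$, a small perturbation, for $a\ll M$, of the standard Schwarzschild radial multiplier, chosen so that $\QQ^{\mu\nu}\pi^{(X_{\mathrm{mor}})}_{\mu\nu}$ is coercive in all derivatives but degenerates precisely at the unique axisymmetric trapping radius $r=r^\ast$, reproducing the $(r-r^\ast)^2$ factors in $\BB^{c_0}_\alpha$. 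Fourth, an $r^\alpha$-weighted multiplier $X_{r^\alpha}=\chi_{\geq 4M}(r)\,r^\alpha L$ in the far region, whose deformation tensor produces the positive bulk $r^{\alpha-1}(Lf)^2$ and the boundary flux $r^\alpha e(f)^2$ on $\Sigma^{c_0}_t$, following the $r$-weighted method of \cite{Da-Ro3}.

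Two structural features of the problem are essential for closing the estimate. First, the sum-of-squares form $\TT_{\mu\nu}=E_\mu E_\nu+F_\mu F_\nu+M_\mu M_\nu$ of $\QQ$ (see \eqref{eq:QQexpression}) makes $\JJ\c X$ --- which is quadratic in $\Psi$ with coefficients depending on $(\Phiring,\pr\Phiring)$ --- controllable by the positive bulk, with the smallness of $a/M$ used to absorb the off-diagonal contributions. Second, the axis degeneracy is encoded in the $M_\mu=A^{-1}(\phi\pr_\mu B-\psi\pr_\mu A)$ term via $A\sim(\sin\theta)^2$ near $\AA$; the $\widetilde H^m$ framework of Definition \ref{MainDef} and a standard Hardy inequality convert this into the $\psi^2(\sin\theta)^{-2}$ contributions visible in $e(\psi)$ and $\BB^{c_0}_\alpha$. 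The nonlinear sources $\NN_\phi,\NN_\psi$ are paired, via Cauchy--Schwarz with a small parameter, against the leading flux components $Lf$ and $(M/r)\pr_{2,3}f$, producing exactly the $e(f,\NN_f)$ right-hand side.

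The principal obstacle, and the central novelty compared with \cite{Da-Ro3}, is that the $r^\alpha$-weighted estimate must be closed \emph{simultaneously} with the interior Morawetz estimate on spacelike slices, rather than derived a posteriori on null outgoing slices. The deformation tensor of $X_{r^\alpha}$ generates cross-terms that can only be absorbed using the transverse bounds delivered by $X_{\mathrm{mor}}$, while the coercivity of the flux on $\Sigma^{c_0}_{t_2}$ conversely requires the $r^\alpha$ weight already in the outgoing region. Choosing $f,h$, the cut-offs, and the relative weights of the four multipliers so that every cross-term is jointly absorbable for the full range $\alpha\in(0,2)$ --- uniformly in $a\in[0,\overline{\varep}M]$ and $c_0\in[\r_h-\overline{\varep}M,\r_h]$ --- is the delicate core of the argument.
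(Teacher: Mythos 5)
Your high-level strategy (sum the divergence identity over several multipliers, use positivity of $\QQ$, and combine a Morawetz multiplier near trapping with a red-shift multiplier near the horizon and an $r^\alpha$-weighted outgoing multiplier) is broadly consistent with the paper. However, there are two concrete gaps in the argument that would prevent it from closing.

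First, the source term $\JJ$ is \emph{not} $O(a/M)$, so it cannot simply be absorbed via the smallness of $a$. From \eqref{zq52} the dominant contribution to $X^\nu\JJ_\nu$ is $-2\frac{X^\nu\D_\nu A}{A}M^\mu F_\mu$, and $A^{-1}\D_2A = 2/r + O(aMr^{-3})$ while $M_\mu\approx -\psi A^{-1}\D_\mu A$ are $O(1)$ even at $a=0$ (Schwarzschild). The paper handles this by modifying the current to
$\widetilde{P}_\mu = P_\mu - \frac{X^\nu\D_\nu A}{A}\frac{\D_\mu A}{A}\psi^2$,
whose divergence cancels the leading, $a$-independent part of $X^\nu\JJ_\nu$ and reduces the residual source $L^5$ to genuinely $O(a)$ size (Lemma \ref{clan0}); the price is a new bulk term $L^4 = -2\D^\mu\big[\frac{X^\nu\D_\nu A}{A}\frac{\D_\mu A}{A}\big]\psi^2$, which is again $O(1)$ and sign-indefinite on $[r^\ast,4M]$. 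Controlling $L^4$ requires a dedicated fourth multiplier consisting of a one-form $\widetilde m'_{(4)}$ alone (no vector-field), via the delicate pointwise inequality of Lemma \ref{clan2}. Without the modified current and this additional one-form multiplier, the source term cannot be absorbed into the positive bulk for any choice of your four vector-fields. Related to this, your proposal never introduces the scalar ($w$) and one-form ($m,m'$) components of the MMTT multiplier triples, which are essential both for producing the $\phi^2/r^3,\psi^2/r^3$ bulk terms and for the $L^4$ cancellation.

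Second, taking the Killing field $\T$ with a large constant coefficient $K$ as a separate multiplier is incompatible with the $r^\alpha$-weighted statement. The flux of $K\T$ through $\Sigma^{c_0}_{t_1}$ contains $K\int_{\Sigma_{t_1}}\big[(\partial_2\phi)^2+(\partial_3\phi)^2\big]\,d\mu_{t_1}$ \emph{unweighted}, whereas the right-hand side of \eqref{outs100.9} only allows $\int r^\alpha M^2 r^{-2}\big[(\partial_2\phi)^2+(\partial_3\phi)^2\big]\,d\mu_{t_1}$, which is strictly smaller for $\alpha<2$ as $r\to\infty$. The paper's outgoing multiplier $X_{(3)}=f_3\partial_2+(f_3/(1-p)+g_3)\partial_3$ is precisely designed to avoid this: the $\partial_3$-coefficient $g_3$ is a large constant $\approx C_4$ when $r\leq C_4 M$ (playing the role your $\T$ was meant to play, making boundary and horizon fluxes positive), but then decays like $r^{\alpha-2}$ at infinity so that its flux remains comparable to $\int r^\alpha e^2$. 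Replacing $K\T$ by a decaying $g_3\partial_3$ and tuning $g_3$ jointly with $f_3$ so that the transition region is handled is the nontrivial construction you would still need to supply.
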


The point of proving an energy estimate such as \eqref{outs100.9} involving outgoing energies is that it leads directly to decay estimates. For example, we have the following corrolary:

\begin{corollary}\label{MainCoro}
Assume that $N_1=8$ and $(\phi,\psi)\in C^k([0,T]:\mathbf{H}^{N_1-k}(\Sigma_t^{c_0}))$, $k\in[0,N_1]$, is a solution of the system \eqref{asump0} with $\mathcal{N}_\phi=\mathcal{N}_\psi=0$. Then, for any $t\in[0,T]$ and $\beta<2$,
\begin{equation}\label{Alx30}
\int_{\Sigma_{t}^{c_0}}\big[e(\phi)^2+e(\psi)^2\big]\,d\mu_t\lesssim_{\beta} (1+t/M)^{-\beta}\sum_{k=0}^4M^{2k}\int_{\Sigma_{0}^{c_0}}\frac{r^2}{M^2}\big[e(\T^k\phi)^2+e(\T^k\psi)^2\big]\,d\mu_t.
\end{equation}
\end{corollary}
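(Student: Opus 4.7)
The plan is to combine the $r^p$-type hierarchy of Dafermos--Rodnianski, as encoded in Theorem \ref{MainTheorem}, with commutation by the Killing vector-field $\T = \partial_3$. Because the coefficients $A, B$ in \eqref{asump0} are $\T$-invariant, each $\T^k(\phi,\psi)$ again solves the source-free system, so Theorem \ref{MainTheorem} applies verbatim to any commuted pair, up to the regularity threshold $N_1 = 8$ of the corollary.

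\textbf{Step 1: The hierarchy.} Set
\begin{equation*}
E_\alpha(t) := \int_{\Sigma_t^{c_0}} \frac{r^\alpha}{M^\alpha} \bigl[e(\phi)^2 + e(\psi)^2\bigr]\,d\mu_t, \qquad E_\alpha^{(k)}(t) := E_\alpha(t)\big|_{(\phi,\psi)\mapsto(\T^k\phi,\T^k\psi)}.
\end{equation*}
Comparing term by term, each contribution of the bulk integrand \eqref{outs102.9} carries one additional negative power of $r$ relative to the corresponding contribution of $E_\alpha$; consequently $\mathcal{B}_\alpha^{c_0}(t_1,t_2) \gtrsim M^{-1}\int_{t_1}^{t_2}\widetilde{E}_{\alpha-1}(s)\,ds$, where $\widetilde{E}_{\alpha-1}$ coincides with $E_{\alpha-1}$ except that the $\partial_1$- and $\partial_3$-terms carry an additional factor $(r-r^\ast)^2/r^2$ reflecting the trapping at $r = r^\ast$. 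Theorem \ref{MainTheorem} (with $\mathcal{N}_\phi = \mathcal{N}_\psi = 0$) therefore yields
\begin{equation*}
E_\alpha(t_2) + M^{-1}\int_{t_1}^{t_2}\widetilde{E}_{\alpha-1}(s)\,ds \;\lesssim_\alpha\; E_\alpha(t_1), \qquad \alpha\in(0,2).
\end{equation*}
At $\alpha = 0$, Theorem \ref{thm:Energy} supplies the essentially conserved basic energy, and so $E_0(t_2) \lesssim E_0(t_1)$ uniformly in $t$.

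\textbf{Step 2: Pigeonhole iteration.} Fix $\beta\in(0,2)$ and pick $\alpha\in(\beta,2)$. Applying the hierarchy on $[t/2,t]$ and invoking the mean value theorem produces a time $t^\ast\in[t/2,t]$ with $\widetilde{E}_{\alpha-1}(t^\ast) \lesssim M E_\alpha(0)/t$. Iterating $\lceil\alpha\rceil$ times — each time lowering the weight by one, halving the interval, and using the $\alpha\geq 0$ monotonicity of $E_{\alpha'}$ to propagate the bound back to time $t$ — yields $\widetilde{E}_0(t) \lesssim_\alpha (M/t)^{\alpha} E_\alpha(0)$. The trapping degeneracy that distinguishes $\widetilde{E}_0$ from $E_0$ is then removed by rerunning the same iteration for the commuted pairs $(\T^k\phi,\T^k\psi)$, $k=1,\ldots,4$: near the photon sphere $r = r^\ast$, the missing non-degenerate $\partial_3$-component is literally $\T\phi$, controlled by $E_0^{(1)}$, while the missing $\partial_1$-component is recovered by an elliptic estimate on $\Sigma_t^{c_0}$ after trading $\partial_3^2$-terms via the PDE \eqref{asump0}. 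The use of up to four commutations, combined with the loss of $\lceil\alpha\rceil \leq 2$ orders per iteration step, matches the regularity threshold $N_1 = 8$ assumed in the corollary.

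\textbf{Main obstacle.} The principal technical difficulty is the trapping degeneracy of $\mathcal{B}_\alpha^{c_0}$ at $r = r^\ast$, where the relevant derivative controls vanish to second order; $\T$-commutation and elliptic recovery form the standard workaround, but they must be executed so as to preserve the $r^\alpha$-weights throughout the hierarchy, and the constants $\overline{C}_\alpha$ in Theorem \ref{MainTheorem} must not degenerate too badly as $\alpha\uparrow 2$ — which is precisely why the statement of the corollary requires the strict inequality $\beta < 2$. A secondary, purely bookkeeping issue is checking that the weighted function spaces $\mathbf{H}^m(\Sigma_t^{c_0})$ of Definition \ref{MainDef} behave compatibly with the $\T$-commutation, but this is immediate since $\T = \partial_3$ is transverse to the singular direction $\widetilde{\partial}_1$ encoding the axis vanishing.
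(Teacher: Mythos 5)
Your overall strategy — the $r^\alpha$-hierarchy of Theorem \ref{MainTheorem}, pigeonhole in time, $\T$-commutation, and elliptic recovery near $r = r^\ast$ — is the right blueprint and matches what the paper does. In particular, your observation that the trapping degeneracy can be removed by commuting with $\T$ and inverting the spatial part of \eqref{asump0} corresponds to Lemma \ref{NW0}, which replaces $\mathcal{B}_\alpha^{c_0}$ by the non-degenerate bulk $\mathcal{BB}_\alpha^{c_0}$ at the cost of two extra derivatives, yielding the key inequality \eqref{NW4}.

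However, the pigeonhole iteration in your Step 2 has a gap. Starting from $\alpha$ close to $2$, the first application of the hierarchy gives decay of $I_{\alpha-1,2}(s) \lesssim (M/s)\,I_{\alpha,4}(0)$, with $\alpha - 1$ close to $1$. To iterate, you would next apply the hierarchy at level $\alpha' = \alpha - 1 \in (0,1)$, whose bulk controls the weight-$(\alpha - 2)$ quantity $I_{\alpha - 2,0}$ with $\alpha - 2 < 0$. Pigeonholing then gives $I_{\alpha-2,0}(t) \lesssim (M/t)^2$, \emph{not} the claimed $\widetilde{E}_0(t) \lesssim (M/t)^\alpha\,E_\alpha(0)$. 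The negative-weight quantity $I_{\alpha-2,0}$ does not control $I_{0,0}$ at large $r$; converting back requires an interpolation between the decay of the low-weight energy and the boundedness of the high-weight energy. This is precisely the step the paper carries out in \eqref{NW6}: one splits $\Sigma_s^{c_0}$ into $\{r \leq M+s\}$ and $\{r > M+s\}$, bounding the first piece by a small positive power of $(M+s)/M$ times the decaying low-weight quantity and the second by a negative power of $(M+s)/M$ times the bounded high-weight quantity, so as to upgrade the decay of $I_{1-\delta,2}$ to decay of $I_{1+\eps,2}$ at a slightly slower rate. Only after this upgrade can the hierarchy be applied a second time at an admissible level $1+\eps \in (0,2)$ to reach $I_{\eps,0}(s) \lesssim I_{2,4}(0)\,(M/(M+s))^{2-2\eps}$, which implies \eqref{Alx30}. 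Without this interpolation your iteration does not close; the phrase ``using the $\alpha \geq 0$ monotonicity of $E_{\alpha'}$ to propagate the bound back to time $t$'' does not substitute for it, since monotonicity alone cannot raise the $r$-weight.
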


The point   of the corollary is the almost $(1+t/M)^{-2}$ decay of the outgoing energy on the hypersurface $\Sigma_{t}^{c_0}$, in terms of initial data; such a decay is not possible, of course,  for   the   standard energy. One can further commute the equation with the vector-field $\partial_r$ and use elliptic estimates to prove control decay of higher order outgoing energies as well. Such  estimates, with improved decay,  can then be combined, in principle, with a bootstrap argument to analyze globally the full semilinear system and prove the Partial Stability Conjecture in the case $a\ll M$. Note that the precise form of the system is given in Proposition \ref{equivalence}; the nonlinearities $\mathcal{N}_\phi^\varep$ and $\mathcal{N}_\psi^\varep$ are quadratic and appear to satisfy suitable null conditions which are needed to prove global existence.

The  explicit      loss of derivatives   of the estimate   \eqref{Alx30}  can be improved; however    some loss  is  unavoidable    due  to the degeneracy  of the  bulk integral   at $r=r^*$ in \eqref{outs100.9}.
 We note    that the analogous   decay estimate    for the standard wave equation    in Minkowski space  follows,   with $\b=2$ and  without the  loss of derivatives,   from  the    conservation  of  the conformal energy                (see,   for example,   section 3  in \cite{Kl}).  
 \subsection{Conclusions}   
   The estimates presented in this paper  offer   convincing evidence 
     for  the  validity of our conjecture. Further work is needed  to remove  the smallness  condition for $a/M$, provide  sufficiently  strong pointwise  decay  estimate  in the  wave zone region and implement the standard  approach for   proving  global existence   results  for  nonlinear 
       wave equations which  satisfy the null condition\footnote{ Such a   program   was  carried  out by J.    Luk     (in the simpler case  of the nonlinear stability of the trivial solution),     for  semi-linear  wave    equations verifying the null condition,  see   \cite{Luk}.  }.

\subsection{Organization} The rest of the paper is organized as follows. In section \ref{keyiden} we derive the main identities in the paper, including the precise form of the  system and the divergence identities; this provides an alternative explicit proof of Theorem \ref{thm:Energy}. In section \ref{Outli} we give an outline of the proof of the main theorem in the simplified case \eqref{eq:modelS}. In sections \ref{MoraEst} and \ref{lok} we give a complete proof of the main Theorem \ref{MainTheorem}, first in the case of the pure wave equation on the Schwarzschild space, and then for the full system on the Kerr spaces. In section \ref{CoRoProof} we provide a proof of Corollary \ref{MainCoro}, using Theorem \ref{MainTheorem} and an elliptic estimate. Finally, the appendix contains several explicit calculations in Kerr spaces, some Hardy inequalities, and some properties of the modified Sobolev spaces $\widetilde{H}^m$.

\section{Derivation of the main algebraic identities. Theorem \ref{thm:Energy} revisited}\label{keyiden}

Assume that $(A',B')=(A,B)+(\varepsilon A\phi,\varepsilon A\psi)$ is a solution of the wave-map equation \eqref{zq5} on some interval $I$, where $(\phi,\psi)\in C^k(I:\mathbf{H}^{N_1-k}(\Sigma_t^{c_0}))$, $k=0,\ldots,N_1$. The functions $(\phi,\psi)$ satisfy the system
\begin{equation*}
\begin{split}
&A^2\square\phi+2A\D^\mu B\D_\mu\psi-2\D^\mu B\D_\mu B\phi+2\D^\mu B\D_\mu A\psi\\
&+\varepsilon\big[A\phi\square(A\phi)-\D^\mu(A\phi)\D_\mu(A\phi)+\D^\mu(A\psi)\D_\mu(A\psi)\big]=0,
\end{split}
\end{equation*}
and
\begin{equation*}
\begin{split}
&A^2\square\psi-2A\D^\mu B\D_\mu\phi-(\D^\mu A\D_\mu A+\D^\mu B\D_\mu B)\psi\\
&+\varepsilon\big[A\phi\square(A\psi)-2\D^\mu(A\phi)\D_\mu(A\psi)]=0.
\end{split}
\end{equation*}
Using the formulas \eqref{zq5} these equations become
\begin{equation*}
\begin{split}
&A^2(1+\varepsilon\phi)\square\phi+2A\D^\mu B\D_\mu\psi-2\D^\mu B\D_\mu B\phi+2\D^\mu B\D_\mu A\psi\\
&+\varepsilon\big[A^2\D^\mu\psi\D_\mu\psi+2A\psi\D^\mu A\D_\mu\psi+\D^\mu A\D_\mu A\psi^2-A^2\D^\mu\phi\D_\mu\phi-\D^\mu B\D_\mu B\phi^2\big]=0,
\end{split}
\end{equation*}
and
\begin{equation*}
\begin{split}
&A^2(1+\varepsilon\phi)\square\psi-2A\D^\mu B\D_\mu\phi-(\D^\mu A\D_\mu A+\D^\mu B\D_\mu B)\psi\\
&+\varepsilon\big[-2A^2\D^\mu\phi\D_\mu\psi-\D^\mu A\D_\mu A\phi\psi-\D^\mu B\D_\mu B\phi\psi-2A\psi\D^\mu A\D_\mu\phi]=0.
\end{split}
\end{equation*}
We divide the equations by $A^2(1+\varepsilon\phi)$ to conclude that
\begin{equation}\label{no1}
\begin{split}
&\square\phi+2\frac{\D^\mu B}{A}\D_\mu\psi-2\frac{\D^\mu B\D_\mu B}{A^2}\phi+2\frac{\D^\mu B\D_\mu A}{A^2}\psi=\varepsilon\mathcal{N}_\phi^\varepsilon,\\
&\square\psi-2\frac{\D^\mu B}{A}\D_\mu\phi-\frac{\D^\mu A\D_\mu A+\D^\mu B\D_\mu B}{A^2}\psi=\varepsilon\mathcal{N}_\psi^\varepsilon,
\end{split}
\end{equation}
where
\begin{equation*}
\begin{split}
\mathcal{N}_\phi^\varepsilon&=\frac{A^2\D^\mu\phi\D_\mu\phi-A^2\D^\mu\psi\D_\mu\psi-2A\psi\D^\mu A\D_\mu\psi+\D^\mu B\D_\mu B\phi^2-\D^\mu A\D_\mu A\psi^2}{A^2(1+\varepsilon\phi)}\\
&+\frac{\phi}{A^2(1+\varepsilon\phi)}[2A\D^\mu B\D_\mu\psi-2\D^\mu B\D_\mu B\phi+2\D^\mu B\D_\mu A\psi],
\end{split}
\end{equation*}
and
\begin{equation*}
\begin{split}
\mathcal{N}_\psi^\varepsilon&=\frac{2A^2\D^\mu\phi\D_\mu\psi+(\D^\mu A\D_\mu A+\D^\mu B\D_\mu B)\phi\psi+2A\psi\D^\mu A\D_\mu\phi}{A^2(1+\varepsilon\phi)}\\
&-\frac{\phi}{A^2(1+\varepsilon\phi)}[2A\D^\mu B\D_\mu\phi+(\D^\mu A\D_\mu A+\D^\mu B\D_\mu B)\psi].
\end{split}
\end{equation*}
The formulas for the nonlinear terms $\mathcal{N}_\phi^\varepsilon$ and $\mathcal{N}_\psi^\varepsilon$ can be simplified, and the calculations can be reversed. To summarize, we have proved the following:

\begin{proposition}\label{equivalence}
Assume $I\subseteq\mathbb{R}$ is an interval, $\varepsilon>0$, and $(\phi,\psi)\in C^k(I:\mathbf{H}^{N_1-k}(\Sigma_t^{c_0}))$, $k=0,\ldots,N_1$. Then $(A',B')=(A,B)+(\varepsilon A\phi,\varepsilon A\psi)$ is a solution of the wave-map equation \eqref{zq5} on the interval $I$ if and only if $(\phi,\psi)$ satisfy the nonlinear system 
\begin{equation}\label{cx1.5}
\begin{split}
&\square\phi+2\frac{\D^\mu B}{A}\D_\mu\psi-2\frac{\D^\mu B\D_\mu B}{A^2}\phi+2\frac{\D^\mu B\D_\mu A}{A^2}\psi=\varepsilon\mathcal{N}_\phi^\varepsilon,\\
&\square\psi-2\frac{\D^\mu B}{A}\D_\mu\phi-\frac{\D^\mu A\D_\mu A+\D^\mu B\D_\mu B}{A^2}\psi=\varepsilon\mathcal{N}_\psi^\varepsilon,
\end{split}
\end{equation}
where
\begin{equation}\label{cx2}
\begin{split}
&\mathcal{N}_\phi^\varepsilon=\frac{A^2(\D^\mu\phi\D_\mu\phi-\D^\mu\psi\D_\mu\psi)+(\phi\D^\mu B-\psi\D^\mu A)(2A\D_\mu\psi-\phi\D_\mu B+\psi\D_\mu A)}{A^2(1+\varepsilon\phi)},\\
&\mathcal{N}_\psi^\varepsilon=\frac{2A^2\D^\mu\phi\D_\mu\psi+2A(\psi\D^\mu A-\phi\D^\mu B)\D_\mu\phi}{A^2(1+\varepsilon\phi)}.
\end{split}
\end{equation}
\end{proposition}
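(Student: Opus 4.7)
The proposition is essentially a clean repackaging of the computation carried out in the four displayed equations immediately preceding it, with two additional pieces of work: (i) simplifying the nonlinearities into the compact form \eqref{cx2}, and (ii) establishing the reverse implication. The plan is to handle these in that order.

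For the forward direction, I would substitute $(A',B')=(A,B)+\varepsilon(A\phi,A\psi)$ into \eqref{zq5}, expand using the Leibniz rule $\square(Af)=A\square f+2\D^\mu A\D_\mu f+f\square A$ together with $\D^\mu(Af)\D_\mu(Ag)=A^2\D^\mu f\D_\mu g+Af\D^\mu A\D_\mu g+Ag\D^\mu A\D_\mu f+fg\D^\mu A\D_\mu A$, and use \eqref{zq5} on $(A,B)$ to cancel the $O(\varepsilon^0)$ contribution and (where convenient) to eliminate the background wave operators $\square A$ and $\square B$. Collecting the coefficient of $\square\phi$ (respectively $\square\psi$) on the perturbed side yields $A^2(1+\varepsilon\phi)$; dividing by this factor produces the system in the first-pass form displayed in the paragraphs immediately above the proposition.

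The second step is the algebraic identity showing that this first-pass form of $\mathcal{N}_\phi^\varepsilon$ coincides with the compact expression in \eqref{cx2}. Expanding
\begin{equation*}
(\phi\D^\mu B-\psi\D^\mu A)(2A\D_\mu\psi-\phi\D_\mu B+\psi\D_\mu A)
\end{equation*}
and grouping with the $A^2(\D^\mu\phi\D_\mu\phi-\D^\mu\psi\D_\mu\psi)$ piece reproduces the long expression term by term; a completely analogous and slightly shorter identity involving $2A(\psi\D^\mu A-\phi\D^\mu B)\D_\mu\phi$ handles $\mathcal{N}_\psi^\varepsilon$. The pattern is forced by the ``$A+iB$'' complex structure of the Ernst system, which is the same structure that organizes the Lagrangian $\LL[\Phiring,\Psi]$ used in the informal derivation of Theorem \ref{thm:Energy}; this makes the compact form of \eqref{cx2} essentially inevitable once one groups terms into the natural Ernst-type bilinears.

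For the ``if'' direction there is nothing additional to prove: every manipulation above is an algebraic identity, and the only division performed is by $A^2(1+\varepsilon\phi)$. This factor is positive on $\RR$ away from the axis (the degeneracy on $\AA$ being absorbed into the function space $\widetilde{H}^m$ and the admissibility condition $\psi=0$ on $\AA$) and is nonvanishing in the perturbative regime $|\varepsilon\phi|<1$. Reading the chain of equalities backwards therefore recovers \eqref{zq5} for $(A',B')$. The only real obstacle in carrying out the proof is bookkeeping — keeping the many linear-in-background-times-quadratic-in-perturbation terms sorted by their $\phi$-, $\psi$-, $\D\phi$-, $\D\psi$-content with the correct signs and prefactors — but the symmetric Ernst structure of \eqref{cx2} provides a strong internal consistency check at each stage.
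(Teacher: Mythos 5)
Your proposal matches the paper's proof: substitute the ansatz into \eqref{zq5}, expand via the product and Leibniz rules, use \eqref{zq5} for the background $(A,B)$ to eliminate $\square A$ and $\square B$, divide by $A^2(1+\varepsilon\phi)$, and then observe the algebraic identity putting the nonlinearities into the compact Ernst-bilinear form of \eqref{cx2}, with reversibility following because every step is an identity and the only division is by the nonvanishing factor $A^2(1+\varepsilon\phi)$. The paper carries out the substitution and division explicitly and then remarks that "the formulas for the nonlinear terms can be simplified, and the calculations can be reversed," which is exactly the content of your second and third steps.
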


\subsection{The energy-momentum tensor}\label{EMTensor} We study now solutions of the system 
\begin{equation}\label{cx1}
\begin{split}
\square\phi+2\frac{\D^\mu B}{A}\D_\mu\psi-2\frac{\D^\mu B\D_\mu B}{A^2}\phi+2\frac{\D^\mu B\D_\mu A}{A^2}\psi&=\mathcal{N}_\phi,\\
\square\psi-2\frac{\D^\mu B}{A}\D_\mu\phi-\frac{\D^\mu A\D_\mu A+\D^\mu B\D_\mu B}{A^2}\psi&=\mathcal{N}_\psi.
\end{split}
\end{equation}
Our main goal is to construct a suitable energy-momentum tensor that verifies a good divergence equation. More precisely, let
\begin{equation}\label{zq30}
E_\mu:=\D_\mu\phi+\psi A^{-1}\D_\mu B,\quad F_\mu:=\D_\mu\psi-\phi A^{-1}\D_\mu B,\quad M_\mu:=\frac{\phi\D_\mu B-\psi\D_\mu A}{A}.
\end{equation}
Using the formulas
\begin{equation}\label{zq31}
A\D_\mu\phi=AE_\mu-\psi \D_\mu B,\quad A\D_\mu\psi=AF_\mu+\phi \D_\mu B,
\end{equation}
the identities \eqref{cx1} and \eqref{zq5} show that
\begin{equation}\label{zq32}
\begin{split}
&\D^\mu E_\mu+\frac{\D^\mu BF_\mu}{A}-\frac{\D^\mu BM_\mu}{A}=\mathcal{N}_\phi,\\
&\D^\mu F_\mu-\frac{\D^\mu BE_\mu}{A}+\frac{\D^\mu AM_\mu}{A}=\mathcal{N}_\psi,\\
&\D^\mu M_\mu-\frac{\D^\mu BE_\mu}{A}+\frac{\D^\mu AF_\mu}{A}=0.
\end{split}
\end{equation}
We also calculate
\begin{equation}\label{zq33}
\begin{split}
\D_\mu E_\nu-\D_\nu E_\mu&=\frac{F_\mu\D_\nu B-F_\nu\D_\mu B}{A}+\frac{M_\mu\D_\nu B-M_\nu\D_\mu B}{A},\\
\D_\mu F_\nu-\D_\nu F_\mu&=-\frac{E_\mu\D_\nu B-E_\nu\D_\mu B}{A}-\frac{M_\mu\D_\nu A-M_\nu\D_\mu A}{A},\\
\D_\mu M_\nu-\D_\nu M_\mu&=\frac{E_\mu\D_\nu B-E_\nu\D_\mu B}{A}-\frac{F_\mu\D_\nu A-F_\nu\D_\mu A}{A}.\\
\end{split}
\end{equation}

Let
\begin{equation}\label{zq35}
\begin{split}
&T_{\mu\nu}:=E_\mu E_\nu+F_\mu F_\nu+M_\mu M_\nu,\\
&Q_{\mu\nu}:=T_{\mu\nu}+\g_{\mu\nu}\mathcal{L},\\
&\mathcal{L}:=-(1/2)\g^{\al\be}T_{\al\be}=-(1/2)(E_\al E^\al+F_\al F^\al+M_\al M^\al).
\end{split}
\end{equation}
We calculate the divergence
\begin{equation*}
\begin{split}
\D^\mu Q_{\mu\nu}&=E_\nu\D^\mu E_\mu+E^\mu(\D_\mu E_\nu-\D_\nu E_\mu)\\
&+F_\nu\D^\mu F_\mu+F^\mu(\D_\mu F_\nu-\D_\nu F_\mu)\\
&+M_\nu\D^\mu M_\mu+M^\mu(\D_\mu M_\nu-\D_\nu M_\mu),
\end{split}
\end{equation*}
Using \eqref{zq32} and \eqref{zq33} we calculate
\begin{equation*}
\begin{split}
E_\nu\D^\mu E_\mu+E^\mu(\D_\mu E_\nu-\D_\nu E_\mu)&=\frac{E_\nu(\D^\mu BM_\mu-\D^\mu BF_\mu)-F_\nu E^\mu\D_\mu B-M_\nu E^\mu\D_\mu B}{A}\\
&+\frac{\D_\nu B(E^\mu F_\mu+E^\mu M_\mu)}{A}+\mathcal{N}_\phi E_\nu,
\end{split}
\end{equation*}
\begin{equation*}
\begin{split}
F_\nu\D^\mu F_\mu+F^\mu(\D_\mu F_\nu-\D_\nu F_\mu)&=\frac{E_\nu F^\mu\D_\mu B+F_\nu(\D^\mu BE_\mu-\D^\mu AM_\mu)+M_\nu F^\mu\D_\mu A }{A}\\
&+\frac{-\D_\nu B E^\mu F_\mu-\D_\nu AF^\mu M_\mu}{A}+\mathcal{N}_\psi F_\nu,
\end{split}
\end{equation*}
and
\begin{equation*}
\begin{split}
M_\nu\D^\mu M_\mu+M^\mu(\D_\mu M_\nu-\D_\nu M_\mu)&=\frac{-E_\nu M^\mu\D_\mu B+F_\nu M^\mu\D_\mu A +M_\nu(\D^\mu BE_\mu-\D^\mu AF_\mu) }{A}\\
&+\frac{\D_\nu BM^\mu E_\mu-\D_\nu AM^\mu F_\mu}{A}.
\end{split}
\end{equation*}
Therefore
\begin{equation}\label{zq36}
\D^\mu Q_{\mu\nu}=\frac{2\D_\nu BM^\mu E_\mu-2\D_\nu AM^\mu F_\mu}{A}+\mathcal{N}_\phi E_\nu+\mathcal{N}_\psi F_\nu.
\end{equation}

\subsection{     Divergence  Identities }
Given a vector-field $X$, a function $w$, and $1$-forms $m,m'$ we define the form
\begin{equation}\label{zq40}
P_\mu=P_\mu[X,w,m,m']:=Q_{\mu\nu}X^\nu+\frac{1}{2}w(\phi E_\mu+\psi F_\mu)-\frac{1}{4}\D_\mu w(\phi^2+\psi^2)+\frac{1}{4}(m_\mu\phi^2+m'_\mu\psi^2). 
\end{equation}
Then, using \eqref{zq30}--\eqref{zq32} we calculate the divergence
\begin{equation*}
\begin{split}
\D^\mu P_\mu&=X^\nu \JJ_\nu+\frac{1}{2}Q_{\mu\nu}{}^{(X)}\pi^{\mu\nu}+\frac{1}{2}\D^\mu w(\phi E_\mu+\psi F_\mu)+\frac{1}{2}w(\D^\mu\phi E_\mu+\D^\mu\psi F_\mu)\\
&+\frac{1}{2}w(\phi \D^\mu E_\mu+\psi \D^\mu F_\mu)-\frac{1}{4}\square w(\phi^2+\psi^2)-\frac{1}{2}\D^\mu w(\phi\D_\mu\phi+\psi\D_\mu\psi)\\
&+\frac{1}{4}(\phi^2\D^\mu m_\mu+\psi^2\D^\mu m'_\mu)+\frac{1}{2}(\phi m^\mu\D_\mu\phi+\psi{m'}^\mu\D_\mu\psi)\\
&=X^\nu \JJ_\nu+\frac{1}{2}Q_{\mu\nu}{}^{(X)}\pi^{\mu\nu}-\frac{1}{4}\square w(\phi^2+\psi^2)\\
&+\frac{1}{4}(\phi^2\D^\mu m_\mu+\psi^2\D^\mu m'_\mu)+\frac{1}{2}(\phi m^\mu\D_\mu\phi+\psi{m'}^\mu\D_\mu\psi)+E',
\end{split}
\end{equation*}
where
\begin{equation*}
\begin{split}
E'&=\frac{1}{2}\D^\mu w(\phi E_\mu+\psi F_\mu-\phi\D_\mu\phi-\psi\D_\mu\psi)+\frac{1}{2}w(\D^\mu\phi E_\mu+\D^\mu\psi F_\mu+\phi \D^\mu E_\mu+\psi \D^\mu F_\mu)\\
&=0+\frac{1}{2}w(E^\mu E_\mu +F^\mu F_\mu +M^\mu M_\mu+\phi\mathcal{N}_\phi+\psi\mathcal{N}_\psi).
\end{split}
\end{equation*}
Therefore
\begin{equation*}\label{zq41}
\begin{split}
\D^\mu P_\mu&=X^\nu \JJ_\nu+\frac{1}{2}Q_{\mu\nu}{}^{(X)}\pi^{\mu\nu}-\frac{1}{4}\square w(\phi^2+\psi^2)-w\mathcal{L}\\
&+\frac{1}{4}(\phi^2\D^\mu m_\mu+\psi^2\D^\mu m'_\mu)+\frac{1}{2}(\phi m^\mu\D_\mu\phi+\psi{m'}^\mu\D_\mu\psi)+\frac{1}{2}w(\phi\mathcal{N}_\phi+\psi\mathcal{N}_\psi).
\end{split}
\end{equation*}
\subsection{Summary. }
We summarize  the results  of the section  in  the following:

\begin{proposition}\label{prop001}
(i) Assume that $(\phi,\psi)\in C^k(I:\mathbf{H}^{N_0-k}(\Sigma_t^{c_0}))$, $k=0,\ldots,N_0$ satisfy the system \eqref{cx1}. Let
\begin{equation}\label{zq51}
\begin{split}
&E_\mu:=\D_\mu\phi+\psi A^{-1}\D_\mu B,\qquad F_\mu:=\D_\mu\psi-\phi A^{-1}\D_\mu B,\qquad M_\mu:=\frac{\phi\D_\mu B-\psi\D_\mu A}{A},\\
&Q_{\mu\nu}:=E_\mu E_\nu+F_\mu F_\nu+M_\mu M_\nu+\g_{\mu\nu}\mathcal{L},\\
&\mathcal{L}:=-\frac{1}{2}(E_\al E^\al+F_\al F^\al+M_\al M^\al).
\end{split}
\end{equation}
Then
\begin{equation}\label{zq52}
\D^\mu Q_{\mu\nu}=:J_\nu=\frac{2\D_\nu BM^\mu E_\mu-2\D_\nu AM^\mu F_\mu}{A}+\mathcal{N}_\phi E_\nu+\mathcal{N}_\psi F_\nu.
\end{equation}

(ii) Let 
\begin{equation}\label{zq51.5}
P_\mu=P_\mu[X,w,m,m']:=Q_{\mu\nu}X^\nu+\frac{1}{2}w(\phi E_\mu+\psi F_\mu)-\frac{1}{4}\D_\mu w(\phi^2+\psi^2)+\frac{1}{4}(m_\mu\phi^2+m'_\mu\psi^2),
\end{equation}
where $X$ is a smooth vector-field, $w$ is a smooth function, and $m,m'$ are smooth $1$-forms. Then 
\begin{equation}\label{zq52.5}
\begin{split}
2\D^\mu P_\mu&=2X^\nu J_\nu+Q_{\mu\nu}{}^{(X)}\pi^{\mu\nu}-2w\mathcal{L}+(\phi m^\mu\D_\mu\phi+\psi{m'}^\mu\D_\mu\psi)\\
&+\frac{1}{2}\phi^2(\D^\mu m_\mu-\square w)+\frac{1}{2}\psi^2(\D^\mu m'_\mu-\square w)+w(\phi\mathcal{N}_\phi+\psi\mathcal{N}_\psi).
\end{split}
\end{equation}
\end{proposition}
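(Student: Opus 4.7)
For part (i), I would first record three pairs of preliminary identities for the one-forms $E_\mu,F_\mu,M_\mu$: the divergence equations of \eqref{zq32}, obtained by substituting \eqref{zq31} into the linearized system \eqref{cx1} and using the background wave-map equations \eqref{zq5} to rewrite $A\square A$ and $A\square B$; and the ``curl'' identities of \eqref{zq33}, which follow from the symmetry $\D_\mu\D_\nu=\D_\nu\D_\mu$ on scalars together with Leibniz applied to the correction terms $\psi A^{-1}\D_\mu B$, $-\phi A^{-1}\D_\mu B$, and $(\phi\D_\mu B-\psi\D_\mu A)/A$. With these in hand, I would expand
\[
\D^\mu Q_{\mu\nu}=\sum_{\#\in\{E,F,M\}}\bigl\{\#_\nu\,\D^\mu\#_\mu+\#^\mu(\D_\mu\#_\nu-\D_\nu\#_\mu)+\#^\mu\D_\nu\#_\mu\bigr\}+\D_\nu\mathcal{L},
\]
noting that the third batch exactly cancels $\D_\nu\mathcal{L}$ by definition of $\mathcal{L}$. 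Substituting \eqref{zq32}--\eqref{zq33} into the remaining pieces, the many $A^{-1}$-weighted cubic terms group into antisymmetric pairs of the form $E^\mu F_\mu\D_\nu B-F^\mu E_\mu\D_\nu B$ which vanish, leaving precisely the $(2\D_\nu B\,M^\mu E_\mu-2\D_\nu A\,M^\mu F_\mu)/A$ term and the source contributions $\mathcal{N}_\phi E_\nu+\mathcal{N}_\psi F_\nu$ of \eqref{zq52}.

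For part (ii), I would differentiate \eqref{zq51.5} term by term. The first piece $\D^\mu(Q_{\mu\nu}X^\nu)$ yields $X^\nu J_\nu+\tfrac12 Q_{\mu\nu}\,{}^{(X)}\pi^{\mu\nu}$ by symmetry of $Q$ and part (i), and the last piece $\D^\mu[\tfrac14(m_\mu\phi^2+m'_\mu\psi^2)]$ expands routinely by Leibniz into the two final terms of \eqref{zq52.5}. The middle pieces are where cancellation happens, and the key structural identity driving it is
\[
\phi E_\mu+\psi F_\mu=\phi\D_\mu\phi+\psi\D_\mu\psi,
\]
which is immediate from \eqref{zq30} and which forces all contributions involving $\D^\mu w$ to cancel. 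The surviving $\tfrac12 w(\D^\mu\phi\,E_\mu+\D^\mu\psi\,F_\mu+\phi\D^\mu E_\mu+\psi\D^\mu F_\mu)$ term I would rewrite using \eqref{zq31} and \eqref{zq32}; here a second round of cancellation occurs, in which the $A^{-1}\D^\mu B(\phi F_\mu-\psi E_\mu)$ pieces coming from $\D^\mu\phi,\D^\mu\psi$ cancel precisely against the corresponding pieces from $\phi\D^\mu E_\mu,\psi\D^\mu F_\mu$, and the remaining $A^{-1}M^\mu(\phi\D_\mu B-\psi\D_\mu A)$ collapses to $M^\mu M_\mu$ by definition of $M$. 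The result is $-w\mathcal{L}+\tfrac12 w(\phi\mathcal{N}_\phi+\psi\mathcal{N}_\psi)$, which completes \eqref{zq52.5}.

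The main obstacle is purely combinatorial bookkeeping: one must track the sizable number of $A^{-1}$-weighted cubic terms arising in part (i) and the second-round cancellations in part (ii). Conceptually there is no difficulty, since the whole construction is dictated by the Lagrangian interpretation sketched in the informal proof of Theorem \ref{thm:Energy}: $Q_{\mu\nu}$ is the energy--momentum tensor of the quadratic Lagrangian obtained from the Taylor expansion of the nonlinear wave-map Lagrangian at $\Phiring$, and the source $J_\nu$ is the precise Noether-type manifestation of the explicit dependence of $\mathcal{L}$ on the background fields $(A,B)$. The only algebraic input beyond direct Leibniz expansion is the structural identity $\phi E+\psi F=\phi\,d\phi+\psi\,d\psi$, reflecting that the ``connection one-form'' $A^{-1}\,dB$ rotates $(\phi,\psi)$ as a complex scalar without changing its Euclidean norm---a hallmark of the underlying $\HHH^2$ target geometry.
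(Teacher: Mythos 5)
Your proposal is correct and follows essentially the same route as the paper: establish \eqref{zq32}--\eqref{zq33}, expand $\D^\mu Q_{\mu\nu}$ so the $\D_\nu\mathcal{L}$ contribution cancels the symmetric $\#^\mu\D_\nu\#_\mu$ pieces, and substitute, then for (ii) use the structural identity $\phi E_\mu+\psi F_\mu=\phi\D_\mu\phi+\psi\D_\mu\psi$ together with the second-round cancellation of the $A^{-1}\D^\mu B(\phi F_\mu-\psi E_\mu)$ terms, leaving $-w\mathcal{L}+\tfrac{1}{2}w(\phi\mathcal{N}_\phi+\psi\mathcal{N}_\psi)$ as the surviving $w$-contribution. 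This is precisely the computation carried out in Section \ref{keyiden}, including the intermediate quantity the paper calls $E'$.
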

Note that  theorem  \ref{thm:Energy}  is an immediate consequence of the  first part of the proposition. Indeed, assuming
 that $(\NN_\phi, \NN_\psi)=0$   it   is immediate that   $J$ is orthogonal to $\T$.  The positivity
  of the energy momentum tensor $\QQ$  is an immediate consequence of its form  \eqref{zq51}.

\section{Main ideas in the proof of Theorem  \ref{MainTheorem}  }\label{Outli} In this section we      provide  main ideas  and   motivation  for    the various choices  we need to make    in the proof of theorem \ref{MainTheorem}.     Our proof follows   the  well established   pattern 
of proving         integrated   local energy decay estimates on black holes,   such as Schwarzschild,    for which the ergoregion  is trivial  and   the   trapped  region       is contained  to a level surface     $r=r^*>\rH$.  It is quite fortunate that      our   axially   symmetric  linearized system can be treated in the same manner.     Though  our treatment    follows  the    clear  and    efficient   approach   of    \cite{MaMeTaTo},      we    should  point out that many of the   ideas go back    to    other  authors   such as  \cite{B-S3}, \cite{B-St},\cite{DaRo1}.   An essential ingredient in the proof  is   to take into account the red shift      effects of the   horizon,  idea which goes back to  \cite{DaRo1}.
 
             In our problem we need to make two important modifications.  Most importantly, to get any estimate at all,  we  need to account for the     source term $\JJ$. This requires, in particular, a   serious  modification of the     current  $P_\mu$ in \eqref{zq52.5},   modification  which adds considerably to the complexity of the proof. 
             
  The second important modification  has to do with  the   presence of weights in   our main estimate.  Typically,   integrated    decay estimates    are  designed to deal with  the  region close to the  black hole,    most importantly the trapping region. They are then complemented  by   weighted estimates   in the asymptotic  region.  Thus, for example,    J. Luk (see  \cite{Luk}),    relies  on an    integrated local   decay estimate (proved    earlier  by Dafermos-Rodnianski (see \cite{DaRo2})     for  small  $a/M$), which he combines with   weighted     estimates   in the asymptotic   region  based on a straightforward adaptation of the  classical   conformal method.   The use of  conformal  method, however,
 is  quite  awkward    in the   black hole  region,    because  the  weights  involved  in  the conformal method  lead   to errors  which  grow   linearly    in  $t$.    This    problem  was  later        fixed by a different method of Dafermos-Rodnianski in  \cite{Da-Ro3},  who   replace  the conformal method by  $r$-weighted estimates.  The new method   allows one to prove decay estimate for the     energy associated   to       hypersurfaces  which are spacelike   near the black hole region but become     null   in the asymptotic region.  This depends, however, on having first derived an integrated  local decay estimate\footnote{The $r$-weighted estimates    produce  boundary terms    which    are estimated   with the help of  the integrated decay estimate. Because  of the degenerate  nature of this latter,    the method leads   to   an overall a loss of derivatives.}.  
     In our work here   we refine the analysis significantly by deriving   $r$-weighted  estimates for  the 
 \textit{outgoing     energy}  across spacelike hypersurfaces,     \textit{simultaneously}  with the integrated     local  decay  estimates.
 
 \subsection{Outline of the proof}\label{outline} We discuss now the main ideas in the proof. For simplicity, we consider only the equation for $\psi$ in the Schwarzschild case $a=0$, which carries most of the  conceptual difficulties of the problem. In this case $B=0$, $A=r^2(\sin\theta)^2$, and the equation is
\begin{equation}\label{OU1}
\square\psi-\frac{4-8(M/r)(\sin\theta)^2}{r^2(\sin\theta)^2}\psi=0.
\end{equation}
As in \eqref{prop001} we define
\begin{equation}\label{OU2}
\begin{split}
F_\mu:=\D_\mu\psi,\quad M_\mu:=\frac{-\psi\D_\mu A}{A},\quad Q_{\mu\nu}:=F_\mu F_\nu+M_\mu M_\nu-\frac{1}{2}\g_{\mu\nu}(F_\al F^\al+M_\al M^\al).
\end{split}
\end{equation}
For suitable triplets $(X,w,m')$ we define 
\begin{equation}\label{OU3}
\widetilde{P}_\mu=\widetilde{P}_\mu[X,w,m']:=Q_{\mu\nu}X^\nu+\frac{w}{2}\psi F_\mu-\frac{\psi^2}{4}\D_\mu w+\frac{\psi^2}{4}m'_\mu-\frac{X^\nu\D_\nu A}{A}\frac{\D_\mu A}{A}\psi^2.
\end{equation}
Notice the correction $-\frac{X^\nu\D_\nu A}{A}\frac{\D_\mu A}{A}\psi^2$, compared to the definition of $P$ in \eqref{zq51.5}, which is needed to partially compensate for the source term $J$. Then we have the divergence identity
\begin{equation}\label{OU4}
2\D^\mu \widetilde{P}_\mu=\sum_{j=1}^5 L^j,
\end{equation}
where
\begin{equation}\label{OU5}
\begin{split}
&L^1=L^1[X,w,m']:=Q_{\mu\nu}{}^{(X)}\pi^{\mu\nu}+w(F_\alpha F^\alpha+M_\alpha M^\alpha),\\
&L^2=L^2[X,w,m']:=\psi{m'}^\mu\D_\mu\psi,\\
&L^3=L^3[X,w,m']:=\frac{1}{2}\psi^2(\D^\mu m'_\mu-\square w),\\
&L^4=L^4[X,w,m']:=-2\D^\mu\Big[\frac{X^\nu\D_\nu A}{A}\frac{\D_\mu A}{A}\Big]\psi^2.
\end{split}
\end{equation}

The divergence identity gives
\begin{equation}\label{OU7}
\int_{\Sigma_{t_1}^c} \widetilde{P}_\mu n_0^\mu\,d\mu_{t_1}=\int_{\Sigma_{t_2}^c}\widetilde{P}_\mu n_0^\mu\,d\mu_{t_2}+\int_{\mathcal{N}^c_{[t_1,t_2]}}\widetilde{P}_\mu k_0^\mu\,d\mu_c+\int_{\mathcal{D}^c_{[t_1,t_2]}}\D^\mu \widetilde{P}_\mu\,d\mu,
\end{equation}
where $t_1,t_2\in[0,T]$, $c\in(c_0,2M]$, $n_0:=n/|\g^{33}|^{1/2}$, $k_0:=k/|\g^{22}|^{1/2}$, and the integration is with respect to the natural measures induced by the metric $\g$. To prove the main theorem we  need  to choose a  suitable multiplier triplet  $(X,w,m')$ in a such a way that all the terms in the identity above are nonnegative. This is the {\it{method of simultaneous inequalities}} of Marzuola--Metcalfe--Tataru--Tohaneanu \cite{MaMeTaTo}.

To accomplish our task we need    to superimpose four different choices of multiplier triplets $(X, w, m')$,   denoted $(X_{(k)},w_{(k)},m'_{(k)} )$, $k\in\{1,2,3,4\}$.  The first multiplier ($ k=1$) is important in a neighborhood of the trapped set $\{r=3M\}$; the second multiplier ($k=2$)  is important in a neighborhood of the horizon $\{r=\rH\}$; the third multiplier ($k=3$) is important at infinity, in the construction of outgoing energies at infinity; the fourth multiplier is important to control the term $L^4$, which is connected to the presence of the nontrivial potential in \eqref{OU1}. 
  
\subsubsection{The multipliers $(X_{(1)},w_{(1)},m'_{(1)} )$ and $(X_{(2)},w_{(2)},m'_{(2)} )$.} The first two multipliers are similar to the multipliers used in \cite{MaMeTaTo} in the case of the homogeneous wave equation. Set
\begin{equation*}
\begin{split}
&X_{(1)}:=f_1(r)\partial_2+g_1(r)\partial_3,\qquad f_1(r):=\frac{a_1(r)\Delta}{r^2},\qquad g_1(r):=\frac{a_1(r)\chi(r)2M}{r}+1,\\
&w_{(1)}(r,\theta):=f'_1(r)+f_1(r)\partial_r\log\big(r^4/\Delta)-\eps_1\widetilde{w}(r),\\
&\widetilde{w}(r):=M^2(r-33M/16)^3(r-r^\ast)^2r^{-8}\mathbf{1}_{[33M/16,\infty)}(r),\\
&m'_{(1)}:=0,\\
\end{split}
\end{equation*}
where $r^\ast=3M$, $\eps_1\in(0,1]$ is a small constant, and $a_1:(0,\infty)\to\mathbb{R}$ is a smooth function. The important function $a_1$, which vanishes on the trapped region $\{r=r^\ast\}$, is defined by
\begin{equation*}
\begin{split}
&R(r):=(r-r^\ast)(r+2M)+6M^2\log\Big(\frac{r-2M}{r^\ast-2M}\Big),\\
&a_1(r):=r^{-2}\delta^{-1}\kappa(\delta R(r))+\Big[\frac{r^\ast-2M}{r}-\frac{6M^2}{r^2}\log\Big(\frac{r-\r_h}{r^\ast-\r_h}\Big)\Big]\chi_{\geq DM}(r),
\end{split}
\end{equation*}
where $D$ is a sufficiently large constant, $\delta=\eps_2^2M^{-2}$ for a small positive constant $\eps_2$, and $\kappa:\mathbb{R}\to\mathbb{R}$ is an increasing smooth function satisfying $\kappa(y)=y$ on $[-1,\infty)$ and $\kappa(y)=-2$ on $(-\infty,-3]$. This is essentially the choice of \cite{MaMeTaTo}, except for the correction at infinity, containing the cutoff function $\chi_{\geq DM}$; this correction is needed in order to match properly with the third multiplier at infinity to produce outgoing energies.

In a small neighborhood of the horizon we need to use the redshift effect. We define the second multiplier
\begin{equation*}
\begin{split}
&X_{(2)}:=f_2(r)\partial_2+g_2(r)\partial_3,\qquad f_2(r):=-\eps_2a_2(r),\qquad g_2(r):=\eps_2 a_2(r)(1-\eps_2),\\
&w_{(2)}(r):=-2\eps_2 a_2(r)/r,\qquad m'_{(2)2}=m'_{(2)3}:=\eps_2M^{-2}\gamma(r),\qquad m'_{(2)1}=m'_{(2)4}:=0,
\end{split}
\end{equation*}
where
\begin{equation*}
a_2(r):=
\begin{cases}
M^{-3}(9M/4-r)^3\qquad&\text{ if }r\leq 9M/4,\\
0\qquad&\text{ if }r\geq 9M/4,
\end{cases}
\end{equation*}
and $\gamma:[c_0,\infty)\to[0,1]$ is a function supported in $[c_0,17M/8]$, satisfying $\gamma(2M)=1/2$ and the more technical property \eqref{other2}. As in \cite{MaMeTaTo}, the multipliers $(X_{(1)},w_{(1)},m'_{(1)} )$ and $(X_{(2)},w_{(2)},m'_{(2)} )$ cooperate well  to generate  mostly positive bulk contributions. More precisely, the constants $\eps_1,  \eps_2$ can be chosen such that, for some absolute constant $\eps_3>0$,
\begin{equation}\label{OU10}
\begin{split}
\sum_{j=1}^4\big(L^j_{(1)}+L^j_{(2)}\big)&\geq \eps_3\sum_{Y\in\{F,M\}}\Big[\frac{(r-r^\ast)^2}{r^3}(Y_1/r)^2+\frac{M^2}{r^3}(Y_2)^2+\frac{M^2(r-r^\ast)^2}{r^5}(Y_3)^2\Big]\\
&+\eps_3\frac{M}{r^4}\psi^2-\eps_3^{-1}\frac{M}{r^4}\mathbf{1}_{[DM,\infty)}(r)\psi^2+\widetilde{L},
\end{split}
\end{equation}
where 
\begin{equation}\label{OU11}
\begin{split}
\widetilde{L}:=&\frac{8\Delta(r^2-4Mr)}{r^7}a_1(r)\psi^2+(1-2C_1\eps_1)\mathbf{1}_{[r^\ast,\infty)}(r)\Big\{\frac{M}{r^4}\Big(7-\frac{44M}{r}+\frac{72M^2}{r^2}\Big)\psi^2\\
&+\frac{8a_1(r)(r-r^\ast)}{r^4}\frac{(\cos\theta)^2}{(\sin\theta)^2}\psi^2+\frac{2a_1(r)(r-r^\ast)}{r^4}(F_1)^2+2a'_1(r)\frac{\Delta^2}{r^4}(F_2)^2\Big\}.
\end{split}
\end{equation}
Moreover, letting $\widetilde{P}_{(j)}:=\widetilde{P}_\mu[X_{(j)},w_{(j)},m'_{(j)}]$, $j=1,2$, we have
\begin{equation}\label{OU12}
2(\widetilde{P}_{(1)\mu}+\widetilde{P}_{(2)\mu}) k^\mu\geq\eps_3\sum_{Y\in\{F,M\}}\big[(Y_1/r)^2+(Y_2)^2(2-c/M)\big]+\eps_3M^{-2}\psi^2-\eps_3^{-1}(F_3)^2,
\end{equation}
along $\mathcal{N}^c_{[t_1,t_2]}$. Also, with $p=2M/r$,
\begin{equation}\label{OU13}
\begin{split}
2(\widetilde{P}_{(1)\mu}+\widetilde{P}_{(2)\mu}) n^\mu&\geq -\eps_3^{-1}\big\{\widetilde{e}_0+\mathbf{1}_{[8M,2DM]}(r)(F_3)^2\big\}\\
&-\frac{\chi_{\geq 8M}(r)(1-p)}{r^2}\partial_2(r\psi^2)+\eps_3(F_2)^2\mathbf{1}_{(c_0,17M/8]}(r),
\end{split}
\end{equation}
and
\begin{equation}\label{OU14}
\begin{split}
2(\widetilde{P}_{(1)\mu}+\widetilde{P}_{(2)\mu}) n^\mu&\leq \eps_3^{-1}\big\{\widetilde{e}_0+\mathbf{1}_{[8M,2DM]}(r)(F_3)^2\big\}\\
&-\frac{\chi_{\geq 8M}(r)(1-p)}{r^2}\partial_2(r\psi^2)+\eps_3^{-1}(F_2)^2\mathbf{1}_{(c_0,17M/8]}(r),\\
\end{split}
\end{equation}
where
\begin{equation*}
\begin{split}
\widetilde{e}_0&=\frac{(F_1)^2+(M_1)^2}{r^2}+(L\psi)^2+\frac{M^2|r-2M|}{r^3}(F_2)^2+\frac{M^2}{r^2}(F_3)^2+\frac{1}{r^2}\psi^2.
\end{split}
\end{equation*}
Notice that the bulk terms in \eqref{OU10} are mostly positive, with the exception of the term $\widetilde{L}$. The terms along $\mathcal{N}^c_{[t_1,t_2]}$ are also mostly positive. On the other hand, the bounds \eqref{OU13} and \eqref{OU14} we have so far on the integrals along the hypersurfaces $\Sigma_t^{c}$ are very weak; these bounds will be improved by choosing a suitable multiplier $(X_{(3)},w_{(3)},m'_{(3)})$ at infinity.

\subsubsection{The multiplier $(X_{(4)},w_{(4)},m'_{(4)} )$.} Our next goal is to control the term $\widetilde{L}$ in \eqref{OU11}. This is a new term, when compared to solutions of the homogeneous wave equation, connected to the nontrivial potential in \eqref{OU1} and the bulk term $L^4$ in \eqref{OU5}. Since $a'_1(r)\geq 0$ and $a_1(r)(r-r^\ast)\geq 0$, this term can only be problematic in the region $\{r\in[r^\ast,4M]\}$. We define
\begin{equation*}
\begin{split}
&X_{(4)}:=0,\qquad w_{(4)}:=0,\\
&\widetilde{m}'_{(4)1}(r,\theta):=-(1-2C_1\eps_1)
\frac{8(r-r^\ast)a_1(r)\chi_{\leq 6R}(r)}{r^2}\frac{\cos\theta}{\sin\theta}\mathbf{1}_{[r^\ast,\infty)}(r),\\
&\widetilde{m}'_{(4)2}(r):=(1-2C_1\eps_1)\frac{2b(r)}{\Delta},\qquad \widetilde{m}'_{(4)3}:=0,\qquad\widetilde{m}'_{(4)4}:=0,
\end{split}
\end{equation*}
for a suitable function $b$ supported in $[r^\ast,4M]$. Careful estimates, as in Lemma \ref{clan2}, and completion of squares show that one can choose the function $b$ in such a way that
\begin{equation}\label{OU20}
L^1_{(4)}=L^4_{(4)}=0,\qquad \widetilde{L}+L^2_{(4)}+L^3_{(4)}\geq -C_2|2M-c_0|r^{-4}\psi^2
\end{equation}
for some constant $C_2$ sufficiently large, and 
\begin{equation}\label{OU21}
\big|2\widetilde{P}_{(4)\mu}n^\mu\big|\lesssim \eps_3^{-1}\psi^2/r^2\qquad \text{ and }\qquad 2\widetilde{P}_{(4)\mu}k^\mu=0\,\,\text{ along }\,\,\mathcal{N}^c_{[t_1,t_2]}.
\end{equation}
These two bounds can be combined with \eqref{OU10}--\eqref{OU14} to effectively remove the contribution of the term $\widetilde{L}$.

 \subsubsection{The multiplier $(X_{(3)},w_{(3)},m'_{(3)} )$.} Finally, we are ready to define the multiplier at infinity and close the estimate. First of all, to obtain any simultaneous estimate at all, we need to make sure that the contributions of the integrals of $2\widetilde{P}_\mu n^\mu$ on the hypersurfaces $\Sigma_t^c$ are positive. So far, these integrals are far from positive, in view of the estimates \eqref{OU13}, \eqref{OU14}, and \eqref{OU21}. 
 
 The formula \eqref{Qy1} shows that
 \begin{equation*}
 2n^\mu\widetilde{P}_\mu[K\partial_3,0,0]=2n^\mu Q_{\mu\nu}(K\partial_3)^\nu=K\sum_{Y\in\{F,M\}}\big[\g^{11}(Y_1)^2+\g^{22}(Y_2)^2+(-\g^{33})(Y_3)^2\big].
 \end{equation*}
Therefore, one could make the integrals of $2\widetilde{P}_\mu n^\mu$ along the hypersurfaces $\Sigma_t^c$ positive by adding a multiplier of the form $(K\partial_3,0,0)$, for some positive constant $K$ sufficiently large, and using a Hardy estimate to control the integral of the $0$'s order term in terms of the first order terms. Notice that such a multiplier does not affect the bulk integrals. This is precisely the argument used in \cite{MaMeTaTo} to close the simultaneous estimate for the standard energy for the wave equation.

In our case, however, we are looking to prove stronger estimates involving outgoing energies. A multiplier of the form $(K\partial_3,0,0)$ is not allowed, since this would create contributions at infinity of the form  $(F_2)^2+(F_3)^2$, which are unacceptable in view of the definition \eqref{outs101.9}. Instead, we choose the last multiplier of the form
\begin{equation}\label{OU26}
\begin{split}
&X_{(3)}:=f_3\partial_2+\Big(\frac{f_3}{1-p}+g_3\Big)\partial_3,\qquad w_{(3)}:=\frac{2f_3}{r},\\
&m'_{(3)1}:=m'_{(3)4}:=0,\qquad m'_{(3)2}:=\frac{2h_3}{r(1-p)},\qquad m'_{(3)3}:=-\frac{2h_3}{r},
\end{split}
\end{equation} 
for some suitable functions $f_3,g_3,h_3$. The function $f_3$ should behave like  $(r/M)^\a$ for large $r$, in order to produce the desired power in the outgoing energy. To make sure that it does not interfere with the crucial  trapping region we  have to  choose it to vanish for  $r\leq 8M$. 
 The role of the function $g_3$ is to match, to some extent, the role played by the multiplier $K\T$ in the  boundary estimate discussed earlier. Thus we choose $g_3$  to be a very large constant when $r\leq C_4M$, for some large constant $C_4$,   but we choose it to decay as $r\to \infty$, at the rate $r^{\alpha-2}$,
 such that it does not interfere with the outgoing energy. Precise choices are provided in \eqref{outs40.7}--\eqref{outs51.7},
\begin{equation*}
f_3(r):=\eps_4\chi_{\geq 8M}(r)e^{\beta(r)},\qquad g_3(r):=\int_{r}^\infty \Big[\rho(s)+\frac{\eps_4M^2}{s^3}f_3(s)\Big]\,ds,
\end{equation*}
where 
\begin{equation*}
\beta(8M):=0,\qquad \beta'(r):=\Big(\frac{4M}{r^2}+\frac{1}{r}\Big)\big(1-\chi_{\geq C_4^4M}(r)\big)+\frac{\alpha}{r}\chi_{\geq C_4^4M}(r),
\end{equation*}
and
\begin{equation*}
\rho(r):=\delta M^{-1}\Big[\chi_{\geq C_4M}(r)+\chi_{\geq 4C_4^4M}(r)\Big(C_4^7e^{\beta(r)}\frac{M^3}{r^3}-1\Big)\Big].
\end{equation*}
The constants $\eps_4,C_4$ satisfy $\eps_4=\eps_3^2$ and $C_4\geq \eps_4^{-4}\alpha^{-1}(2-\alpha)^{-1}$, while $\delta\in[10^{-4}C_4^{-3},10^4C_4^{-3}]$ is such that $\int_{C_4M}^\infty\rho(s)\,ds=C_4$.
 
The function $h_3$ can be chosen explicitly in terms of $f_3$ and $g_3$, in such a way to complete squares and create positive $0$'s order contributions. The positivity of the  bulk terms in \eqref{OU10} and \eqref{OU20}, together with the choice $\eps_4\ll\eps_3$, is used to show positivity of the total bulk contribution in the transition region. Overall, we derive the desired lower bound on the bulk term,
\begin{equation}\label{OU30}
\begin{split}
\sum_{j=1}^4\big(L^j_{(1)}&+L^j_{(2)}+L^j_{(4)}+L^j_{(3)}\big)\gtrsim_\alpha e^{\beta}\Big\{\frac{(r-r^\ast)^2}{r^2}\frac{(\partial_1\psi)^2+(\psi/\sin\theta)^2}{r^3}\\
&+\frac{M^2}{r^3}(\partial_2\psi)^2+\frac{M^2(r-r^\ast)^2}{r^5}(\partial_3\phi)^2+\frac{\psi^2}{r^3}+\frac{(L\psi)^2}{r}\Big\}.
\end{split}
\end{equation}

At the same time one can estimate precisely the size of the term $2\widetilde{P}_{(3)\mu}n^\mu$ at infinity, and use positivity of the function $g_3$ in the transition region to absorb the contributions of the other terms $2\widetilde{P}_{(j)\mu}n^\mu$, $j\in\{1,2,4\}$. Overall, we find that
\begin{equation}\label{OU31}
\begin{split}
\int_{\Sigma_{t}^c}2\big[\widetilde{P}_{(1)\mu}+\widetilde{P}_{(2)\mu}+\widetilde{P}_{(3)\mu}+\widetilde{P}_{(4)\mu} \big]n_0^\mu\,d\mu_{t}\approx_{\alpha} \int_{\Sigma_{t}^c}&e^\beta\big[e(\phi)^2+e(\psi)^2\big]\,d\mu_t.
\end{split}
\end{equation}
Finally we find that
\begin{equation}\label{OU32}
2\big[\widetilde{P}_{(1)\mu}+\widetilde{P}_{(2)\mu}+\widetilde{P}_{(3)\mu}+\widetilde{P}_{(4)\mu}\big]k^\mu\geq 0\qquad \text{ along }\mathcal{N}^c_{[0,T]}.
\end{equation}
The theorem follows from \eqref{OU30}--\eqref{OU32}, and the divergence identity \eqref{OU7}.

\section{The wave equation in the Schwarzschild spacetime}\label{MoraEst}

We show first how to prove Theorem \ref{MainTheorem} in the simplest case: $a=0$ (the Schwarzschild spacetime) and $\psi=0$. 
In this case $B=0$ and we are simply considering $\Z$-invariant solutions of the wave equation
\begin{equation*}
\square\phi=0.
\end{equation*}

In the rest of this section we use the coordinates $(\theta,r,u_+,\phi_+)$ and the induced vector-fields $\partial_1=\partial_\theta,\,\partial_2=\partial_r,\,\partial_3=\partial_t,\,\partial_4=\partial_{\phi}$, see \eqref{cha3}--\eqref{zq3}. For simplicity of notation, we identify functions that depend on $r$ (or on some of the other variables) with the corresponding functions defined on the spacetime. 

Notice that  
\begin{equation}\label{mora1}
q^2=r^2,\qquad p=\frac{2M}{r},
\end{equation}
with $p$ introduced  in \eqref{eq:useful-id}.
The nontrivial components of the metric are
\begin{equation}\label{mora2}
\begin{split}
&\g^{11}=r^{-2},\\
&\g^{22}=1-p,\\
&\g^{23}=p\chi,\\
&\g^{33}=\frac{-1+p^2\chi^2}{1-p},\\
&\g^{44}=\frac{1}{r^2(\sin\theta)^2}.
\end{split}
\end{equation}
Given a function $H$ that depends only on $r$, the formula \eqref{waveop} shows that
\begin{equation}\label{waveop2}
\square H=\g^{22}\partial_2^2H+D^2\partial_2H=\frac{r-2M}{r}\partial_2^2H+\frac{2r-2M}{r^2}\partial_2H.
\end{equation}
Similarly, if $m$ is a 1-form with $m_4=0,\mathcal{L}_\T m=0,\mathcal{L}_\Z m=0$, then
\begin{equation}\label{waveop3}
\begin{split}
&\D^\mu m_\mu=\g^{\al\be}\partial_\al m_\be+\big[\partial_\mu\g^{\mu\nu}+(1/2)\g^{\mu\nu}\partial_\mu\log|r^4(\sin\theta)^2|\big]m_\nu\\
&=\frac{1}{r^2}\Big[\partial_1 m_1+\frac{\cos\theta}{\sin\theta}m_1\Big]+\Big[(1-p)\partial_2m_2+\frac{2r-2M}{r^2}m_2\Big]+p\Big[\chi\partial_2m_3+(\chi'+\chi/r)m_3\Big].
\end{split}
\end{equation}

Therefore, given a vector-field 
\begin{equation}\label{mora3}
X=f(r)\partial_2+g(r)\partial_3,
\end{equation}
as in \eqref{kra7.1}, and a $1$-form $Y$ with $Y_4=0$, and letting
\begin{equation*}
{}^{(Y)}Q_{\mu\nu}=Y_\mu Y_\nu-(1/2)\g_{\mu\nu}(Y_\rho Y^\rho),
\end{equation*}
we have, see \eqref{kra7.4}--\eqref{Qy2},
\begin{equation}\label{mora4}
\begin{split}
{}^{(Y)}Q_{\mu\nu}{}^{(X)}\pi^{\mu\nu}&=(Y_1)^2\frac{-f'(r)}{r^2}+
(Y_2)^2\frac{-f(r)(2r-2M)+f'(r)(r^2-2Mr)}{r^2}\\
&+(Y_3)^2\Big[-f(r)\partial_2\g^{33}+2g'(r)\g^{23}-f'(r)\g^{33}-\frac{2rf(r)\g^{33}}{r^2}\Big]\\
&+2Y_2Y_3\frac{-2Mrf(r)\chi'(r)-2Mf(r)\chi(r)+g'(r)(r^2-2Mr)}{r^2},
\end{split}
\end{equation}
\begin{equation}\label{mora5}
\begin{split}
2{}^{(Y)}Q(n,X)&=(Y_1)^2\frac{g(r)}{r^2}+(Y_2)^2[g(r)(1-p)-2f(r)\g^{23}]\\
&+(Y_3)^2[-g(r)\g^{33}]+2Y_2Y_3[-f(r)\g^{33}],
\end{split}
\end{equation}
and
\begin{equation}\label{mora6}
\begin{split}
2{}^{(Y)}Q(k,X)&=(Y_1)^2\frac{-f(r)}{r^2}+(Y_2)^2[f(r)(1-p)]\\
&+(Y_3)^2[-f(r)\g^{33}+2g(r)\g^{23}]+2Y_2Y_3[g(r)(1-p)].
\end{split}
\end{equation}
Here $f'$ and $g'$ denote the derivatives with respect to $r$ of the functions $f$ and $g$, and
\begin{equation}\label{mora6.5}
n=-\g^{3\mu}\partial_\mu=-\g^{23}\partial_2-\g^{33}\partial_3,\qquad k=\g^{2\mu}\partial_\mu=(1-p)\partial_2+\g^{23}\partial_3.
\end{equation}

In this section we prove the following:

\begin{theorem}\label{MainTheorem3}
Assume that $M\in (0,\infty)$, $N_0=4$, $a=0$ and $c_0:=2M-\overline{\varep}M$, where $\overline{\varep}\in [0,1)$ is a sufficiently small constant. Assume that $T\geq 0$, and $\phi\in C^k([0,T]:H^{N_0-k}(\Sigma_t^{c_0}))$, $k\in[0,N_0]$, is a $\Z$-invariant real-valued solution of the wave equation
\begin{equation}\label{mora10}
\square\phi=0.
\end{equation}
Then, for any $\alpha\in(0,2)$ and any $t_1\leq t_2\in[0,T]$,
\begin{equation}\label{outs100}
\mathcal{E}^{c_0}_\alpha(t_2)+\mathcal{B}_\alpha^{c_0}(t_1,t_2)\leq \overline{C}_\alpha\mathcal{E}^{c_0}_\alpha(t_1),
\end{equation}
where $\overline{C}_\alpha$ is a large constant that may depend on $\alpha$, 
\begin{equation}\label{outs4}
E_\mu:=\D_\mu\phi,\qquad L\phi:=\chi_{\geq 4M}(r)\Big(\partial_2+\frac{1}{1-p}\partial_3\Big)\phi=\chi_{\geq 4M}(r)\Big(E_2+\frac{1}{1-p}E_3\Big),
\end{equation}
\begin{equation}\label{outs101}
\mathcal{E}^{c_0}_\alpha(t):=\int_{\Sigma_t^{c_0}}\frac{r^\alpha}{M^{\alpha}}\Big[(E_1/r)^2+(L\phi)^2+M^2r^{-2}\big[(E_2)^2+(E_3^2)\big]+r^{-2}\phi^2\Big]\,d\mu_t,
\end{equation}
\begin{equation}\label{outs102}
\begin{split}
\mathcal{B}_\alpha^{c_0}(t_1,t_2):=\int_{\mathcal{D}^{c_0}_{[t_1,t_2]}}\frac{r^\alpha}{M^{\alpha}}\Big\{&\frac{(r-3M)^2}{r^3}\frac{(E_1)^2}{r^2}+\frac{1}{r}(L\phi)^2\\
&+\frac{1}{r^3}\phi^2+\frac{M^{2}}{r^{3}}\Big[(E_2)^2+\frac{(r-3M)^2}{r^2}(E_3)^2\Big]\Big\}\,d\mu.
\end{split}
\end{equation}
\end{theorem}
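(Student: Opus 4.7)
The plan is to apply the vector-field multiplier method with $r^\alpha$ weights in the form already sketched in Section \ref{outline}, specializing the general framework of Proposition \ref{prop001} to the simpler setting $B=0$, $M_\mu=0$, $\NN_\phi=0$. In this case the relevant current reduces to $\widetilde P_\mu[X,w,m]=Q_{\mu\nu}X^\nu+(w/2)\phi E_\mu-(\phi^2/4)\D_\mu w+(\phi^2/4)m_\mu$ and satisfies a divergence identity $2\D^\mu\widetilde P_\mu=L^1+L^2+L^3$ with $L^j$ as in \eqref{OU5}; the correction term $L^4$ from \eqref{OU5} is absent because the potential in \eqref{OU1} is absent. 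Integrating this identity over $\mathcal{D}^{c_0}_{[t_1,t_2]}$ and applying the divergence theorem, the task is reduced to exhibiting a triplet $(X,w,m)$ for which (i) the bulk integrand dominates pointwise the integrand of $\mathcal{B}_\alpha^{c_0}(t_1,t_2)$, (ii) the boundary integrand on $\Sigma^{c_0}_{t}$ is comparable to the $r^\alpha$-weighted outgoing energy density appearing in $\mathcal{E}^{c_0}_\alpha(t)$, and (iii) the boundary integrand on the inner timelike-or-null hypersurface $\NN^{c_0}_{[t_1,t_2]}$ has a favorable sign.

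Following the method of simultaneous inequalities of Marzuola--Metcalfe--Tataru--Tohaneanu, I would superimpose three triplets. The first, $(X_{(1)},w_{(1)},0)$ with $X_{(1)}=f_1\pr_2+g_1\pr_3$ built from the function $a_1(r)$ vanishing at the photon sphere $r^\ast=3M$ (modified at infinity by the factor $\chi_{\geq DM}$ to match the third multiplier), produces the $(r-3M)^2$-degenerate bulk responsible for the first and last terms of $\mathcal{B}^{c_0}_\alpha$. The second, $(X_{(2)},w_{(2)},m'_{(2)})$ supported in a neighborhood of the horizon with $a_2(r)=M^{-3}(9M/4-r)^3_+$, is the Dafermos--Rodnianski red-shift multiplier; the positive surface gravity turns this into a non-degenerate positive bulk near $r=\rH$ and, through the $1$-form $m'_{(2)}$, controls the $\phi^2$-term all the way down to $r=c_0$. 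As in \cite{MaMeTaTo}, these two triplets cooperate to yield analogs of \eqref{OU10}--\eqref{OU14} in the present pure-wave setting: the bulk is essentially non-negative, but the boundary integrands on $\Sigma^{c_0}_t$ have the wrong sign in some regions and, crucially, carry no $r^\alpha$-growth at infinity.

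The new and decisive step is the construction at infinity of the third triplet $(X_{(3)},w_{(3)},m'_{(3)})$, of the shape \eqref{OU26} with $f_3(r)=\eps_4\chi_{\geq 8M}(r)e^{\beta(r)}$ and $\beta'(r)\sim\alpha/r$ for large $r$. Its role is two-fold. First, because $X_{(3)}$ is proportional to the outgoing null direction $\pr_2+(1-p)^{-1}\pr_3$, the computation \eqref{mora4} gives the $f_3'(L\phi)^2/r\sim r^{\alpha-1}(L\phi)^2$ bulk term and, via \eqref{mora5}, an $\Si^{c_0}_t$ boundary integrand comparable to the $r^\alpha$-weighted outgoing energy density of \eqref{outs101}. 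Second, the positive function $g_3$, chosen to be a large constant $C_4$ throughout $\{r\le C_4 M\}$ and to decay like $r^{\alpha-2}$ at infinity, plays the role of the classical $K\T$ energy multiplier on spacelike slices: it makes $2\widetilde P_{(3)\mu}n^\mu$ pointwise positive and large enough to absorb the negative boundary contributions of $\widetilde P_{(1)}$ and $\widetilde P_{(2)}$, while decaying fast enough at infinity to remain subordinate to the $f_3$-generated outgoing energy density. The form $m'_{(3)}$, chosen by completion of squares from $f_3$ and $g_3$, produces the positive $\phi^2/r^3$ zeroth-order bulk and the $\phi^2/r^2$ boundary term. A Hardy inequality on $\Sigma^{c_0}_t$ (from the appendix) is used to compare the zeroth-order boundary integrand with first-order quantities where direct positivity fails.

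The hard part will be the coordination of the three triplets across the transition region $r\in[8M,DM]$, where $\chi_{\geq DM}$ turns on in $a_1$, $f_3$ turns on via $\chi_{\geq 8M}$, and $g_3$ begins its decay. The small parameters $\eps_1,\eps_2,\eps_4$ must be fixed in a hierarchy $\eps_4\ll \eps_3\ll \min(\eps_1,\eps_2)$ (with $\eps_3$ the absolute positivity constant produced by multipliers $(1)$ and $(2)$), and the constant $C_4$ taken large enough, $C_4\geq \eps_4^{-4}\alpha^{-1}(2-\alpha)^{-1}$, so that small negative cross-terms from $(3)$ in the region where $f_3$ is still small are swallowed by the strict positivity of multipliers $(1)$ and $(2)$, and so that the boundary positivity of $g_3\pr_3$ dominates on every $\Si^{c_0}_t$. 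Once the choices of \eqref{OU26} and \eqref{OU3} are in place, the pointwise bulk bound is analogous to \eqref{OU30} (with $\psi$ replaced by $\phi$), the boundary equivalence is analogous to \eqref{OU31}, the inner-boundary sign is analogous to \eqref{OU32} (since $\NN^{c_0}_{[t_1,t_2]}$ is spacelike for $c_0<\rH$), and the estimate \eqref{outs100} follows from the divergence identity \eqref{OU7} applied to $\widetilde P_{(1)}+\widetilde P_{(2)}+\widetilde P_{(3)}$, discarding the non-negative boundary integral on $\NN^{c_0}_{[t_1,t_2]}$.
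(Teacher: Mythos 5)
Your proposal matches the paper's proof in essence: the same reduction of Proposition \ref{prop001} to $B=0$, $M_\mu=0$, the same three-multiplier decomposition (trapping multiplier built from $a_1$ vanishing at $r=3M$ with the $\chi_{\geq DM}$ correction, Dafermos--Rodnianski red-shift multiplier near the horizon, and the outgoing null multiplier with $f_3\sim \eps_4 e^{\beta}$ and the large auxiliary $g_3$), the same hierarchy $\eps_4\ll\eps_3\ll\min(\eps_1,\eps_2)$ with $C_4\gtrsim \eps_4^{-4}\alpha^{-1}(2-\alpha)^{-1}$, the same Hardy-inequality absorption of zeroth-order boundary terms, and the same conclusion via the divergence identity. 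This is precisely the route taken in Section \ref{MoraEst} (Lemmas \ref{flux} and \ref{BigBounds}).
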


The rest of the section is concerned with the proof of Theorem \ref{MainTheorem3}. Let
\begin{equation}\label{mora11}
Q_{\mu\nu}:=E_\mu E_\nu-(1/2)\g_{\mu\nu}(E_\rho E^\rho),\qquad J_\nu:=\D^\mu Q_{\mu\nu}=\mathcal{N}E_\nu.
\end{equation}
For any vector-field $X$, real scalar function $w$, and $1$-form $m$ we define
\begin{equation}\label{mora12}
P_\mu=P_\mu[X,w,m]:=Q_{\mu\nu}X^\nu+\frac{1}{2}w\phi E_\mu-\frac{1}{4}\phi^2\D_\mu w+\frac{1}{4}m_\mu\phi^2.
\end{equation} 
The formula \eqref{zq52.5} becomes
\begin{equation}\label{mora13}
2\D^\mu P_\mu=\mathcal{T}[X,w,m]:={}^{(X)}\pi^{\mu\nu}Q_{\mu\nu}
+wE^\mu E_\mu+\phi m^\mu E_\mu+\frac{1}{2}\phi^2\big(\D^\mu m_\mu-\square w\big).
\end{equation}

We use the divergence identity
\begin{equation}\label{mora14}
\int_{\Sigma_{t_1}^c}P_\mu n_0^\mu\,d\mu_{t_1}=\int_{\Sigma_{t_2}^c}P_\mu n_0^\mu\,d\mu_{t_2}+\int_{\mathcal{N}^c_{[t_1,t_2]}}P_\mu k_0^\mu\,d\mu_c+\int_{\mathcal{D}^c_{[t_1,t_2]}}\D^\mu P_\mu\,d\mu,
\end{equation}
where $t_1,t_2\in[0,T]$, $c\in(c_0,2M]$, $n_0:=n/|\g^{33}|^{1/2}$, $k_0:=k/|\g^{22}|^{1/2}$, and the integration is with respect to the natural measures induced by the metric $\g$. We would like to find multipliers $(X,w,m)$ in such a way that the contributions of the integrals in \eqref{mora14} are all nonnegative.

\subsection{The multipliers $(X_{(k)},w_{(k)},m_{(k)})$, $k\in\{1,2\}$}\label{multipliers} In this subsection we define three multipliers $(X_{(k)},w_{(k)},m_{(k)})$, $k\in\{1,2,3\}$, which are used to generate positive terms in the divergence identity \eqref{mora14}. The first multiplier $(X_{(1)},w_{(1)},m_{(1)})$ is relevant in a neighborhood of the trapped set $\{r=3M\}$ and the second multiplier $(X_{(2)},w_{(2)},m_{(2)})$ is relevant in a neighborhood of the horizon $\{r=2M\}$. The third multiplier $(X_{(3)},w_{(3)},m_{(3)})$ generates outgoing energies at infinity; at the same time it contains a large multiple of the vector-field $\partial_3$ which helps with the positivity of the boundary integrals $P_\mu n^\mu$.

\subsubsection{Analysis around the trapped set $r=3M$} This is similar to the construction in \cite{MaMeTaTo}. We define the first multiplier $(X_{(1)},w_{(1)},m_{(1)})$ by the formulas
\begin{equation}\label{moro6}
\begin{split}
&X_{(1)}:=f_1(r)\partial_2+g_1(r)\partial_3,\qquad f_1(r):=\frac{a_1(r)\Delta}{r^2},\qquad g_1(r):=\frac{a_1(r)\chi(r)2M}{r}+1,\\
&w_{(1)}(r):=f'_1(r)+f_1(r)\partial_r\log\big(r^4/\Delta)-\eps_1\widetilde{w}(r),\qquad m_{(1)}\equiv 0,\\
&\widetilde{w}(r):=M^2(r-33M/16)^3(r-3M)^2r^{-8}\mathbf{1}_{[33M/16,\infty)}(r),
\end{split}
\end{equation}
where $a_1:(0,\infty)\to\mathbb{R}$ is a smooth increasing function to be fixed, $\lim_{r\to\infty}a_1(r)=1$, and $\eps_1\in(0,1]$ is a small constant. Using \eqref{mora4},
\begin{equation*}
Q_{\mu\nu}{}^{(X_{(1)})}\pi^{\mu\nu}+w_{(1)}E_\mu E^\mu=\big[K_{(1)}^{11}(E_1)^2+
K_{(1)}^{22}(E_2)^2+K_{(1)}^{33}(E_3)^2+2K_{(1)}^{23}E_2E_3\big],
\end{equation*}
where
\begin{equation*}
\begin{split}
K_{(1)}^{11}&=\frac{-f'_1(r)}{r^2}+w_{(1)}(r)\g^{11}=\frac{2(r-3M)}{r^4}a_1-\eps_1\widetilde{w}\g^{11},\\
K_{(1)}^{22}&=\frac{-f_1(r)(2r-2M)+f'_1(r)\Delta}{r^2}+w_{(1)}(r)\g^{22}=\frac{2\Delta^2}{r^4}a'_1-\eps_1\widetilde{w}\g^{22},\\
K_{(1)}^{33}&=-f_1(r)\partial_2\g^{33}+2g'_1(r)\g^{23}-f'_1(r)\g^{33}-\frac{2rf_1(r)\g^{33}}{r^2}+w_{(1)}(r)\g^{33}=\frac{8M^2\chi^2}{r^2}a'_1-\eps_1\widetilde{w}\g^{33},\\
K_{(1)}^{23}&=\frac{-2Mrf_1(r)\chi'(r)-2Mf_1(r)\chi(r)+g'_1(r)\Delta}{r^2}+w_{(1)}(r)\g^{23}=\frac{4M\Delta\chi}{r^3}a'_1-\eps_1\widetilde{w}\g^{23}.
\end{split}
\end{equation*}
where $a'_1$ denotes the $r$ derivative of the function $a_1$. Therefore
\begin{equation}\label{moro10}
\begin{split}
Q_{\mu\nu}{}^{(X_{(1)})}\pi^{\mu\nu}+w_{(1)}E_\mu E^\mu&=\frac{2(r-3M)a_1-\eps_1\widetilde{w}r^2}{r^4}(E_1)^2\\
&+\Big(2a'_1-\frac{\eps_1\widetilde{w}}{1-p}\Big)\Big(\frac{\Delta}{r^2}E_2+\frac{2M\chi}{r}E_3\Big)^2+\frac{\eps_1\widetilde{w}}{1-p}(E_3)^2.
\end{split}
\end{equation}
Moreover
\begin{equation}\label{moro11}
\phi m^\mu_{(1)} E_\mu+\frac{1}{2}\phi^2\big(\D^\mu m_{(1)\mu}-\square w_{(1)}\big)=-\frac{1}{2}\square w_{(1)}\phi^2.
\end{equation}

We define now the important function $a_1(r)$. Assume $\kappa:\mathbb{R}\to\mathbb{R}$ is an increasing smooth function satisfying $\kappa(y)=y$ on $[-1,\infty)$ and $\kappa(y)=-2$ on $(-\infty,-3]$. We set
\begin{equation}\label{moro35}
\begin{split}
&R(r):=(r-3M)(r+2M)+6M^2\log\Big[\frac{r-2M}{M}\Big],\\
&a_1(r):=r^{-2}\delta^{-1}\kappa(\delta R(r))+\Big[\frac{M}{r}-\frac{6M^2}{r^2}\log\Big(\frac{r-2M}{M}\Big)\Big]\chi_{\geq DM}(r),
\end{split}
\end{equation}
where $\delta:=\eps_2^2M^{-2}$ is a small constant and $D\gg 1$ is a large constant. The function $a_1$ is well defined, using the formula above, for $r>2M$. Clearly $a_1(r)=-2r^{-2}\delta^{-1}$ for $r$ sufficiently close to $2M$. Therefore $a_1$ can be extended smoothly by this formula to the full interval $r\in(c_0,\infty)$.

Clearly
\begin{equation}\label{moro35.15}
R'(r)=2r-M+\frac{6M^2}{r-2M}.
\end{equation}
The function $R$ is increasing on $(2M,\infty)$. Let $r_\delta$ denote the unique number in $(2M,\infty)$ with the property that $R(r_\delta)=-1/\delta$, and notice that
\begin{equation*}
\frac{r_\delta-2M}{M}\approx e^{-(6\delta M^2)^{-1}}.
\end{equation*}

Clearly $a_1(3M)=0$,
\begin{equation}\label{moro35.1}
a'_1(r)=r^{-2}\Big[R'(r)\kappa'(\delta R(r))-\frac{2\kappa(\delta R(r))}{\delta r}\Big]
\end{equation}
if $r\leq DM$, and 
\begin{equation}\label{moro35.10}
\begin{split}
a'_1(r)&=\frac{12M^2}{r^3}+\Big[\frac{M}{r}-\frac{6M^2}{r^2}\log\Big(\frac{r-2M}{M}\Big)\Big]\chi'_{\geq DM}(r)\\
&+\Big[\frac{M}{r^2}-\frac{12M^2}{r^3}\log\Big(\frac{r-2M}{M}\Big)+\frac{6M^2}{r^2(r-2M)}\Big](1-\chi_{\geq DM}(r))
\end{split}
\end{equation}
if $r\geq r_\delta$. 
In view of \eqref{moro35.1}, if $r\in (c_0,r_\delta]$ then $a'_1(r)\geq 2\delta^{-1}r^{-3}$. On the other hand, if $r\in [r_\delta,\infty)$ then $a'_1(r)\geq 12M^2r^{-3}$.
Therefore
\begin{equation}\label{moro35.11}
a_1(3M)=0\qquad\text{ and }\qquad a'_1(r)\geq 12 M^2r^{-3}\qquad\text{ for }r\in (c_0,\infty),
\end{equation}
provided that $\delta\leq (10M)^{-2}$.

Let
\begin{equation}\label{moro35.12}
h_1(r):=f'_1(r)+f_1(r)\partial_r\log\big(r^4/\Delta)=\frac{r-2M}{r^3}\partial_r\big(r^2a_1(r)\big).
\end{equation}
We calculate, as before,
\begin{equation}\label{moro37.1}
h_1(r)=\frac{r-2M}{r^3}R'(r)\kappa'(\delta R(r))
\end{equation}
if $r\leq DM$, and 
\begin{equation}\label{moro37.10}
\begin{split}
h_1(r)&=\frac{r-2M}{r^3}\Big\{2r-\Big[M-\frac{6M^2}{r-2M}\Big](1-\chi_{\geq DM}(r))+\Big[Mr-6M^2\log\Big(\frac{r-2M}{M}\Big)\Big]\chi'_{\geq DM}(r)\Big\}
\end{split}
\end{equation}
if $r\geq r_\delta$. Letting 
\begin{equation*}
\widetilde{R}(r):=\frac{r-2M}{r^3} R'(r)=\frac{2}{r}-\frac{5M}{r^2}+\frac{8M^2}{r^3},
\end{equation*}
we have
\begin{equation*}
\begin{split}
(\square h_1)(r)&=\frac{\partial_2(\Delta\cdot\partial_2h_1)}{r^2}\\
&=r^{-2}\Big\{\kappa'(\delta R(r))\partial_r[\Delta\widetilde{R}'(r)]+\delta^2\kappa'''(\delta R(r))r^7\widetilde{R}(r)^3(r-2M)^{-1}\\
&+\delta\kappa''(\delta R(r))[3r^4\widetilde{R}(r)\widetilde{R}'(r)+4r^3\widetilde{R}(r)^2]\Big\}
\end{split}
\end{equation*}
if $r\leq DM$, and
\begin{equation*}
\begin{split}
(\square h_1)(r)&=\frac{\partial_2(\Delta\cdot\partial_2h_1)}{r^2}\\
&=r^{-2}\partial_r[\Delta\widetilde{R}'(r)]+O(Mr^{-4})\mathbf{1}_{[DM,\infty)}(r)\\
&=-\frac{2M}{r^4}\Big(7-\frac{44M}{r}+\frac{72M^2}{r^2}\Big)+O(Mr^{-4})\mathbf{1}_{[DM,\infty)}(r).
\end{split}
\end{equation*}
if $r\geq r_\delta$. Therefore, the last two identities show that
\begin{equation}\label{moro31.7}
\begin{split}
(\square h_1)(r)&=-\frac{2M}{r^4}\Big(7-\frac{44M}{r}+\frac{72M^2}{r^2}\Big)+O(Mr^{-4})\mathbf{1}_{[DM,\infty)}(r)\\
&+M^{-3}O(1)\mathbf{1}_{(c_0,r_\delta]}(r)+O\Big(\frac{\delta^2M^2}{r-2M}\Big)\mathbf{1}_{[r'_\delta,r_\delta]}(r),
\end{split}
\end{equation}
where $r'_\delta$ denotes the unique number in $(2M,\infty)$ with the property that $R(r'_\delta)=-2/\delta$. Notice that
\begin{equation}\label{moro30.2}
7-\frac{44M}{r}+\frac{72 M^2}{r^2}\geq 1/10\qquad\text{ for any }r\geq M.
\end{equation}
Therefore, since $w_{(1)}=h_1-\eps_1\widetilde{w}$, it follows that
\begin{equation}\label{moro30.3}
-\frac{1}{2}(\square w_{(1)})(r)\geq \frac{M}{10r^4}-\frac{C_1M}{r^4}\mathbf{1}_{[DM,\infty)}(r)-\frac{C_1}{M^3}\mathbf{1}_{(c_0,r_\delta]}(r)-\frac{C_1\delta^2M^2}{r-2M}\mathbf{1}_{[r'_\delta,r_\delta]}(r),
\end{equation}
for a sufficiently large constant $C_1$, provided that the constant $\eps_1$ is sufficiently small. Using also \eqref{moro10}--\eqref{moro11} and \eqref{moro35.11},
\begin{equation}\label{moro31}
\begin{split}
\mathcal{T}[X_{(1)},w_{(1)},m_{(1)}]&\geq \frac{(2-C_1\eps_1)(r-3M)a_1(r)}{r^4}(E_1)^2\\
&+(2-C_1\eps_1)a'_1(r)\big((1-p)E_2+p\chi(r)E_3\big)^2+\eps_1\widetilde{w}(r)(E_3)^2+\frac{M}{10r^4}\phi^2\\
&-\frac{C_1M}{r^4}\mathbf{1}_{[DM,\infty)}(r)\phi^2-\frac{C_1}{M^3}\mathbf{1}_{(c_0,r_\delta]}(r)\phi^2-\frac{C_1\delta^2M^2}{r-2M}\mathbf{1}_{[r'_\delta,r_\delta]}(r)\phi^2,
\end{split}
\end{equation}
for a sufficiently large constant $C_1$, provided that the constant $\eps_1$ is sufficiently small.

The remaining contributions $2P_\mu n_0^\mu$ and $2P_\mu k_0^\mu$ in the divergence identity \eqref{mora14} cannot be 
estimated effectively at this time. We will prove partial estimates for these terms in Lemma \ref{flux} below, after we construct 
the second multiplier $(X_{(2)},w_{(2)},m_{(2)})$ and show how to fix some of the parameters.

\subsubsection{Analysis in a neighborhood of the horizon} In a small neighborhood of the horizon we need to use the redshift effect. For this we define the second multiplier $(X_{(2)},w_{(2)},m_{(2)})$ by the formulas
\begin{equation}\label{moro12}
\begin{split}
&X_{(2)}:=f_2(r)\partial_2+g_2(r)\partial_3,\qquad f_2(r):=-\eps_2a_2(r),\qquad g_2(r):=\eps_2 a_2(r)(1-\eps_2),\\
&w_{(2)}(r):=-2\eps_2 a_2(r)/r,\qquad m_{(2)2}:=\eps_2M^{-2}\gamma(r),\qquad m_{(2)3}:=\eps_2M^{-2}\gamma(r)
\end{split}
\end{equation}
where $\eps_2$ is a small positive constant (recall that $\delta=\eps_2^2M^{-2}$),
\begin{equation}\label{moro12.5}
a_2(r):=
\begin{cases}
M^{-3}(9M/4-r)^3\qquad&\text{ if }r\leq 9M/4,\\
0\qquad&\text{ if }r\geq 9M/4,
\end{cases}
\end{equation}
and $\gamma:[c_0,\infty)\to[0,1]$ is a suitable function (to be fixed later) supported in $[c_0,17M/8]$ and satisfying $\gamma(2M)=1/2$.

Notice that $\chi=1$ in the support of the functions $a_2$ and $\gamma$. As before, we calculate
\begin{equation*}
Q_{\mu\nu}{}^{(X_{(2)})}\pi^{\mu\nu}+w_{(2)}E_\mu E^\mu=\big[K_{(2)}^{11}(E_1)^2+
K_{(2)}^{22}(E_2)^2+K_{(2)}^{33}(E_3)^2+2K_{(2)}^{23}E_2E_3\big],
\end{equation*}
where
\begin{equation*}
\begin{split}
K_{(2)}^{11}&=\frac{-f'_2(r)}{r^2}+w_{(2)}(r)\g^{11}=\eps_2\frac{ra'_2-2a_2}{r^3},\\
K_{(2)}^{22}&=\frac{-f_2(r)(2r-2M)+f'_2(r)\Delta}{r^2}+w_2(r)\g^{22}=\eps_2\Big[-\frac{r-2M}{r}a'_2+\frac{2M}{r^2}a_2\Big],\\
K_{(2)}^{33}&=-f_2(r)\partial_2\g^{33}+2g'_2(r)\g^{23}-f'_2(r)\g^{33}-\frac{2rf_2(r)\g^{33}}{r^2}+w_2(r)\g^{33}\\
&=\eps_2\Big[-\frac{r-2M+4\eps_2M}{r}a'_2+\frac{2M}{r^2}a_2\Big],\\
K_{(2)}^{23}&=\frac{-2Mrf_2(r)\chi'(r)-2Mf_2(r)\chi(r)+g'_2(r)\Delta}{r^2}+w_2(r)\g^{23}\\
&=-\eps_2\Big[-\frac{(1-\eps_2)(r-2M)}{r}a'_2(r)+\frac{2M}{r^2}a_2(r)\Big].
\end{split}
\end{equation*}
Using the explicit formula \eqref{moro12.5}, it is easy to see that 
\begin{equation}\label{moro13}
\begin{split}
&Q_{\mu\nu}{}^{(X_{(2)})}\pi^{\mu\nu}+w_{(2)}E_\mu E^\mu\\
&\geq\mathbf{1}_{(c_0,9M/4)}(r)(9M/4-r)^2M^{-3}\Big[C_2^{-1}\eps_2(E_2-E_3)^2+C_2^{-1}\eps_2^2(E_3)^2-C_2\eps_2(E_1)^2/r^2\Big],
\end{split}
\end{equation}
for a sufficiently large constant $C_2$, provided that $\eps_2$ is sufficiently small and $c_0$ is sufficiently close to $2M$. Moreover, using the definitions and \eqref{waveop2}--\eqref{waveop3},
\begin{equation*}
\phi m_{(2)}^\mu E_\mu+\frac{1}{2}\phi^2\big(\D^\mu m_{(2)\mu}-\square w_{(2)}\big)=\frac{\eps_2\gamma}{M^2}\phi(E_2-E_3)+\frac{\eps_2}{2}\phi^2\Big(\frac{1}{M^2}\gamma'+\frac{2}{rM^2}\gamma+2\square(a_2/r)\Big).
\end{equation*}
Therefore, recalling also that $\gamma\in[0,1]$ and completing the square, 
\begin{equation}\label{moro14}
\begin{split}
\mathcal{T}[X_{(2)},w_{(2)},m_{(2)}]&\geq\frac{\eps_2}{2M^2}\phi^2\gamma'+M^{-1}\eps_2^4\mathbf{1}_{(c_0,17M/8)}(r)\big[(E_2)^2+(E_3)^2\big]\\
&-C_2\eps_2\mathbf{1}_{(c_0,9M/4)}(r)\big[M^{-1}(E_1)^2/r^2+M^{-3}\phi^2\big],
\end{split}
\end{equation}
provided that $\eps_2$ is sufficiently small and $c_0$ is sufficiently close to $2M$.

We examine now \eqref{moro31} and \eqref{moro14} and fix the constant $\eps_1,\eps_2$ and the function $\gamma$ such that the sum $\mathcal{T}[X_{(1)},w_{(1)},m_{(1)}]+\mathcal{T}[X_{(2)},w_{(2)},m_{(2)}]$ is nonnegative when $r\in(c_0,DM]$. For the positivity of the zero order term we need that
\begin{equation}\label{other2}
\frac{M}{20r^4}+\frac{\eps_2\gamma'(r)}{2M^2}\geq \frac{C_1}{M^3}\mathbf{1}_{(c_0,r_\delta]}(r)+\frac{C_1\delta^2M^2}{r-2M}\mathbf{1}_{[r'_\delta,r_\delta]}(r)+\frac{C_2\eps_2}{M^3}\mathbf{1}_{(c_0,9M/4)}(r).
\end{equation}
Recall that $\delta=\eps_2^2M^{-2}$. The point is that
\begin{equation*}
\int_{c_0}^\infty\frac{C_1}{M^3}\mathbf{1}_{(c_0,r_\delta]}(r)+\frac{C_1\delta^2M^2}{r-2M}\mathbf{1}_{[r'_\delta,r_\delta]}(r)\,dr\leq \frac{C_1^2\eps_2^2}{M^2},
\end{equation*}
provided that $2M-c_0\leq\eps_2^2$. This is easy to see if one recalls the definitions of $r_\delta$ and $r'_\delta$. Therefore, assuming that $\eps_2$ is sufficiently small, one can fix the function $\gamma$ to achieve the inequality \eqref{other2}, while still preserving the other properties of $\gamma$, namely 
\begin{equation}\label{other3}
\gamma:[c_0,\infty)\to[0,1]\text{ is supported in }[c_0,17M/8]\text{ and satisfies }\gamma(2M)=1/2.
\end{equation}
Indeed, the function $\gamma$ can be chosen to increase on the interval $(c_0,r_\delta]$ and then decrease for $r\geq 2r_\delta-2M$ in a way to satisfy both \eqref{other2} and \eqref{other3}.

Notice that the sum of the first order terms in $\mathcal{T}[X_{(1)},w_{(1)},m_{(1)}]+\mathcal{T}[X_{(2)},w_{(2)},m_{(2)}]$ is nonnegative and nondegenerate if we simply have $\eps_1,\eps_2>0$ sufficiently small. Therefore, one can fix the parameters $\eps_1,\eps_2$ and the function $\gamma$ in such a way that
\begin{equation}\label{other4}
\begin{split}
\mathcal{T}[X_{(1)},w_{(1)},&m_{(1)}]+\mathcal{T}[X_{(2)},w_{(2)},m_{(2)}]\\
&\geq \eps_3\Big[\frac{(r-3M)^2}{r^3}(E_1/r)^2+\frac{M^2}{r^3}(E_2)^2+\frac{M^2(r-3M)^2}{r^5}(E_3)^2+\frac{M}{r^4}\phi^2\Big]\\
&-\eps_3^{-1}\frac{M}{r^4}\mathbf{1}_{[DM,\infty)}(r)\phi^2,
\end{split}
\end{equation}
for a constant $\eps_3>0$ sufficiently small (relative to $\eps_1$ and $\eps_2$). The parameter $D$ will be fixed later, sufficiently large depending on $\eps_3$.

We can prove now some partial bounds on the remaining terms
\begin{equation*}
2(P_{(1)\mu}+P_{(2)\mu})n_0^\mu,\qquad 2(P_{(1)\mu}+P_{(2)\mu}) k_0^\mu,
\end{equation*}
in the divergence identity \eqref{mora14}, where $P_{(k)}:=P[X_{(k)},w_{(k)},m_{(k)}]$, $k\in\{1,2\}$.

\begin{lemma}\label{flux}
There is a sufficiently small absolute constant $\eps_3$ such that
\begin{equation}\label{other5}
2(P_{(1)\mu}+P_{(2)\mu}) k^\mu\geq\eps_3\big[(E_1/r)^2+(E_2)^2(2-c/M)+M^{-2}\phi^2\big]-\eps_3^{-1}(E_3)^2.
\end{equation}
along $\mathcal{N}^c_{[t_1,t_2]}$. Also
\begin{equation}\label{other15}
\begin{split}
2(P_{(1)\mu}+P_{(2)\mu}) n^\mu&\geq -\eps_3^{-1}\big[\widetilde{F}_0+\mathbf{1}_{[8M,2DM]}(r)(E_3)^2\big]\\
&-\frac{\chi_{\geq 8M}(r)(1-p)}{r^2}\partial_2(r\phi^2)+\eps_3(E_2)^2\mathbf{1}_{(c_0,17M/8]}(r),
\end{split}
\end{equation}
and
\begin{equation}\label{other15.2}
\begin{split}
2(P_{(1)\mu}+P_{(2)\mu}) n^\mu&\leq \eps_3^{-1}\big[\widetilde{F}_0+\mathbf{1}_{[8M,2DM]}(r)(E_3)^2+(E_2)^2\mathbf{1}_{(c_0,17M/8]}(r)\big]\\
&-\frac{\chi_{\geq 8M}(r)(1-p)}{r^2}\partial_2(r\phi^2),
\end{split}
\end{equation}
where
\begin{equation}\label{other9}
\widetilde{F}_0=(E_1/r)^2+(L\phi)^2+M^2r^{-2}\big[(E_2)^2|1-p|+(E_3^2)\big]+r^{-2}\phi^2.
\end{equation}
\end{lemma}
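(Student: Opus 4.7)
\medskip

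\noindent\textbf{Proof proposal for Lemma \ref{flux}.} The strategy is a direct calculation, plugging the explicit multipliers \eqref{moro6} and \eqref{moro12} into the general formulas \eqref{mora5}--\eqref{mora6} for $2Q(n,X)$ and $2Q(k,X)$, and tracking carefully the contributions of the $w$ and $m$ pieces to $P_\mu$ coming from \eqref{mora12}. Concretely,
\[
2P_\mu n^\mu = 2Q_{\mu\nu}X^\nu n^\mu + w\phi\, E_\mu n^\mu - \tfrac{1}{2}\phi^2 n^\mu\D_\mu w + \tfrac{1}{2}\phi^2\, m_\mu n^\mu,
\]
and the analogous formula with $k$ in place of $n$. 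I would treat the null flux \eqref{other5} and the two spacelike flux bounds \eqref{other15}--\eqref{other15.2} separately, in each case splitting the integrand into near-horizon, transition, and asymptotic pieces.

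\medskip

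\noindent\emph{The null flux bound \eqref{other5}.} On $\mathcal{N}^c_{[t_1,t_2]}$ with $c\in (c_0,2M]$ the redshift multiplier $(X_{(2)},w_{(2)},m_{(2)})$ is the one that produces useful positivity, since $a_2(c)>0$ while $f_1(c)=a_1(c)\Delta(c)/c^2$ is small (as $\Delta(c)$ is small). Using $\chi(c)=1$ and the fact that $a_2$ is of order one there, formula \eqref{mora6} with $X_{(2)}$ gives
\[
2Q(k,X_{(2)}) \;\gtrsim\; \eps_2(E_1/r)^2 + \eps_2(2-c/M)(E_2)^2 \;-\; O(\eps_2)(E_3)^2,
\]
modulo the cross term $2Y_2Y_3 g_2(1-p)$, which has size $|1-p|\ll 1$ and is absorbed. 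The $(1/4)m_{(2)\mu}k^\mu\phi^2$ contribution together with $\gamma(2M)=1/2$ yields the $M^{-2}\phi^2$ lower bound after a Cauchy--Schwarz against the $w_{(2)}\phi E_\mu k^\mu$ term. The contribution of $(X_{(1)},w_{(1)},m_{(1)})$ on $\mathcal{N}^c$ is of order $O(|1-p|)$ in all quadratic forms, and so is absorbed up to the acceptable loss $-\eps_3^{-1}(E_3)^2$.

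\medskip

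\noindent\emph{The spacelike flux bounds \eqref{other15}--\eqref{other15.2}.} I would decompose $\Sigma_t^c$ into three regions. \emph{(i)} In $r\in(c_0,17M/8]$, the $m_{(2)}$-term $(1/4)m_{(2)\mu}n^\mu \phi^2$ combined with $w_{(2)}\phi E_\mu n^\mu$ produces the positive $\eps_3(E_2)^2$ piece by completion of squares, while all other terms are trivially bounded by $\widetilde F_0$. \emph{(ii)} In the intermediate range $[17M/8,8M]$, $X_{(1)}$ and $X_{(2)}$ are both bounded smooth vector-fields and every term is pointwise bounded by $\widetilde F_0$ after a Cauchy--Schwarz on $w_{(1)}\phi E_\mu n^\mu$. \emph{(iii)} In the asymptotic region $r\geq 8M$, one has $f_1(r)\to 1-p$, $g_1(r)\to 1$, and $w_{(1)}(r)\to 2/r$, so after writing $E_2+(1-p)^{-1}E_3 = L\phi$ (since $\chi_{\geq 4M}=1$ here) the quadratic form $2Q(n,X_{(1)})$ reduces modulo lower-order pieces to a multiple of $(L\phi)^2$ plus a remainder supported on $[8M,2DM]$ that carries the $\mathbf 1_{[8M,2DM]}(r)(E_3)^2$ error. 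The remaining scalar terms $-\tfrac{1}{4}\phi^2 n^\mu\D_\mu w_{(1)} + \tfrac{1}{2}w_{(1)}\phi E_\mu n^\mu$ combine, using $n^\mu\partial_\mu = -\g^{23}\partial_2 - \g^{33}\partial_3$ and the $\Z$-, $\T$-invariance, into exactly the asserted total-derivative contribution $-\chi_{\geq 8M}(r)(1-p)r^{-2}\partial_2(r\phi^2)$.

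\medskip

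\noindent\emph{Main obstacle.} The delicate point is identifying the precise $r$-dependence of the multiplier $X_{(1)}$ at infinity. Because $g_1(r)\to 1$ (not $\to(1-p)^{-1}f_1(r)$), the vector $X_{(1)}$ is not purely outgoing, and the naive quadratic contribution at infinity contains $(E_3)^2$ with a non-decaying coefficient. The saving feature is that $g_1(r)-f_1(r)/(1-p)$ is bounded and supported in a region containing $[8M,2DM]$ (thanks to the cut-off $\chi_{\geq DM}$ inside $a_1$), which produces the indicator $\mathbf 1_{[8M,2DM]}(r)(E_3)^2$ term. Getting both the lower bound \eqref{other15} and the matching upper bound \eqref{other15.2} to coexist requires that every remainder produced in the completion of squares for the $\tfrac12 w_{(1)}\phi E_\mu n^\mu$ cross term be absorbed into $\widetilde F_0$, which works because $w_{(1)}(r)=O(1/r)$ at infinity and because the $\Z$-invariance eliminates the angular cross terms. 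This last multiplier $(X_{(3)},w_{(3)},m_{(3)})$ at infinity, not yet added here, will later cancel the offending boundary contributions and turn the inequalities into the actual outgoing-energy bound.
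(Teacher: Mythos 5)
\medskip

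Your overall strategy (direct computation, splitting near-horizon/intermediate/asymptotic) matches the paper's approach, and your discussion of the $\mathbf{1}_{[8M,2DM]}(r)(E_3)^2$ correction coming from $|a_1-1|$ in the transition region is exactly right. However, two of the key mechanisms you propose are incorrect.

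\textbf{The total-derivative term.} You claim that the scalar pieces $-\tfrac14\phi^2\,n^\mu\D_\mu w_{(1)} + \tfrac12 w_{(1)}\phi E_\mu n^\mu$ (in $2P_\mu n^\mu$ the second has coefficient $1$, not $1/2$) combine directly into $-\chi_{\geq 8M}(r)(1-p)r^{-2}\partial_2(r\phi^2)$. This cannot work: since $w_{(1)}$ depends only on $r$ and $n = -\g^{23}\partial_2 - \g^{33}\partial_3$, one has $n^\mu\D_\mu w_{(1)} = -\g^{23}w_{(1)}'$, and $\g^{23}=p\chi$ vanishes for $r$ outside the support of $\chi$; hence $-\tfrac14\phi^2 n^\mu\D_\mu w_{(1)}$ is zero in the asymptotic region and contributes nothing there. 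The actual source of the derivative term is the completion of the square \emph{between the quadratic form $2Q(n,X_{(1)})$ and the cross term $w_{(1)}\phi E_\mu n^\mu$}. After substituting $a_1\approx 1$, $w_{(1)}\approx 2(1-p)/r$, $\g^{33}\approx -(1-p)^{-1}$, the expression $2n^\mu Q_{\mu\nu}X^\nu_{(1)}+w_{(1)}\phi E_\mu n^\mu$ reduces (modulo $\widetilde F_0$ and the $\mathbf{1}_{[8M,2DM]}(E_3)^2$ error) to
\[
(1-p)\Big(E_2+\tfrac{E_3}{1-p}\Big)^2 + \tfrac{2\phi E_3}{r}
=(1-p)\Big(L\phi+\tfrac{\phi}{r}\Big)^2 - (1-p)\Big[\tfrac{\phi^2}{r^2}+\tfrac{2\phi E_2}{r}\Big],
\]
and it is the last bracket that equals $(1-p)r^{-2}\partial_2(r\phi^2)$. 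Without this completion of squares your pointwise inequality will have an uncontrolled $\phi E_3/r$ cross term that cannot be absorbed by $\widetilde F_0$.

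\textbf{The $(E_2)^2$ positivity near the horizon.} You attribute $\eps_3(E_2)^2\mathbf{1}_{(c_0,17M/8]}$ to the $m_{(2)}$-term combined with $w_{(2)}\phi E_\mu n^\mu$. But $m_{(2)\mu}n^\mu\phi^2$ carries only $\phi^2$ and the $w_{(2)}\phi E$ cross term carries $\phi\cdot\partial\phi$; completing squares between them gives a $\phi^2$ estimate, not $(E_2)^2$. The positive $(E_2)^2$ piece in fact comes from the coefficient $g_2(1-p)-2f_2\g^{23}$ in $2Q(n,X_{(2)})$; since $f_2(2M)=-\eps_2 a_2(2M)<0$ and $\g^{23}=p\chi\approx 1$ there, the redshift term $-2f_2\g^{23}$ is positive of order $\eps_2$. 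This is the observation the paper uses ($-f_2(2M)\gtrsim 1$). The $m_{(2)}$-term instead combines with $-\tfrac12\phi^2 k^\mu\D_\mu(w_{(1)}+w_{(2)})$ to produce the zero-order positivity $\eps_3 M^{-2}\phi^2$ in the null-flux estimate \eqref{other5}.

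With these two mechanisms corrected, the rest of your outline is essentially the paper's argument.
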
 

\begin{proof} We start with the term $2(P_{(1)\mu}+P_{(2)\mu}) k^\mu$,
\begin{equation*}
\begin{split}
2(P_{(1)\mu}+P_{(2)\mu}) k^\mu&=2k^\mu Q_{\mu\nu}\big(X^\nu_{(1)}+X^\nu_{(2)}\big)+(w_{(1)}+w_{(2)})\phi E_\mu k^\mu\\
&-\frac{1}{2}\phi^2k^\mu(\D_\mu w_{(1)}+\D_\mu w_{(2)})+\frac{1}{2}k^\mu m_{(2)\mu}\phi^2.
\end{split}
\end{equation*}
When $r=c\in(c_0,2M]$ and assuming that $2M-c$ is sufficiently small, we use the definitions, the identity $m_{(2)3}(2M)\geq \eps_2/(2M^2)$, and the identities \eqref{mora6}. We have
\begin{equation*}
2k^\mu Q_{\mu\nu}\big(X^\nu_{(1)}+X^\nu_{(2)}\big)\geq \eps_3\big[(E_1/r)^2+(E_2)^2(2-c/M)\big]-\eps_3^{-1}(E_3)^2-\eps_3^{-1}|E_2E_3|(2-c/M),
\end{equation*} 
\begin{equation*}
\big|(w_{(1)}+w_{(2)})\phi E_\mu k^\mu\big|\leq\eps_3^{-1}M^{-1}|\phi|\big[(2-c/M)|E_2|+|E_3|\big],
\end{equation*} 
and
\begin{equation*}
-\frac{1}{2}\phi^2k^\mu(\D_\mu w_{(1)}+\D_\mu w_{(2)})+\frac{1}{2}k^\mu m_{(2)\mu}\phi^2\geq \eps_3M^{-2}\phi^2,
\end{equation*} 
provided that $\eps_3$ is sufficiently small. The bound \eqref{other5} follows by further reducing $\eps_3$ and assuming that $2M-c$ is sufficiently small.

We consider now the term $2(P_{(1)\mu}+P_{(2)\mu}) n^\mu$,
\begin{equation*}
\begin{split}
2(P_{(1)\mu}+P_{(2)\mu}) n^\mu&=2n^\mu Q_{\mu\nu}\big(X^\nu_{(1)}+X^\nu_{(2)}\big)+(w_{(1)}+w_{(2)})\phi E_\mu n^\mu\\
&-\frac{1}{2}\phi^2n^\mu(\D_\mu w_{(1)}+\D_\mu w_{(2)})+\frac{1}{2}n^\mu m_{(2)\mu}\phi^2.
\end{split}
\end{equation*}
Using the definitions and the identities \eqref{mora5} we estimate
\begin{equation}\label{other8}
\big|\phi^2n^\mu(\D_\mu w_{(1)}+\D_\mu w_{(2)})\big|+\big|n^\mu m_{(2)\mu}\phi^2\big|\leq\eps_3^{-1}M^2r^{-4}\phi^2.
\end{equation}
Moreover, with $\widetilde{F}_0$ as in \eqref{other9}, we write
\begin{equation*}
\begin{split}
2n^\mu &Q_{\mu\nu}X^\nu_{(1)}+w_{(1)}\phi E_\mu n^\mu=(E_1)^2\frac{g_1(r)}{r^2}+(E_2)^2[g_1(r)(1-p)-2f_1(r)\g^{23}]\\
&+(E_3)^2[-g_1(r)\g^{33}]+2E_2E_3[-f_1(r)\g^{33}]+w_1\phi(-\g^{33}E_3-\g^{23}E_2)\\
&\geq -(10\eps_3)^{-1}\widetilde{F}_0+\big[(E_2)^2(1-p)+(E_3)^2(1-p)^{-1}+2E_2E_3a_1+\phi E_3 w_1(1-p)^{-1}\big]\chi_{\geq 8M}(r).
\end{split}
\end{equation*}
Using the definitions and the formula \eqref{moro37.10},
\begin{equation*}
\begin{split}
|a_1-1|\chi_{\geq 8M}(r)&\lesssim Mr^{-1}\mathbf{1}_{[8M,2DM]}(r)+M^2r^{-2}\mathbf{1}_{[8M,\infty)}(r),\\
|w_1-2(1-p)/r|\chi_{\geq 8M}(r)&\lesssim Mr^{-2}\mathbf{1}_{[8M,2DM]}(r)+M^2r^{-3}\mathbf{1}_{[8M,\infty)}(r).
\end{split}
\end{equation*}
Therefore
\begin{equation*}
\begin{split}
2n^\mu Q_{\mu\nu}X^\nu_{(1)}+w_{(1)}\phi E_\mu n^\mu&\geq -(9\eps_3)^{-1}\big[\widetilde{F}_0+\mathbf{1}_{[8M,2DM]}(r)(E_3)^2\big]-\Big[\frac{\phi^2}{r^2}+\frac{2\phi}{r}E_2\Big]\chi_{\geq 8M}(r)(1-p)\\
&\geq -(8\eps_3)^{-1}\big[\widetilde{F}_0+\mathbf{1}_{[8M,2DM]}(r)(E_3)^2\big]-\frac{\chi_{\geq 8M}(r)(1-p)}{r^2}\partial_2(r\phi^2).
\end{split}
\end{equation*}

Similarly, using also the observation that $-f_2(2M)\gtrsim 1$,
\begin{equation*}
2n^\mu Q_{\mu\nu}X^\nu_{(2)}+w_{(2)}\phi E_\mu n^\mu\geq -(8\eps_3)^{-1}\widetilde{F}_0+\eps_3(E_2)^2\mathbf{1}_{(c_0,17M/8]}(r).
\end{equation*}
The bound \eqref{other15} follows using the last two inequalities and \eqref{other8}. 

The proof of the upper bound \eqref{other15.2} follows in a similar way.
\end{proof}

\begin{remark}\label{OtherEnergy}
At this point one can recover the energy estimate of Marzuola--Metcalfe--Tataru--Tohaneanu \cite[Theorem 1.2]{MaMeTaTo},
\begin{equation*}
\mathcal{E}^{c_0}(t_2)+\mathcal{B}^{c_0}(t_1,t_2)\leq \overline{C}\mathcal{E}^{c_0}(t_1),
\end{equation*}
where 
\begin{equation*}
\mathcal{E}^{c_0}(t):=\int_{\Sigma_t^{c_0}}\Big[(E_1/r)^2+(E_2)^2+(E_3^2)\Big]\,d\mu_t,
\end{equation*}
\begin{equation*}
\begin{split}
\mathcal{B}^{c_0}(t_1,t_2):=\int_{\mathcal{D}^{c_0}_{[t_1,t_2]}}\Big[\frac{(r-3M)^2}{r^3}(E_1/r)^2+\frac{M^2}{r^3}(E_2)^2+\frac{M^2(r-3M)^2}{r^5}(E_3)^2+\frac{M}{r^4}\phi^2\Big]\,d\mu.
\end{split}
\end{equation*}
To see this, we simply set $D:=\infty$ and add in a very large multiple of the Killing vector-field $\partial_3$. The spacetime integral $\mathcal{B}^{c_0}(t_1,t_2)$ is generated by the right-hand side of \eqref{other4} (some of the powers of $r$ in the spacetime integral could in fact be improved by reexamining the proof).  The formulas for the nondegenerate energies $\mathcal{E}^{c_0}(t)$ follow from the bounds \eqref{other15} and \eqref{other15.2}, the identity \eqref{mora5}, and the Hardy inequality in Lemma \ref{GeneralLemma} (i). The contribution of $P_\mu k^\mu$ along $\mathcal{N}^c_{[t_1,t_2]}$ becomes nonnegative, in view of \eqref{mora6}, and can be neglected.
\end{remark}

\subsection{Outgoing energies}\label{outgoingEn} To prove the stronger estimates in Theorem \ref{MainTheorem3} we consider now a multiplier $(X_{(3)},w_{(3)},m_{(3)})$ of the form 
\begin{equation}\label{outs}
\begin{split}
&X_{(3)}=f_3\partial_2+\Big(\frac{f_3}{1-p}+g_3\Big)\partial_3,\qquad w_{(3)}=\frac{2f_3}{r},\\
&m_{(3)1}=m_{(3)4}=0,\qquad m_{(3)2}=\frac{2h_3}{r(1-p)},\qquad m_{(3)3}=-\frac{2h_3}{r},
\end{split}
\end{equation}
where $f_3,g'_3,h_3$ are smooth functions supported in $\{r\geq 8M\}$, which depend only on $r$. The function $g_3$ is not supported in $\{r\geq 8M\}$, it is in fact a very large constant in the region $r\in[c,10M]$.

As before, using \eqref{mora4}, we calculate
\begin{equation*}
Q_{\mu\nu}{}^{(X_{(3)})}\pi^{\mu\nu}+w_{(3)}E_\mu E^\mu=\big[K^{11}_{(3)}(E_1)^2+
K^{22}_{(3)}(E_2)^2+K^{33}_{(3)}(E_3)^2+2K^{23}_{(3)}E_2E_3\big],
\end{equation*}
where
\begin{equation*}
\begin{split}
&K^{11}_{(3)}=\frac{-f_3'(r)}{r^2}+w_{(3)}(r)\g^{11}=\frac{2f_3-rf'_3}{r^3},\\
&K^{22}_{(3)}=\frac{-f_3(r)(2r-2M)+f'_3(r)\Delta}{r^2}+w_{(3)}(r)\g^{22}=(1-p)f'_3-\frac{2Mf_3}{r^2},\\
&K^{33}_{(3)}=-f_3(r)\partial_2\g^{33}-f'_3(r)\g^{33}-\frac{2f_3(r)\g^{33}}{r}+w_{(3)}(r)\g^{33}=\frac{f'_3}{1-p}-\frac{2Mf_3}{r^2(1-p)^2},\\
&K^{23}_{(3)}=(1-p)\Big(\frac{f_3}{1-p}+g_3\Big)'=f'_3-\frac{2Mf_3}{r^2(1-p)}+(1-p)g'_3.
\end{split}
\end{equation*}
Moreover
\begin{equation*}
\begin{split}
\phi m_{(3)}^\mu E_\mu&+\frac{1}{2}\phi^2\big(\D^\mu m_{(3)\mu}-\square w_{(3)}\big)\\
&=2h_3\frac{\phi}{r}\Big(E_2+\frac{E_3}{1-p}\Big)+\phi^2\Big[\frac{h'_3}{r}+\frac{h_3}{r^2}-\frac{(1-p)f''_3}{r}-\frac{2Mf'_3}{r^3}+\frac{2Mf_3}{r^4}\Big].
\end{split}
\end{equation*}
Set
\begin{equation}\label{outs3}
\begin{split}
&H_3:=(1-p)f'_3-\frac{2Mf_3}{r^2}+(1-p)^2g'_3,\\
&h_3:=H_3\cdot (1-\widetilde{\alpha}),
\end{split}
\end{equation}
where $\widetilde{\alpha}=(2-\alpha)/10>0$. The identities above show that
\begin{equation*}
\begin{split}
\mathcal{T}[X_{(3)},w_{(3)},m_{(3)}]&=\frac{(E_1)^2}{r^2}\frac{2f_3-rf'_3}{r}+H_3\Big(E_2+\frac{E_3}{1-p}\Big)^2-(1-p)^2g'_3\Big[(E_2)^2+\frac{(E_3)^2}{(1-p)^2}\Big]\\
&+2h_3\frac{\phi}{r}\Big(E_2+\frac{E_3}{1-p}\Big)+\phi^2\Big[\frac{h_3}{r^2}+\frac{h'_3}{r}-\frac{(1-p)f''_3}{r}-\frac{2Mf'_3}{r^3}+\frac{2Mf_3}{r^4}\Big].
\end{split}
\end{equation*}
After completing the square this becomes
\begin{equation}\label{outs7}
\begin{split}
\mathcal{T}[X_{(3)},&w_{(3)},m_{(3)}]=\frac{(E_1)^2}{r^2}\frac{2f_3-rf'_3}{r}+H_3\Big(L\phi+\frac{(1-\widetilde{\alpha})\phi}{r}\Big)^2-(1-p)^2g'_3\Big[(E_2)^2+\frac{(E_3)^2}{(1-p)^2}\Big]\\
&+\phi^2\Big[\frac{(\widetilde{\alpha}-\widetilde{\alpha}^2)H_3-\widetilde{\alpha}rH'_3}{r^2}+\frac{6Mf_3}{r^4}-\frac{2Mf'_3}{r^3}+\frac{(1-p)^2g''_3}{r}+\frac{4M(1-p)g'_3}{r^3}\Big].
\end{split}
\end{equation}

Using \eqref{mora5} we calculate
\begin{equation*}
\begin{split}
2P_{(3)\mu} n^\mu&=2Q_{\mu\nu}X^\nu_{(3)} n^\mu+w_{(3)}\phi E_\mu n^\mu-\frac{1}{2}\phi^2n^\mu\D_\mu w_{(3)}+\frac{1}{2}n^\mu m_{(3)\mu}\phi^2\\
&=\frac{(E_1)^2}{r^2}\Big[\frac{f_3}{1-p}+g_3\Big]+(E_2)^2\big[f_3+g_3(1-p)\big]+(E_3)^2\Big[\frac{f_3}{(1-p)^2}+\frac{g_3(1-p^2\chi^2)}{1-p}\Big]\\
&+2E_2E_3\frac{f_3}{1-p}+\frac{2f_3}{r(1-p)}\phi E_3+\frac{m_{(3)3}}{2(1-p)}\phi^2\\
&=\frac{(E_1)^2}{r^2}\Big[\frac{f_3}{1-p}+g_3\Big]+f_3\Big[E_2+\frac{E_3}{1-p}+\frac{\phi}{r}\Big]^2-f_3\frac{\phi^2}{r^2}\\
&-2f_3E_2\frac{\phi}{r}+g_3(1-p)\Big[(E_2)^2+\frac{(E_3)^2(1-p^2\chi^2)}{(1-p)^2}\Big]-\frac{h_3}{r(1-p)}\phi^2.
\end{split}
\end{equation*}
Therefore
\begin{equation}\label{outs6}
\begin{split}
2P_{(3)\mu} n^\mu&=\frac{(E_1)^2}{r^2}\Big[\frac{f_3}{1-p}+g_3\Big]+f_3\Big[L\phi+\frac{\phi}{r}\Big]^2+g_3(1-p)\Big[(E_2)^2+\frac{(E_3)^2(1-p^2\chi^2)}{(1-p)^2}\Big]\\
&-\frac{1}{r^2}\partial_2\big[f_3r\phi^2\big]+\phi^2\Big[\frac{\widetilde{\alpha}H_3}{r(1-p)}+\frac{2Mf_3}{r^3(1-p)}-\frac{(1-p)g'_3}{r}\Big].
\end{split}
\end{equation}

\subsection{Proof of the Theorem \ref{MainTheorem3}}\label{endproof}
We compare now the expressions \eqref{outs7} and \eqref{outs6} with the lower bounds in \eqref{other4} and \eqref{other15}. We would like to fix the functions $f_3$ and $g_3$ and the constant $D$ in such a way that the sum of the corresponding expressions is bounded from below. More precisely, the sum of the spacetime integrals is pointwise bounded from below, while the sum of the integrals on the surfaces $\Sigma_t^c$ is bounded from below after integration by parts and the use of a simple Hardy-type inequality. 

One should think of the functions $f_3$ and $g_3$ in the following way: the function $f_3$ vanishes when $r\leq 8M$ and behaves like $r^\alpha$ as $r\to\infty$. On the other hand the function $g_3$ is an extremely large constant when $r\leq C_4M$, for some large constant $C_4$ but vanishes as $r\to\infty$ at a rate of $r^{\alpha-2}$. More precisely, we are looking for functions $f_3,g_3$ of the form
\begin{equation}\label{outs40}
f_3(r)=\eps_4\chi_{\geq 8M}(r)e^{\beta(r)},\qquad g_3(r)=\int_{r}^\infty \rho(s)\,ds,
\end{equation}
where $\eps_4=\eps_3^2$ is a small constant, $C_4=C_4(\alpha)\geq \eps_4^{-4}\alpha^{-1}(2-\alpha)^{-1}$ is a large constant (to be fixed), and $\beta,\rho:(c,\infty)\to[0,\infty)$ are smooth functions satisfying
\begin{equation}\label{outs41}
\begin{split}
\beta(r)\in[-10,0]\text{ and }M\beta'(r)\in[1/10,10]&\qquad\text{ if }r\in(c,8M],\\
\max\Big(\frac{\alpha}{100r},\frac{4M}{r^2}+\frac{1}{r}\mathbf{1}_{[8M,C_4M]}(r)\Big)\leq \beta'(r)\leq \frac{2}{r}&\qquad\text { if }r\in[8M,\infty),\\
\rho(r)=0\text{ and }g_3(r)\in[C_4/2,2C_4]&\qquad\text{ if }r\leq C_4M,\\
\rho(r)\leq \frac{\eps_4}{100}\beta'(r)e^{\beta(r)}\text{ and }\rho'(r)\leq\frac{\eps_4M}{100r^3}e^{\beta(r)}&\qquad\text{ if }r\geq C_4M,\\
\frac{e^\beta M^2}{r^2}\leq g_3(r)\leq\frac{C_4^{10}e^\beta M^2}{r^2}&\qquad\text{ if }r\geq C_4M,\\
(1-2\widetilde{\alpha})H_3(r)-rH'_3(r)\geq 0&\qquad\text{ if }r\in [16M,\infty).
\end{split}
\end{equation}
A specific choice satisfying these conditions is given in \eqref{outs50}--\eqref{outs51}. As a result of these conditions, we clearly have
\begin{equation}\label{outs41.9}
\begin{split}
&g'_3=-\rho,\\
&H_3\geq\frac{\eps_4}{100}e^{\beta}\beta'\chi_{\geq 8M},\\
&e^{\beta(r)}\in[r/(100M),r^2/M^2]\qquad\text{ for }r\in (c,C_4M].
\end{split}
\end{equation}

Let
\begin{equation*}
\begin{split}
(X,w,m)&:=(X_{(1)},w_{(1)},m_{(1)})+(X_{(2)},w_{(2)},m_{(2)})+(X_{(3)},w_{(3)},m_{(3)}),\\
\mathcal{T}[X,w,m]&:=\mathcal{T}[X_{(1)},w_{(1)},m_{(1)}]+\mathcal{T}[X_{(2)},w_{(2)},m_{(2)}]+\mathcal{T}[X_{(3)},w_{(3)},m_{(3)}],\\
P_{\mu}&:=P_{(1)\mu}+P_{(2)\mu}+P_{(3)\mu}.
\end{split}
\end{equation*}

Our next lemma contains the main bounds on the terms in the divergence identity \eqref{mora14}.

\begin{lemma}\label{BigBounds} 
Assume that the conditions \eqref{outs41} hold and that $C_4$ sufficiently large (depending on $\eps_4$). Then there is an absolute constant $\eps_5=\eps_5(\alpha)>0$ sufficiently small such that
\begin{equation}\label{outs43}
\begin{split}
\mathcal{T}[X,w,m]&\geq\eps_5\Big[\Big(\frac{2}{r}e^\beta-\beta'e^\beta+\frac{100}{r}\Big)\frac{(r-3M)^2}{r^2}\frac{(E_1)^2}{r^2}+e^{\beta}\beta'(L\phi)^2\\
&+\Big(\rho+\frac{M^2}{r^3}\Big)(E_2)^2+\Big(\rho+\frac{M^2(r-3M)^2}{r^5}\Big)(E_3)^2+\frac{e^\beta\beta'}{r^2}\phi^2\Big].
\end{split}
\end{equation}
Moreover, for any $t\in[0,T]$,
\begin{equation}\label{outs43.1}
\int_{\Sigma_{t}^c}2P_\mu n_0^\mu\,d\mu_{t}\geq \eps_5\int_{\Sigma_{t}^c}e^\beta\frac{(E_1)^2}{r^2}+e^\beta(L\phi)^2+g_3\big[(E_2)^2+(E_3)^2\big]+\frac{e^\beta\beta'}{r}\phi^2\,d\mu_t
\end{equation}
and
\begin{equation}\label{outs43.2}
\int_{\Sigma_{t}^c}2P_\mu n_0^\mu\,d\mu_{t}\leq \eps_5^{-1}\int_{\Sigma_{t}^c}e^\beta\frac{(E_1)^2}{r^2}+e^\beta(L\phi)^2+g_3\big[(E_2)^2+(E_3)^2\big]+\frac{e^\beta\beta'}{r}\phi^2\,d\mu_t.
\end{equation}
Finally,
\begin{equation}\label{outs43.3}
2P_\mu k^\mu\geq \eps_5\Big[\frac{(E_1)^2}{M^2}+(E_2)^2\frac{2M-c}{M}+(E_3)^2+\frac{\phi^2}{M^2}\Big]\qquad\text{ along }\mathcal{N}^c_{[t_1,t_2]},
\end{equation}
\end{lemma}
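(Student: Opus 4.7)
The plan is to add the three multiplier triplets and verify each of \eqref{outs43}--\eqref{outs43.3} by partitioning $(c,\infty)$ into three regions: (I) the horizon--trapping region $r\in(c,8M]$, (II) the transition region $r\in[8M,C_4M]$, and (III) the asymptotic region $r\geq C_4M$. The key simplification is that in region (I) the new multiplier $(X_{(3)},w_{(3)},m_{(3)})$ reduces to $X_{(3)}=g_3(r)\partial_3$ with $g_3\approx C_4$, since by \eqref{outs41} we have $f_3\equiv 0$, $\rho\equiv 0$, and thus $H_3=h_3=w_{(3)}=0$ there.

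For the bulk estimate \eqref{outs43} I would sum \eqref{other4} with the explicit \eqref{outs7}. In region (I) multiplier $(3)$ contributes nothing and the claim follows directly from \eqref{other4}, using $e^\beta\lesssim 1$ and $\beta'\lesssim 1/M$ there. In region (II) the first-order terms in \eqref{outs7} are nonnegative by $H_3\geq(\eps_4/100)e^\beta\beta'\chi_{\geq 8M}$ from \eqref{outs41.9} and $-(1-p)^2 g_3'=\rho(1-p)^2\geq 0$, and the choice $\eps_4=\eps_3^2\ll\eps_3$ ensures that multiplier $(3)$ does not spoil the degenerate trapping positivity produced by \eqref{other4}. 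In region (III) the structural hypothesis $(1-2\widetilde{\alpha})H_3-rH'_3\geq 0$ in \eqref{outs41} yields
\[
(\widetilde{\alpha}-\widetilde{\alpha}^2)H_3-\widetilde{\alpha}rH'_3 = \widetilde{\alpha}\bigl[(1-\widetilde{\alpha})H_3-rH'_3\bigr]\geq \widetilde{\alpha}^2 H_3\gtrsim \eps_4\widetilde{\alpha}^2 e^\beta\beta',
\]
which, combined with $\beta'\gtrsim\alpha/r$, dominates the bad term $-\eps_3^{-1}Mr^{-4}\mathbf{1}_{[DM,\infty)}\phi^2$ of \eqref{other4} once $D$ is chosen large depending on $\eps_3,\eps_4,C_4,\alpha$. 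The $(E_1)^2$ coefficient $(2f_3-rf'_3)/r^3=\eps_4 e^\beta(2-r\beta')/r^3$ is nonnegative since $\beta'\leq 2/r$, and together with the trapping contribution from \eqref{other4} reproduces the weighted angular term in \eqref{outs43}.

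For the surface bounds \eqref{outs43.1}--\eqref{outs43.2} I would sum \eqref{other15}, \eqref{other15.2}, and \eqref{outs6}. The total-derivative terms $-r^{-2}\partial_2(f_3 r\phi^2)$ in \eqref{outs6} and $-\chi_{\geq 8M}(1-p)r^{-2}\partial_2(r\phi^2)$ in \eqref{other15} are integrated by parts along $\Sigma_t^c$; the inner boundary at $r=c$ contributes nothing because $f_3(c)=0$ and $\chi_{\geq 8M}$ is supported away from $r=c$, and the resulting $\phi^2$ contributions are absorbed by the first-order terms via the Hardy inequality of Lemma~\ref{GeneralLemma}(i). In region (I) the surviving boundary contribution from multiplier $(3)$ is essentially $g_3\bigl[(E_1)^2/r^2+(1-p)(E_2)^2+(1-p^2\chi^2)(E_3)^2/(1-p)\bigr]$ by \eqref{outs6}, which dominates the negative errors in \eqref{other15} once $C_4\gg\eps_3^{-1}$. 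In region (III) the contributions $f_3(L\phi+\phi/r)^2$ and $g_3(1-p)[(E_2)^2+(E_3)^2(1-p^2\chi^2)/(1-p)^2]$ produce precisely the outgoing-energy terms weighted by $e^\beta$ demanded by \eqref{outs43.1}--\eqref{outs43.2}, using $g_3\in[e^\beta M^2/r^2,C_4^{10}e^\beta M^2/r^2]$ from \eqref{outs41}.

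For the horizon bound \eqref{outs43.3}, at $r=c$ we have $f_3(c)=g'_3(c)=0$, hence $H_3(c)=h_3(c)=w_{(3)}(c)=0$; only the $g_3\partial_3$ piece of $X_{(3)}$ survives in \eqref{mora6}, giving
\[
2P_{(3)\mu}k^\mu\big|_{r=c}=g_3(c)\bigl[2\g^{23}(E_3)^2+2(1-p)E_2E_3\bigr]\geq C_4(E_3)^2-C_4\overline{\varep}(E_2)^2
\]
by Cauchy--Schwarz and $\g^{23}(c)=p\chi(c)\geq 1/2$, $|1-p|\leq\overline{\varep}$, $g_3(c)\in[C_4/2,2C_4]$. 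Adding \eqref{other5} and taking first $C_4\geq 10\eps_3^{-1}$ and then $\overline{\varep}$ small enough absorbs the $-\eps_3^{-1}(E_3)^2$ term and the cross-term while preserving the $(E_2)^2(2-c/M)$ coefficient. The main obstacle throughout is the careful ordering of the many small constants and large parameters, which must be fixed in the sequence $\eps_1,\eps_2,\eps_3$, then $\eps_4=\eps_3^2$, then $C_4=C_4(\alpha,\eps_4)$, then $D$ large depending on all of these, and finally $\overline{\varep}$ small depending on all of them; a secondary technical difficulty is the integration by parts along $\Sigma_t^c$, where one has to track the weights $(1-p)$ and the precise relation between the induced measure $d\mu_t$ and the spacetime volume form when applying the Hardy inequality.
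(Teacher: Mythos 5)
Your proposal is correct and follows essentially the same approach as the paper: superposing the three multipliers, using the constant ordering $\eps_4=\eps_3^2\ll\eps_3$ so that the positive degenerate trapping terms from \eqref{other4} absorb the contributions of the third multiplier in the transition region, using the constancy of $g_3$ near the horizon, integrating by parts to absorb the total-derivative terms via Hardy on $\Sigma_t^c$, and Cauchy–Schwarz at $r=c$ with $C_4\gg\eps_3^{-1}$ and $\overline\varep$ small. The only cosmetic differences are that you organize explicitly by three regions whereas the paper decomposes into terms $I_1,\dots,I_6$, and that for \eqref{outs43.1} and \eqref{outs43.2} one uses \eqref{other15} and \eqref{other15.2} separately rather than ``summed,'' but the substance is identical.
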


\begin{proof}
We start with the proof of \eqref{outs43}. Using the definitions we have
\begin{equation}\label{outs42}
\begin{split}
\frac{2f_3-rf'_3}{r}&=\eps_4 e^{\beta}\big[(2/r-\beta')\chi_{\geq 8M}-\chi'_{\geq 8M}\big],\\
\frac{6Mf_3}{r^4}-\frac{2Mf'_3}{r^3}+\frac{(1-p)^2g''_3}{r}+\frac{4M(1-p)g'_3}{r^3}&\geq\frac{\eps_4 M}{100r^4}e^\beta\chi_{\geq 8M}-\frac{2\eps_4 M}{r^3}e^\beta\chi'_{\geq 8M}.
\end{split}
\end{equation}
We combine the formulas \eqref{other4} and \eqref{outs7} to estimate
\begin{equation*}
\mathcal{T}[X,w,m]\geq I_1+I_2+I'_2+I_3,
\end{equation*}
where
\begin{equation*}
\begin{split}
&I_1:=\frac{(E_1)^2}{r^2}\frac{2f_3-rf'_3}{r}+\eps_3\frac{(E_1)^2}{r^2}\frac{(r-3M)^2}{r^3},\\
&I_2:=H_3\Big(L\phi+\frac{(1-\widetilde{\alpha})\phi}{r}\Big)^2,\\
&I'_2:=-(1-p)^2g'_3\Big[(E_2)^2+\frac{(E_3)^2}{(1-p)^2}\Big]+\eps_3\Big[\frac{M^2}{r^3}(E_2)^2+\frac{M^2(r-3M)^2}{r^5}(E_3)^2\Big],\\
&I_3:=\phi^2\Big[\frac{(\widetilde{\alpha}-\widetilde{\alpha}^2)H_3-\widetilde{\alpha}rH'_3}{r^2}+\frac{\eps_4 M}{100r^4}e^\beta\chi_{\geq 8M}-\frac{2\eps_4 M}{r^3}e^\beta\chi'_{\geq 8M}+\frac{\eps_3M}{r^4}-\eps_3^{-1}\frac{M}{r^4}\mathbf{1}_{[DM,\infty)}(r)\Big].
\end{split}
\end{equation*}
Using \eqref{outs41}, \eqref{outs41.9}, and \eqref{outs42} it is easy to see that, for some sufficiently small constant $\eps_5$ (which may depend on $\alpha$),
\begin{equation*}
\begin{split}
I_1&\geq\eps_5\Big[e^\beta\Big(\frac{2}{r}-\beta'\Big)\chi_{\geq 8M}+\frac{(r-3M)^2}{r^3}\Big]\frac{(E_1)^2}{r^2},\\
I_2+I'_2&\geq \eps_5e^{\beta}\beta'\chi_{\geq 8M}\Big(L\phi+\frac{(1-\widetilde{\alpha})\phi}{r}\Big)^2+\eps_5\Big(\rho+\frac{M^2}{r^3}\Big)(E_2)^2+\eps_5\Big(\rho+\frac{M^2(r-3M)^2}{r^5}\Big)(E_3)^2,\\
I_3&\geq \eps_5\Big(\frac{M}{r^4}e^{\beta}\chi_{\geq 8M}+\frac{e^{\beta}\beta'}{r^2}\Big)\phi^2,
\end{split}
\end{equation*}
provided that $\eps_4$ is fixed (sufficiently small relative to $\eps_3$), and $D$ is sufficiently large depending on $\eps_4$ such that $e^{\beta(DM)}\geq \eps_4^{-4}$). The bound \eqref{outs43} follows. 

To prove \eqref{outs43.1} we combine now the formulas \eqref{outs6}, \eqref{other15}, and \eqref{other9} to estimate
\begin{equation*}
2P_{\mu} n^\mu\geq I_4+I_5+I_6-\frac{1}{r^2}\partial_2\big[f_3r\phi^2\big]-\frac{\chi_{\geq 8M}(r)(1-p)}{r^2}\partial_2(r\phi^2),
\end{equation*}
where
\begin{equation*}
\begin{split}
I_4:&=\frac{(E_1)^2}{r^2}\Big[\frac{f_3}{1-p}+g_3\Big]-\eps_3^{-1}\frac{(E_1)^2}{r^2},\\
I_5:&=f_3\Big(L\phi+\frac{\phi}{r}\Big)^2+g_3(1-p)\Big[(E_2)^2+\frac{(E_3)^2(1-p^2\chi^2)}{(1-p)^2}\Big]+\eps_3(E_2)^2\mathbf{1}_{(c_0,17M/8]}(r)\\
&-\eps_3^{-1}\Big[(L\phi)^2+\frac{M^2}{r^{2}}\big[(E_2)^2|1-p|+(E_3^2)\big]\Big],\\
I_6:&=\phi^2\Big[\frac{\widetilde{\alpha}F_3}{r(1-p)}+\frac{2Mf_3}{r^3(1-p)}-\frac{(1-p)g'_3}{r}\Big]-\eps_3^{-1}\frac{1}{r^2}\phi^2.
\end{split}
\end{equation*}
Using \eqref{outs41}, \eqref{outs41.9}, and \eqref{outs42} it follows that
\begin{equation*}
\begin{split}
I_4&\geq \eps_5e^\beta\frac{(E_1)^2}{r^2},\\
I_5+I_6&\geq \eps_5e^\beta\Big(L\phi+\frac{\phi}{r}\Big)^2+[g_3(1-p)+\eps_3\mathbf{1}_{(c_0,17M/8]}(r)]\frac{(E_2)^2}{2}+\eps_5 g_3(E_3)^2\\
&+\eps_4\frac{\widetilde{\alpha}e^\beta\beta'}{1000r}\phi^2-10\eps_3^{-1}\frac{1}{r^2}\phi^2,
\end{split}
\end{equation*}
provided that $C_4$ is sufficiently large (relative to $\eps_4$) and $|c_0-2M|\leq C_4^{-10}$ is sufficiently small. Using the Hardy inequalities in Lemma \ref{GeneralLemma} (i) and (ii) it is easy to see that the integral on the negative term $-10\eps_3^{-1}r^{-2}\phi^2$ in $I_6$ along $\Sigma_t^c$ can be absorbed by the integrals of the positive terms $\eps_4\frac{\widetilde{\alpha}e^\beta\beta'}{1000r}\phi^2$ and $g_3(1-p)\frac{(E_2)^2}{2}$, provided that the constant $C_4$ is sufficiently large.

Moreover, notice that for any $t\in[0,T]$
\begin{equation*}
\int_{\Sigma_t^c}2P_\mu n_0^\mu\,d\mu_{t}=C\int_{\mathbb{S}^2}\int_{c}^\infty2P_\mu n^\mu r^2(\sin\theta)\,drd\theta.
\end{equation*}
After integration by parts in $r$ it follows that
\begin{equation*}
\begin{split}
\Big|\int_{\mathbb{S}^2} &\int_{c}^\infty\frac{1}{r^2}\partial_2\big[f_3r\phi^2\big]r^2(\sin\theta)\,drd\theta\Big|\\
&+\Big|\int_{\mathbb{S}^2} \int_{c}^\infty\frac{\chi_{\geq 8M}(r)(1-p)}{r^2}\partial_2(r\phi^2)r^2(\sin\theta)\,drd\theta\Big|\leq \eps_4^{-1}\int_{\Sigma_t^c}\frac{1}{r^2}\phi^2\,d\mu_t,
\end{split}
\end{equation*}
so these terms can also be absorbed. The desired bound \eqref{outs43.1} follows.

The proof of \eqref{outs43.2} is similar, starting from the inequality \eqref{other15.2} and the identity \eqref{outs6}. To prove \eqref{outs43.3} we start from the bound \eqref{other5},
\begin{equation*}
2(P_{(1)\mu}+P_{(2)\mu}) k^\mu\geq\eps_3\big[(E_1/r)^2+(E_2)^2(2-c/M)+M^{-2}\phi^2\big]-\eps_3^{-1}(E_3)^2.
\end{equation*}
The identity \eqref{mora6} shows that
\begin{equation*}
2P_{(3)\mu} k^\mu=2k^\mu Q_{\mu\nu}X^\nu_{(3)}=2g_3(c)p(E_3)^2+2g_3(c)(1-p)E_2E_3.
\end{equation*}
The lower bound \eqref{outs43.3} follows since $g_3(c)\in[C_4/2,2C_4]$, provided that $C_4$ is sufficiently large and $|c-2M|/M$ is sufficiently small.
\end{proof}

\begin{proof}[Proof of Theorem \ref{MainTheorem3}] We can now complete the proof of Theorem \ref{MainTheorem3}, using Lemma \ref{BigBounds} and the divergence identity. We have to fix functions $\beta$ and $\rho$ satisfying \eqref{outs41}. With $\alpha$ as in the statement of the theorem, we define first the smooth function $\beta$ by setting $\beta(8M)=0$ and
\begin{equation}\label{outs50}
\begin{split}
&\beta'(r)=\Big(\frac{4M}{r^2}+\frac{1}{r}\Big)\big(1-\chi_{\geq C_4^4M}(r)\big)+\frac{\alpha}{r}\chi_{\geq C_4^4M}(r).
\end{split}
\end{equation}
This choice clearly satisfies the first two conditions in \eqref{outs41}. Then we define  
\begin{equation}\label{outs51}
\rho(r)=\delta M^{-1}\Big[\chi_{\geq C_4M}(r)+\chi_{\geq 4C_4^4M}(r)\Big(C_4^7e^{\beta(r)}\frac{M^3}{r^3}-1\Big)\Big],
\end{equation}
where $\delta\in[10^{-4}C_4^{-3},10^4C_4^{-3}]$ is such that $\int_{C_4M}^\infty\rho(s)\,ds=C_4$.

Notice that
\begin{equation}\label{outs52}
e^{\beta(r)}\approx \frac{r}{M}\text{ if }r\leq 10 C_4^4M\quad \text{ and }\quad e^{\beta (r)}\approx C_4^4\Big(\frac{r}{C_4^4M}\Big)^\alpha\text{ if }r\geq (1/10) C_4^4M.
\end{equation} 
The other bounds in \eqref{outs41} follow easily. Moreover, the definitions show that
\begin{equation*}
\begin{split}
&e^{\beta(r)}\approx_{C_5}\frac{r^\alpha}{M^\alpha},\qquad \beta'(r)\approx_{C_5}\frac{1}{r},\qquad \Big(\frac{2}{r}-\beta'(r)\Big)\approx_{C_5}\frac{1}{r},\\
&\rho(r)\approx_{C_5}\chi_{\geq C_4M}(r)\frac{M^{2-\alpha}}{r^{3-\alpha}}, \qquad g_3(r)\approx_{C_5}\frac{r^{\alpha-2}}{M^{\alpha-2}}.
\end{split}
\end{equation*}
for some large constant $C_5$, where $A\approx_{C_5}B$ means $A\in[C_5^{-1}B,C_5B]$. The desired conclusion of the theorem follows from Lemma \ref{BigBounds} and the divergence identity.
\end{proof}

\section{Proof of Theorem \ref{MainTheorem}}\label{lok}

In this section we prove Theorem \ref{MainTheorem}. We still use some of the ideas from the previous section. We use the more complicated divergence identities \eqref{zq51.5} and \eqref{zq52.5},
\begin{equation}\label{moro3}
\begin{split}
2\D^\mu P_\mu&=2X^\nu J_\nu+Q_{\mu\nu}{}^{(X)}\pi^{\mu\nu}+w(E_\alpha E^\alpha+F_\alpha F^\alpha+M_\alpha M^\alpha)+(\phi m^\mu\D_\mu\phi+\psi{m'}^\mu\D_\mu\psi)\\
&+\frac{1}{2}\phi^2(\D^\mu m_\mu-\square w)+\frac{1}{2}\psi^2(\D^\mu m'_\mu-\square w)+w(\phi\mathcal{N}_\phi+\psi\mathcal{N}_\psi),
\end{split}
\end{equation} 
where
\begin{equation}\label{moro2.01}
E_\mu=\D_\mu\phi+\psi A^{-1}\D_\mu B,\quad F_\mu=\D_\mu\psi-\phi A^{-1}\D_\mu B,\quad M_\mu=\frac{\phi\D_\mu B-\psi\D_\mu A}{A},
\end{equation}
\begin{equation}\label{moro1.99}
Q_{\mu\nu}:=E_\mu E_\nu+F_\mu F_\nu+M_\mu M_\nu-(1/2)\g_{\mu\nu}(E_\alpha E^\alpha+F_\alpha F^\alpha+M_\alpha M^\alpha),
\end{equation}
\begin{equation}\label{moro2}
P_\mu=P_\mu[X,w,m,m']=Q_{\mu\nu}X^\nu+\frac{1}{2}w(\phi E_\mu+\psi F_\mu)-\frac{1}{4}\D_\mu w(\phi^2+\psi^2)+\frac{1}{4}(m_\mu\phi^2+m'_\mu\psi^2),
\end{equation}
and
\begin{equation}\label{moro4}
J_\nu=\frac{2\D_\nu BM^\mu E_\mu-2\D_\nu AM^\mu F_\mu}{A}+\mathcal{N}_\phi E_\nu+\mathcal{N}_\psi F_\nu.
\end{equation}

Recall (see \eqref{zq4}) that
\begin{equation}\label{ell11}
A=\frac{\Sigma^2(\sin\theta)^2}{q^2},\qquad B=-\Big[2aM(3\cos\theta-(\cos\theta)^3)+\frac{2a^3M(\sin\theta)^4\cos\theta}{q^2}\Big].
\end{equation}
These formulas show that
\begin{equation}\label{ell12}
\begin{split}
&A^{-1}\D_1B=\frac{6aM q^2\sin\theta}{\Sigma^2}-\frac{2a^3M[4\sin\theta q^2-5(\sin\theta)^3q^2+2a^2(\sin\theta)^3(\cos\theta)^2]}{\Sigma^2q^2},\\
&A^{-1}\D_2B=\frac{4ra^3M(\sin\theta)^2\cos\theta}{q^2\Sigma^2},\\
&A^{-1}\D_1A=\frac{2\cos\theta}{\sin\theta}-\frac{2a^2\Delta\sin\theta\cos\theta}{\Sigma^2}-\frac{2a^2\sin\theta\cos\theta}{q^2},\\
&A^{-1}\D_2A=\frac{4r(r^2+a^2)-a^2(\sin\theta)^2(2r-2M)}{\Sigma^2}-\frac{2r}{q^2}.
\end{split}
\end{equation}
Notice that
\begin{equation}\label{ell13}
r^{-1}\Big|\frac{\D_1 B}{A}\Big|+\Big|\frac{\D_2 B}{A}\Big|+r^{-1}\Big|\frac{\D_1 A}{A}-\frac{2\cos\theta}{\sin\theta}\Big|+\Big|\frac{\D_2 A}{A}-\frac{2}{r}\Big|\lesssim aMr^{-3}.
\end{equation}
and
\begin{equation}\label{ell13.9}
\begin{split}
\frac{|E_1-\D_1\phi|}{r}+|E_2-\D_2\phi|+|E_3-\D_3\phi|&\lesssim aMr^{-3}\big(|\phi|+|\psi|\big),\\
\frac{|F_1-\D_1\psi|}{r}+|F_2-\D_2\psi|+|F_3-\D_3\psi|&\lesssim aMr^{-3}\big(|\phi|+|\psi|\big),\\
\Big|\frac{M_1}{r}+\frac{2\cos\theta}{r\sin\theta}\psi\Big|+\Big|M_2+\frac{2\psi}{r}\Big|+|M_3|&\lesssim aMr^{-3}\big(|\phi|+|\psi|\big).
\end{split}
\end{equation}

Letting $\og^{\al\be}$ denote the Schwarzschild components of the metric, see \eqref{mora2}, and $\g^{\al\be}$ the Kerr components, we notice that
\begin{equation}\label{DiffComp}
\begin{split}
&\g^{11}=\og^{11}+O(a^2r^{-4}),\qquad\quad \g^{22}=\og^{22}+O(a^2r^{-2}),\\
&\g^{23}=\og^{23}+O(a^2M^2r^{-4}),\qquad \g^{33}=\og^{33}+O(a^2r^{-2}).
\end{split}
\end{equation}

We notice that the term $J_1$ in \eqref{moro4} is singular when $\theta=0$, due to the fraction $\D_1A/A$. To eliminate this 
singularity we work with a modification of the $1$-form $P$, namely
\begin{equation}\label{moro2.1}
\widetilde{P}_\mu=\widetilde{P}_\mu[X,w,m,m']:=P_\mu-\frac{X^\nu\D_\nu A}{A}\frac{\D_\mu A}{A}\psi^2.
\end{equation}
Then
\begin{equation}\label{moro3.1}
2\D^\mu \widetilde{P}_\mu=2\D^\mu P_\mu-4\frac{X^\nu\D_\nu A}{A}\frac{\D_\mu A}{A}\psi\D^\mu\psi-
2\D^\mu\Big[\frac{X^\nu\D_\nu A}{A}\frac{\D_\mu A}{A}\Big]\psi^2=\sum_{j=1}^5 L^j,
\end{equation}
where
\begin{equation}\label{moro3.2}
\begin{split}
&L^1=L^1[X,w,m,m']:=Q_{\mu\nu}{}^{(X)}\pi^{\mu\nu}+w(E_\alpha E^\alpha+F_\alpha F^\alpha+M_\alpha M^\alpha),\\
&L^2=L^2[X,w,m,m']:=\phi m^\mu\D_\mu\phi+\psi{m'}^\mu\D_\mu\psi,\\
&L^3=L^3[X,w,m,m']:=\frac{1}{2}\phi^2(\D^\mu m_\mu-\square w)+\frac{1}{2}\psi^2(\D^\mu m'_\mu-\square w),\\
&L^4=L^4[X,w,m,m']:=-2\D^\mu\Big[\frac{X^\nu\D_\nu A}{A}\frac{\D_\mu A}{A}\Big]\psi^2,\\
&L^5=L^5[X,w,m,m']:=2X^\nu J_\nu-4\frac{X^\nu\D_\nu A}{A}\frac{\D_\mu A}{A}\psi\D^\mu\psi+w(\phi\mathcal{N}_\phi+\psi\mathcal{N}_\psi).
\end{split}
\end{equation}

The terms $L^1,L^2,L^3$ are similar to the corresponding terms we estimated in the proof of Theorem \ref{MainTheorem3}. The main new terms are $L^4$ and the quadratic part of $L^5$. We describe these terms below.

\begin{lemma}\label{clan0}
Assuming that $X=f\partial_2+g\partial_3$, where $f$ may depend only on $r$, we have
\begin{equation}\label{moro43}
L^4=-8\frac{\g^{22}}{r}\partial_2\big[r^{-1}f\big]\psi^2+O(a^2r^{-5})\big[|f|+r|f'|\big]\psi^2
\end{equation}
and
\begin{equation}\label{moro43.1}
\begin{split}
|L^5|&\lesssim \frac{aM}{r^4}|f|\big(|\phi|+|\psi|\big)\Big\{\sum_{Y\in\{E,F\}}\Big(\frac{|Y_1|}{r}+\frac{M}{r}|Y_2|+\frac{M}{r}|Y_3|\Big)+\frac{1}{r}\big(|\phi|+|\psi|\big)\Big\}\\
&+|\mathcal{N}_\phi|\big|2fE_2+2gE_3+w\phi\big|+|\mathcal{N}_\psi|\big|2fF_2+2gF_3+w\psi\big|.
\end{split}
\end{equation}
\end{lemma}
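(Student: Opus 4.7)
The plan is to compute $L^4$ and $L^5$ directly, exploiting two algebraic facts: the wave-map equation \eqref{zq5}, which implies $\square\log A = -|\D B|^2/A^2$, and a precise cancellation in $L^5$ that removes the singular factor $\D_1 A/A \sim 2\cot\theta$ from the source $J_\nu$. Throughout I use the $\T$- and $\Z$-invariance of $A$ and $B$, i.e.\ $\D_3 A = \D_4 A = \D_3 B = \D_4 B = 0$.

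To establish the formula for $L^4$, I would start from $L^4 = -2\psi^2\D^\mu[G\,\D_\mu\log A]$ where $G := X^\nu\D_\nu A/A = f\,\partial_r\log A$. Expanding gives
\[
L^4 = -2\psi^2\,\D^\mu G\,\D_\mu\log A - 2\psi^2\,G\,\square\log A.
\]
The wave-map identity \eqref{zq5} gives $\square A = (|\D A|^2 - |\D B|^2)/A$, hence $\square\log A = -|\D B|^2/A^2$, which is $O(a^2M^2/r^6)$ by \eqref{ell13}; the resulting contribution is $O(a^2r^{-5})|f|\psi^2$ after using $M\lesssim r$ in the exterior region. For the remaining divergence term I use the factorization $A = \sin^2\theta\cdot A_0(r,\theta)$, with $A_0 = \Sigma^2/q^2 = q^2 + (p+1)a^2\sin^2\theta$ smooth and bounded below. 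This yields $\partial_r\log A = \partial_r\log A_0 = 2/r + O(a^2/r^3)$ and $\partial_\theta\log A - 2\cot\theta = \partial_\theta\log A_0 = O(a^2\sin\theta\cos\theta/r^2)$. Since $\log A$ is $\T$- and $\Z$-invariant,
\[
\D^\mu G\,\D_\mu\log A = \g^{11}\partial_1 G\,\partial_1\log A + \g^{22}\partial_2 G\,\partial_2\log A.
\]
The leading contribution from the second piece equals $\g^{22}\cdot 2\partial_r(f/r)\cdot(2/r) = 4\g^{22}r^{-1}\partial_r(f/r)$, producing precisely the claimed main term $-8\g^{22}r^{-1}\partial_2[r^{-1}f]\psi^2$. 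All subleading contributions from both pieces are of order $a^2(|f|/r^5 + |f'|/r^4)$, as required.

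For the $L^5$ bound I would first expand $2X^\nu J_\nu$ via \eqref{moro4}, using $X^\nu\D_\nu B = f\D_2 B$ and $X^\nu\D_\nu A = f\D_2 A$, to obtain
\[
2X^\nu J_\nu = \frac{4f\D_2 B\,M^\mu E_\mu}{A} - \frac{4f\D_2 A\,M^\mu F_\mu}{A} + 2\mathcal{N}_\phi(fE_2 + gE_3) + 2\mathcal{N}_\psi(fF_2 + gF_3).
\]
The crucial step is to rewrite the correction contribution in $L^5$ using $\D_\mu\psi = F_\mu + \phi\D_\mu B/A$ together with $\psi\D^\mu A/A = \phi\D^\mu B/A - M^\mu$, obtaining
\[
-\frac{4X^\nu\D_\nu A}{A}\frac{\D^\mu A}{A}\psi\D_\mu\psi = \frac{4X^\nu\D_\nu A}{A}\Big[M^\mu F_\mu + \frac{\phi M^\mu\D_\mu B}{A} - \frac{\phi\D^\mu B F_\mu}{A} - \frac{\phi^2\D^\mu B\D_\mu B}{A^2}\Big].
\]
The $M^\mu F_\mu$ pieces now cancel exactly against the corresponding contribution in $2X^\nu J_\nu$. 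Each surviving non-$\mathcal{N}$ term in $L^5$ therefore carries at least one factor of $\D B/A$, which by \eqref{ell13} is $O(aM/r^2)$; combining this with $|X^\nu\D_\nu A/A|\lesssim |f|/r$ and the pointwise bounds \eqref{ell13.9} on $M_\mu, E_\mu, F_\mu$ produces the $(aM/r^4)|f|(|\phi|+|\psi|)\{\cdots\}$ portion of the claim. Grouping $2\mathcal{N}_\phi(fE_2+gE_3)$ with $w\phi\mathcal{N}_\phi$, and likewise for $\psi$, yields the $\mathcal{N}_\phi$ and $\mathcal{N}_\psi$ terms in exactly the stated form.

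The main obstacle is the $L^4$ identity: one must verify that the $\g^{11}\partial_1 G\,\partial_1\log A$ cross term, despite the apparent singularity $2\cot\theta$ in $\partial_1\log A$, is smooth with error of the claimed size. This is forced by the factorization $A = \sin^2\theta\cdot A_0$: the $\cot\theta$ arises only after taking logs, and in the cross term it is multiplied by $\partial_1(\partial_r\log A_0) = O(a^2\sin\theta\cos\theta/r^3)$, which carries a compensating $\sin\theta$. The algebra for $L^5$ is lengthier but entirely mechanical once the cancellation of $M^\mu F_\mu$ is identified; without that cancellation, the unavoidable $2\cot\theta$ piece of $\D_1 A/A$ would obstruct any pointwise bound of the stated shape.
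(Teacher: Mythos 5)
Your treatment of $L^4$ is essentially the paper's: both apply Leibniz to $-2\psi^2\D^\mu\big[(f\D_2 A/A)(\D_\mu A/A)\big]$, reduce $\square\log A$ to $-|\D B|^2/A^2$ via the wave-map equation, and exploit the factorization $A=(\sin\theta)^2 A_0$ so that the singular $2\cot\theta$ in $\partial_1\log A$ is paired with the vanishing $\partial_1\partial_r\log A_0 = O(a^2\sin\theta\cos\theta/r^3)$. That part is correct. Your cancellation of $M^\mu F_\mu$ in $L^5$ is also the right structural identity; the paper carries it out by keeping $\frac{2\D_\nu BM^\mu E_\mu}{A}$ separate and grouping $\frac{-2\D_\nu AM^\mu F_\mu}{A}$ with the correction term rather than expanding both, but your algebra is equivalent to theirs.

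The gap is in how you then close the estimate for the surviving terms of $L^5$. You propose to use the coarse bound $|\D B/A|=O(aM/r^2)$ from \eqref{ell13} together with \eqref{ell13.9}, but \eqref{ell13.9} records that $M_1 = -2\psi\cot\theta + O\big(aMr^{-2}(|\phi|+|\psi|)\big)$, which is unbounded at the axis. In the surviving term $\tfrac{4f\D_2 B}{A}M^\mu E_\mu$, using only $|\D_2 B/A|\lesssim aMr^{-3}$ the contribution $\frac{\D_2 B}{A}\,\g^{11}M_1 E_1$ still carries an uncontrolled $\cot\theta$; likewise the term proportional to $\D^\mu A\,\D_\mu B/A^2$ contains $2\cot\theta$ from $\D_1 A/A$, and multiplying by $|\D_1 B/A|\lesssim aMr^{-2}$ does not remove it. What saves these terms, and what the paper invokes explicitly (``Using \eqref{ell12} and \eqref{moro2.01}, we estimate\ldots''), is the \emph{precise} formula \eqref{ell12}: $A^{-1}\D_1 B$ carries a factor of $\sin\theta$ and $A^{-1}\D_2 B$ carries $(\sin\theta)^2\cos\theta$, exactly enough to cancel the $\cot\theta$ coming from $M_1$ and from $\D_1 A/A$. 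The coarse bound \eqref{ell13} discards this vanishing and is not sufficient on its own; you need to bring in \eqref{ell12} here, just as you already did (implicitly, through $A=(\sin\theta)^2 A_0$) in your $L^4$ argument.
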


\begin{proof} We rewrite
\begin{equation*} 
L^4=-2\D^\mu\Big[\frac{X^\nu\D_\nu A}{A}\frac{\D_\mu A}{A}\Big]\psi^2=-2\D^\mu\Big[\frac{f\D_2 A}{A}\frac{\D_\mu A}{A}\Big]\psi^2.
\end{equation*}
In view of \eqref{zq5} and \eqref{ell12},
\begin{equation*}
\Big|\frac{f\D_2 A}{A}\D^\mu\Big[\frac{\D_\mu A}{A}\Big]\Big|=\Big|\frac{f\D_2 A}{A}\frac{\D_\mu B\D^\mu B}{A^2}\Big|\lesssim \frac{a^2M^2}{r^7}|f|.
\end{equation*}
Also
\begin{equation*}
\Big|\g^{11}\partial_1\Big[\frac{f\partial_2 A}{A}\Big]\frac{\partial_1 A}{A}\Big|\lesssim \frac{a^2}{r^5}|f|.
\end{equation*}
and
\begin{equation*}
\Big|\g^{22}\partial_2\Big[\frac{f\partial_2 A}{A}\Big]\frac{\partial_2 A}{A}-\g^{22}\partial_2\Big[\frac{2f}{r}\Big]\frac{2}{r}\Big|\lesssim \frac{a^2}{r^5}|f|+\frac{a^2}{r^4}|f'|.
\end{equation*}
The desired formula \eqref{moro43} follows.

We estimate now the term $L^5$. We start by rewriting
\begin{equation*}
\begin{split}
L^5&=2X^\nu \Big[\frac{2\D_\nu BM^\mu E_\mu-2\D_\nu AM^\mu F_\mu}{A}\Big]-4\frac{X^\nu\D_\nu A}{A}\frac{\D_\mu A}{A}\psi\D^\mu\psi\\
&+\mathcal{N}_\phi\big(2X^\nu E_\nu+w\phi\big)+\mathcal{N}_\psi\big(2X^\nu F_\nu+w\psi\big).
\end{split}
\end{equation*}
Using \eqref{ell12} and \eqref{moro2.01}, we estimate
\begin{equation*}
\Big|2X^\nu \frac{2\D_\nu BM^\mu E_\mu}{A}\Big |\lesssim \frac{a^2M}{r^5}|f|\big(|\phi|+|\psi|\big)\big[|E_1/r|+Mr^{-1}|E_2|+Mr^{-1}|E_3|\big],
\end{equation*}
and
\begin{equation*}
\Big|2X^\nu \Big[\frac{-2\D_\nu AM^\mu F_\mu}{A}\Big]-4\frac{X^\nu\D_\nu A}{A}\frac{\D_\mu A}{A}\psi\D^\mu\psi\Big|\lesssim \frac{aM}{r^4}|f||\phi|\Big[|F_1/r|+\frac{M}{r}|F_2|+\frac{M}{r}|F_3|+\frac{1}{r}|\psi|\Big].
\end{equation*}
The desired formula \eqref{moro43.1} follows.
\end{proof}

As in the proof of Theorem \ref{MainTheorem3}, our goal is to choose suitable multipliers $(X,w,m,m')$ in a such a way that the quadratic terms in the divergence formula
\begin{equation}\label{red2}
\int_{\Sigma_{t_1}^c}\widetilde{P}_\mu n_0^\mu\,d\mu_{t_1}=\int_{\Sigma_{t_2}^c}\widetilde{P}_\mu n_0^\mu\,d\mu_{t_2}+\int_{\mathcal{N}^c_{[t_1,t_2]}}\widetilde{P}_\mu k_0^\mu\,d\mu_c+\int_{\mathcal{D}^c_{[t_1,t_2]}}\D^\mu \widetilde{P}_\mu\,d\mu
\end{equation}
are nonegative, where $t_1,t_2\in[0,T]$, $c\in(c_0,\r_h]$, $n_0:=n/|\g^{33}|^{1/2}$, $k_0:=k/|\g^{22}|^{1/2}$, and the integration is with respect to the natural measures induced by the metric $\g$. 

\subsection{The multipliers $(X_{(k)},w_{(k)},m_{(k)},m'_{(k)})$, $k\in\{1,2,3,4\}$} In this subsection we introduce the main multipliers. The multipliers $(X_{(k)},w_{(k)},m_{(k)},m'_{(k)})$, $k\in\{1,2,3\}$ are analogous to the multipliers $(X_{(k)},w_{(k)},m_{(k)})$, $k\in\{1,2,3\}$, used in the analysis of the wave equation in Schwarzschild spacetime in the previous section. On the other hand, the multiplier $(X_{(4)},w_{(4)},m_{(4)},m'_{(4)})$, which is supported in a small region close to the trapped set, is new and is used mostly to control the contribution of the new term $L^4$ in \eqref{moro3.2}.  

\subsubsection{Analysis around the trapped set} As in the previous section, we start by constructing the multiplier $(X_{(1)},w_{(1)},m_{(1)},m'_{(1)})$, which is relevant in a neighborhood of the trapped set. For now our main concern is the positivity of the spacetime integral $\D^\mu \widetilde{P}_\mu$; as in the proof of Theorem \ref{MainTheorem3}, the positivity of the surfaces integrals along $\Sigma_{t}^c$ and $\mathcal{N}^c_{[t_1,t_2]}$ can only be addressed after the other multipliers are introduced. 

It is important to recall that we are in the axially symmetric case. Therefore the relevant trapped null geodesics are still confined to a codimension $1$ set. More precisely, recalling that $a\ll M$, it is easy to see that the equation $r^3-3Mr^2+a^2r+Ma^2=0$ has a unique solution $r^\ast\in(c_0,\infty)$. 
Moreover, $r^\ast\in[3M-a^2/M,3M]$ and 
\begin{equation}\label{moro9}
\big|r^3-3Mr^2+a^2r+Ma^2-(r-r^\ast)r^2\big|\lesssim (a^2/M)r|r-r^\ast|\qquad\,\text{ if }r\in (c_0,\infty).
\end{equation} 

We start by setting, as before,
\begin{equation}\label{moro6.7}
\begin{split}
&X_{(1)}:=f_1(r)\partial_2+g_1(r)\partial_3,\qquad f_1(r):=\frac{a_1(r)\Delta}{r^2},\qquad g_1(r):=\frac{a_1(r)\chi(r)2M}{r}+1,\\
&w_{(1)}(r,\theta):=f'_1(r)+f_1(r)\partial_r\log\big(\Sigma^2/\Delta)-\eps_1\widetilde{w}(r),\\
&\widetilde{w}(r):=M^2(r-33M/16)^3(r-r^\ast)^2r^{-8}\mathbf{1}_{[33M/16,\infty)}(r),\\
&m_{(1)}=m'_{(1)}:=0,\\
\end{split}
\end{equation}
where $a_1:(0,\infty)\to\mathbb{R}$ is a smooth function to be fixed, $\lim_{r\to\infty}a_1(r)=1$, $\eps_1\in(0,1]$ is a small constant and $\Sigma^2=(r^2+a^2)^2-a^2(\sin\theta)^2\Delta$ is as 
in \eqref{zq2}. 

Let 
\begin{equation*}
L^j_{(1)}:=L^j[X_{(1)},w_{(1)},m_{(1)},m'_{(1)}],
\end{equation*}
for $j\in\{1,2,3,4,5\}$, see \eqref{moro3.2}. Notice that
\begin{equation}\label{moro11.7}
L^2_{(1)}=0,\qquad L^3_{(1)}=-\frac{1}{2}\square w_{(1)}(\phi^2+\psi^2).
\end{equation}
Using \eqref{kra7.4},
\begin{equation*}
L^1_{(1)}=\sum_{Y\in\{E,F,M\}}\big[K_{(1)}^{11}(Y_1)^2+
K_{(1)}^{22}(Y_2)^2+K_{(1)}^{33}(Y_3)^2+2K_{(1)}^{23}Y_2Y_3\big],
\end{equation*}
where
\begin{equation*}
\begin{split}
&K_{(1)}^{11}=\frac{-f'_1(r)}{q^2}+w_{(1)}(r,\theta)\g^{11},\\
&K_{(1)}^{22}=\frac{-f_1(r)(2r-2M)+f'_1(r)\Delta}{q^2}+w_{(1)}(r,\theta)\g^{22},\\
&K_{(1)}^{33}=-f_1(r)\partial_2\g^{33}+2g'_1(r)\g^{23}-f'_1(r)\g^{33}-\frac{2rf_1(r)\g^{33}}{q^2}+w_{(1)}(r,\theta)\g^{33},\\
&K_{(1)}^{23}=\frac{-2Mrf_1(r)\chi'(r)-2Mf_1(r)\chi(r)+g'_1(r)\Delta}{q^2}+w_{(1)}(r,\theta)\g^{23}.
\end{split}
\end{equation*}

Simple calculations, using also \eqref{exp2.1}, show that
\begin{equation}\label{moro8.7}
\begin{split}
&\partial_r\log\big(\Sigma^2/\Delta)=\frac{\Delta\partial_r\Sigma^2-\Sigma^2\partial_r\Delta}{\Delta\Sigma^2}=\frac{2(r^2+a^2)(r^3-3Mr^2+a^2r+Ma^2)}{\Delta\Sigma^2},\\
&\g^{33}=-\frac{\Sigma^2}{q^2\Delta}+\frac{4M^2r^2}{q^2\Delta}\chi(r)^2.
\end{split}
\end{equation}
Using also the formulas \eqref{moro6} and \eqref{exp2.1} we calculate
\begin{equation*}
\begin{split}
&K_{(1)}^{11}=a_1(r)\frac{2(r^2+a^2)(r^3-3Mr^2+a^2r+Ma^2)}{r^2q^2\Sigma^2}-\eps_1\widetilde{w}(r)\g^{11},\\
&K_{(1)}^{22}=\frac{2\Delta^2}{q^2r^2}\Big[a'_1(r)+a_1(r)\frac{-2a^2(r^2+a^2)+a^2(\sin\theta)^2(r^2-3Mr+2a^2)}{\Sigma^2r}\Big]-\eps_1\widetilde{w}(r)\g^{22},\\
&K_{(1)}^{33}=\frac{8M^2\chi(r)^2}{q^2}\Big[a'_1(r)+a_1(r)\frac{-2a^2(r^2+a^2)+a^2(\sin\theta)^2(r^2-3Mr+2a^2)}{\Sigma^2r}\Big]-\eps_1\widetilde{w}(r)\g^{33},\\
&K_{(1)}^{23}=\frac{4M\Delta\chi(r)}{q^2r}\Big[a'_1(r)+a_1(r)\frac{-2a^2(r^2+a^2)+a^2(\sin\theta)^2(r^2-3Mr+2a^2)}{\Sigma^2r}\Big]-\eps_1\widetilde{w}(r)\g^{23}.
\end{split}
\end{equation*}

Therefore
\begin{equation}\label{moro10.7}
\begin{split}
L^1_{(1)}&\geq\sum_{Y\in\{E,F,M\}}\Big\{\frac{(2-a/M)a_1(r)(r-r^\ast)-\eps_1r^4\widetilde{w}(r)q^{-2}}{r^4}(Y_1)^2\\
&+\Big[(2-a/M)a'_1(r)-\eps_1\widetilde{w}(r)\frac{r^4}{q^2\Delta}\Big]\Big(\frac{\Delta}{r^2}Y_2+\frac{2M\chi(r)}{r}Y_3\Big)^2+\eps_1\widetilde{w}(r)\frac{\Sigma^2}{q^2\Delta}(Y_3)^2\Big\},
\end{split}
\end{equation}
provided that $a$ is sufficiently small and
\begin{equation}\label{littletest}
a_1(r^\ast)=0\qquad\text{ and }\qquad a'_1(r)\geq a^{1/2}M^{3/2}r^{-3}|a_1(r)|\text{ for }r\in(c_0,\infty).
\end{equation}
This condition is clearly satisfied by the function $a_1$ defined below.

The important function $a_1$ is defined as in the proof of Theorem \ref{MainTheorem3}, see \eqref{moro35},
\begin{equation}\label{moro35.7}
\begin{split}
&R(r):=(r-r^\ast)(r+2M)+6M^2\log\Big(\frac{r-\r_h}{r^\ast-\r_h}\Big),\\
&a_1(r):=r^{-2}\delta^{-1}\kappa(\delta R(r))+\Big[\frac{r^\ast-2M}{r}-\frac{6M^2}{r^2}\log\Big(\frac{r-\r_h}{r^\ast-\r_h}\Big)\Big]\chi_{\geq DM}(r),
\end{split}
\end{equation}
where $\delta:=\eps_2^2M^{-2}$ is a small constant and $D\gg 1$ is a large constant. This function can be analyzed as in section \ref{MoraEst}, see \eqref{moro35.15}--\eqref{moro30.3}, once we observe that
\begin{equation*}
\r_h=2M+O(a^2/M),\qquad r^\ast=3M+O(a^2/M),\qquad \Sigma^2=r^4+O(a^2r^2).
\end{equation*}
Recalling also the identities \eqref{DiffComp} and defining
\begin{equation}\label{moro35.127}
h_1(r,\theta):=f'_1(r)+f_1(r)\partial_r\log\big(\Sigma^2/\Delta)=\frac{\Delta}{\Sigma^2}\partial_r\big[a_1(r)\Sigma^2r^{-2}\big],
\end{equation}
we estimate, as in \eqref{moro31.7},
\begin{equation}\label{moro31.77}
\begin{split}
(\square h_1)(r,\theta)&=-\frac{2M}{r^4}\Big(7-\frac{44M}{r}+\frac{72M^2}{r^2}\Big)+O(ar^{-4})+O(Mr^{-4})\mathbf{1}_{[DM,\infty)}(r)\\
&+M^{-3}O(1)\mathbf{1}_{(c_0,r_\delta]}(r)+O\Big(\frac{\delta^2M^2}{r-\r_h}\Big)\mathbf{1}_{[r'_\delta,r_\delta]}(r),
\end{split}
\end{equation}
where $r_\delta$ and $r'_\delta$ denote the unique numbers in $(\r_h,\infty)$ with the property that $R(r_\delta)=-1/\delta$ and $R(r'_\delta)=-2/\delta$. We also have, compare with \eqref{moro35.11},
\begin{equation}\label{moro35.117}
a_1(r^\ast)=0\qquad\text{ and }\qquad a'_1(r)\geq 10 M^2r^{-3}\qquad\text{ for }r\in (c_0,\infty),
\end{equation}
if $\delta$ is sufficiently small. In particular, this implies \eqref{littletest} if $a$ is sufficiently small relative to $\eps_2$. 

The bound \eqref{moro10.7} shows that
\begin{equation}\label{moro31.777}
\begin{split}
L_{(1)}^1\geq\sum_{Y\in\{E,F,M\}}\Big\{&\frac{(2-C_1\eps_1)a_1(r)(r-r^\ast)}{r^4}(Y_1)^2+\eps_1\widetilde{w}(r)(Y_3)^2\\
&+(2-C_1\eps_1)a'_1(r)\Big(\frac{\Delta}{r^2}Y_2+\frac{2M\chi(r)}{r}Y_3\Big)^2\Big\},
\end{split}
\end{equation}
for a sufficiently large constant $C_1$, provided that the constant $\eps_1$ is sufficiently small and $a/M\leq\eps_1$. Moreover, the identities \eqref{moro11.7} and \eqref{moro31.77} show that
\begin{equation}\label{moro31.778}
\begin{split}
L^3_{(1)}&\geq \frac{M(1-C_1\eps_1)}{r^4}\Big(7-\frac{44M}{r}+\frac{72M^2}{r^2}\Big)(\phi^2+\psi^2)-\frac{C_1M}{r^4}\mathbf{1}_{[DM,\infty)}(r)(\phi^2+\psi^2)\\
&-\frac{C_1}{M^3}\mathbf{1}_{(c_0,r_\delta]}(r)(\phi^2+\psi^2)-\frac{C_1\delta^2M^2}{r-\r_h}\mathbf{1}_{[r'_\delta,r_\delta]}(r)(\phi^2+\psi)^2.
\end{split}
\end{equation}

The bounds \eqref{moro43} and \eqref{moro43.1} and the definitions show that 
\begin{equation}\label{moro43.5}
L^2_{(1)}=0,
\end{equation}
\begin{equation}\label{moro43.3}
\begin{split}
L^4_{(1)}&=-8\frac{\g^{22}}{r}\partial_2\big[r^{-1}f_1\big]\psi^2+O(a^2r^{-5})\big[|f_1|+r|f'_1|\big]\psi^2\\
&=\Big[-\frac{8\Delta^2}{q^2r^4}a'_1(r)+\frac{8\Delta(r^2-4Mr)}{q^2r^5}a_1(r)\Big]\psi^2+O(a^2r^{-5})\big[|a_1|+|r-\r_h||a'_1|\big]\psi^2.
\end{split}
\end{equation}
and
\begin{equation}\label{moro43.4}
\begin{split}
|L^5_{(1)}|&\lesssim \frac{aM}{r^4}|f|\big(|\phi|+|\psi|\big)\Big\{\sum_{Y\in\{E,F\}}\Big(\frac{|Y_1|}{r}+\frac{M}{r}|Y_2|+\frac{M}{r}|Y_3|\Big)+\frac{1}{r}\big(|\phi|+|\psi|\big)\Big\}\\
&+|\mathcal{N}_\phi|\big|2f_1E_2+2g_1E_3+w_1\phi\big|+|\mathcal{N}_\psi|\big|2f_1F_2+2g_1F_3+w_1\psi\big|.
\end{split}
\end{equation}

Using \eqref{moro31.77} and \eqref{moro43.3}, together with the inequalities in the last line of \eqref{ell13.9}, after possibly increasing the constant $C_1$ we have
\begin{equation}\label{moro31.79}
\begin{split}
L_{(1)}^1&+L_{(1)}^4\geq\sum_{Y\in\{E,F\}}\Big\{\frac{(2-C_1\eps_1)a_1(r)(r-r^\ast)}{r^4}(Y_1)^2+\eps_1\widetilde{w}(r)(Y_3)^2\\
&+(2-C_1\eps_1)a'_1(r)\Big(\frac{\Delta}{r^2}Y_2+\frac{2M\chi(r)}{r}Y_3\Big)^2\Big\}+\frac{8\Delta(r^2-4Mr)}{r^7}a_1(r)\psi^2\\
&+\frac{(2-C_1\eps_1)a_1(r)(r-r^\ast)}{r^4}\frac{4(\cos\theta)^2\psi^2}{(\sin\theta)^2}-C_1\frac{a^2|a_1(r)|+\eps_1r^2|r-\r_h|a'_1(r)}{r^5}(\phi^2+\psi^2).\\
\end{split}
\end{equation}

\subsubsection{Analysis in a neighborhood of the horizon} In a small neighborhood of the horizon we need to use the redshift effect. As in subsection \ref{multipliers}, we define
\begin{equation}\label{moro12.9}
\begin{split}
&X_{(2)}:=f_2(r)\partial_2+g_2(r)\partial_3,\qquad f_2(r):=-\eps_2a_2(r),\qquad g_2(r):=\eps_2 a_2(r)(1-\eps_2),\\
&w_{(2)}(r):=-2\eps_2 a_2(r)/r,\\
&m_{(2)2}=m_{(2)3}=m'_{(2)2}=m'_{(2)3}:=\eps_2M^{-2}\gamma(r),\qquad m_{(2)1}=m_{(2)4}=m'_{(2)1}=m'_{(2)4}:=0,
\end{split}
\end{equation}
where $\eps_2$ is a small positive constant (recall that $\delta=\eps_2^2M^{-2}$),
\begin{equation}\label{moro12.59}
a_2(r):=
\begin{cases}
M^{-3}(9M/4-r)^3\qquad&\text{ if }r\leq 9M/4,\\
0\qquad&\text{ if }r\geq 9M/4,
\end{cases}
\end{equation}
and $\gamma:[c_0,\infty)\to[0,1]$ is a function supported in $[c_0,17M/8]$, and satisfying $\gamma(\r_h)=1/2$ and a property similar to \eqref{other2}. 

Let $L^j_{(2)}:=L^j[X_{(2)},w_{(2)},m_{(2)},m'_{(2)}]$, $j\in\{1,2,3,4,5\}$. As in the proof of Theorem \ref{MainTheorem3}, see Lemma \ref{flux} and \eqref{other4}, the multipliers $(X_{(1)},w_{(1)},m_{(1)},m'_{(1)})$ and $(X_{(2)},w_{(2)},m_{(2)},m'_{(2)})$ can be combined to prove the following:

\begin{lemma}\label{flux2}
The constants $\eps_1,\eps_2$ can be fixed small enough such that there is a sufficiently small absolute constant $\eps_3>0$ with the property that
\begin{equation}\label{other4.9}
\begin{split}
\sum_{j=1}^4\big(L^j_{(1)}+L^j_{(2)}\big)&\geq \eps_3\sum_{Y\in\{E,F,M\}}\Big[\frac{(r-r^\ast)^2}{r^3}(Y_1/r)^2+\frac{M^2}{r^3}(Y_2)^2+\frac{M^2(r-r^\ast)^2}{r^5}(Y_3)^2\Big]\\
&+\eps_3\frac{M}{r^4}\big(\phi^2+\psi^2\big)-\eps_3^{-1}\frac{M}{r^4}\mathbf{1}_{[DM,\infty)}(r)\big(\phi^2+\psi^2\big)+\widetilde{L},\\
\end{split}
\end{equation}
where 
\begin{equation}\label{other4.99}
\begin{split}
\widetilde{L}:=&\frac{8\Delta(r^2-4Mr)}{r^7}a_1(r)\psi^2+(1-2C_1\eps_1)\mathbf{1}_{[r^\ast,\infty)}(r)\Big\{\frac{M}{r^4}\Big(7-\frac{44M}{r}+\frac{72M^2}{r^2}\Big)\psi^2\\
&+\frac{8a_1(r)(r-r^\ast)}{r^4}\frac{(\cos\theta)^2}{(\sin\theta)^2}\psi^2+\frac{2a_1(r)(r-r^\ast)}{r^4}(F_1)^2+2a'_1(r)\frac{\Delta^2}{r^4}(F_2)^2\Big\},
\end{split}
\end{equation}
provided that $a/M$ and $(\r_h-c_0)/M$ are very small relative to $\eps_3$. Moreover
\begin{equation}\label{other5.9}
\begin{split}
2(\widetilde{P}_{(1)\mu}+\widetilde{P}_{(2)\mu}) k^\mu&\geq\eps_3\sum_{Y\in\{E,F,M\}}\big[(Y_1/r)^2+(Y_2)^2(\r_h-c)/M\big]+\eps_3M^{-2}\big(\phi^2+\psi^2\big)\\
&-\eps_3^{-1}\big[(E_3)^2+(F_3)^2\big],
\end{split}
\end{equation}
along $\mathcal{N}^c_{[t_1,t_2]}$. Also
\begin{equation}\label{other15.9}
\begin{split}
2(\widetilde{P}_{(1)\mu}+\widetilde{P}_{(2)\mu}) n^\mu&\geq -\eps_3^{-1}\big\{\widetilde{e}_0+\mathbf{1}_{[8M,2DM]}(r)\big[(E_3)^2+(F_3)^2\big]\big\}\\
&-\frac{\chi_{\geq 8M}(r)(1-p)}{r^2}\partial_2(r\phi^2+r\psi^2)+\eps_3\big[(E_2)^2+(F_2)^2\big]\mathbf{1}_{(c_0,17M/8]}(r),
\end{split}
\end{equation}
and
\begin{equation}\label{other15.29}
\begin{split}
2(\widetilde{P}_{(1)\mu}&+\widetilde{P}_{(2)\mu}) n^\mu\leq \eps_3^{-1}\big\{\widetilde{e}_0+\mathbf{1}_{[8M,2DM]}(r)\big[(E_3)^2+(F_3)^2\big]\big\}\\
&-\frac{\chi_{\geq 8M}(r)(1-p)}{r^2}\partial_2(r\phi^2+r\psi^2)+\eps_3^{-1}\big[(E_2)^2+(F_2)^2\big]\mathbf{1}_{(c_0,17M/8]}(r),\\
\end{split}
\end{equation}
where
\begin{equation}\label{other9.9}
\begin{split}
\widetilde{e}_0&=\frac{(E_1)^2+(F_1)^2+(M_1)^2}{r^2}+(L\phi)^2+(L\psi)^2\\
&+\frac{M^2|r-\r_h|}{r^3}\big[(E_2)^2+(F_2)^2\big]+\frac{M^2}{r^2}\big[(E_3^2)+(F_3)^2\big]+\frac{1}{r^2}(\phi^2+\psi^2).
\end{split}
\end{equation}
Finally,
\begin{equation}\label{other15.49}
\begin{split}
\big|L^5_{(1)}&\big|+\big|L^5_{(2)}\big|\leq\frac{\eps_3^{-1}aM|r-r^\ast|}{r^5}\big(|\phi|+|\psi|\big)\\
&\times\Big\{\sum_{Y\in\{E,F\}}\Big(\frac{|Y_1|}{r}+\frac{M(|Y_2|+|Y_3|)}{r}\Big)+\frac{1}{r}\big(|\phi|+|\psi|\big)\Big\}
+\eps_3^{-1}\big[e(\phi,\mathcal{N}_\phi)+e(\psi,\mathcal{N}_\psi)\big].
\end{split}
\end{equation}
\end{lemma}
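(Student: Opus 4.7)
\medskip

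\textbf{Proof proposal for Lemma \ref{flux2}.} The plan is to mirror the Schwarzschild argument in Lemma \ref{flux} and the derivation of \eqref{other4}, now accounting for the three constituent one-forms $E,F,M$ and the small $O(a/M)$ corrections. The contributions $L^j_{(1)}$ have already been computed in \eqref{moro31.777}, \eqref{moro31.778}, \eqref{moro43.5}, and \eqref{moro43.3}. The first step is to compute $L^1_{(2)}, L^2_{(2)}, L^3_{(2)}, L^4_{(2)}$ in complete analogy with the scalar case treated in \eqref{moro13}--\eqref{moro14}; here one uses \eqref{mora4}, \eqref{waveop2}, \eqref{waveop3}, together with the metric comparison \eqref{DiffComp} and the structural bounds \eqref{ell13}, \eqref{ell13.9} to absorb the $a/M$ corrections into the same positive quadratic form summed over $Y\in\{E,F,M\}$. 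The correction term $L^4_{(2)}$ is handled by \eqref{moro43}: since $f_2$ is supported in $\{r\leq 9M/4\}$ and vanishes cubically at $r=9M/4$, the main term $-8\g^{22}\partial_2(r^{-1}f_2)/r$ generates a positive redshift-type contribution exactly as in the Schwarzschild case, up to an $O(a^2)$ error.

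For the bulk bound \eqref{other4.9}, add \eqref{moro31.79} (which already combines $L_{(1)}^1 + L_{(1)}^4$) to the $L_{(1)}^3$ bound \eqref{moro31.778} and to the analogue of \eqref{moro14} for $(X_{(2)},w_{(2)},m_{(2)},m'_{(2)})$. The parameters $\eps_1, \eps_2$ can be fixed to achieve the positivity condition \eqref{other2} exactly as in the derivation of \eqref{other4}, provided $a/M$ and $(\r_h-c_0)/M$ are small enough so that the $O(a/M)$ and $O(\r_h-c_0)$ corrections do not destroy the margin. The terms singled out into $\widetilde{L}$ are precisely those contributions of $L^1_{(1)} + L^3_{(1)} + L^4_{(1)}$, supported in $\{r\geq r^\ast\}$, that do not yet admit a sign of their own and will need to be paired with the still-to-be-defined multiplier $(X_{(4)},w_{(4)},m_{(4)},m'_{(4)})$; all remaining bulk terms, including the part of $L^3_{(1)}$ proportional to $\phi^2$ on $(c_0,r^\ast)$ using the positivity \eqref{moro30.2}, furnish the main positive quadratic form displayed in \eqref{other4.9}.

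For the null-boundary bound \eqref{other5.9} along $\mathcal{N}^c_{[t_1,t_2]}$, use the identity \eqref{mora6} (with the same $O(a^2)$ metric corrections as in \eqref{DiffComp}) applied to each of $E,F,M$, and argue exactly as in the derivation of \eqref{other5}; the extra term $-(X^\nu\D_\nu A/A)(\D_\mu A/A)\psi^2 k^\mu$ introduced by replacing $P$ with $\widetilde{P}$ contributes at most $O(M^{-2})\psi^2$ and is absorbed into the positive $\eps_3 M^{-2}(\phi^2+\psi^2)$ contribution coming from the horizon redshift piece of $m'_{(2)}$. For the two-sided estimates \eqref{other15.9}, \eqref{other15.29} on $\Sigma_t^c$, repeat the proof of \eqref{other15}--\eqref{other15.2} via \eqref{mora5}, again summed over $Y\in\{E,F,M\}$: the $(M_1)^2/r^2$ piece in $\widetilde{e}_0$ arises from the summand $Y=M$, and the correction in \eqref{moro2.1} produces only $O(r^{-2})\psi^2$ on the slice, hence is absorbed in $\widetilde{e}_0$.

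Finally, \eqref{other15.49} is essentially an application of Lemma \ref{clan0}. Since $X_{(j)}=f_j\partial_2+g_j\partial_3$ with $|f_1|\lesssim |a_1(r)|\Delta/r^2$ and $|f_2|\lesssim\eps_2 a_2(r)$ (the latter supported on $\{r\leq 9M/4\}$), the Taylor bound $|a_1(r)|\lesssim |r-r^\ast|/M$ near $r^\ast$ produced by $a_1(r^\ast)=0$ and \eqref{moro35.7} gives the factor $|r-r^\ast|/r$ in front of the quadratic term; the $\mathcal{N}_\phi,\mathcal{N}_\psi$ contributions in \eqref{moro43.1} are reorganized by Cauchy--Schwarz to match the structure $e(\phi,\mathcal{N}_\phi)+e(\psi,\mathcal{N}_\psi)$ in \eqref{outs100N}. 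The principal obstacle throughout is bookkeeping: one must simultaneously verify that all the $O(a/M)$ and $O(\r_h-c_0)$ error terms can be absorbed into the main positive quadratic form, and one must isolate $\widetilde{L}$ precisely so that it can later be cancelled by the fourth multiplier without losing positivity elsewhere.
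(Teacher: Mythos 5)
Your proposal follows essentially the same approach as the paper's (which is itself quite terse, deferring most of the work to Lemma \ref{flux} after recording \eqref{ConstOrder}, \eqref{moro31.77}, \eqref{moro43.4}, and \eqref{moro31.79}). You correctly identify the assembly: combine \eqref{moro31.79} with the $L^3_{(1)}$ bound \eqref{moro31.778} and the Kerr analogue of \eqref{moro14}, isolate the unsigned $\psi$-contributions into $\widetilde{L}$ for later cancellation by $(X_{(4)},\ldots)$, and invoke \eqref{Qy1}--\eqref{Qy2} for the boundary bounds and Lemma \ref{clan0} for \eqref{other15.49}.

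One factual slip: you assert that the main part of $L^4_{(2)}$, namely $-8\g^{22}\,\partial_2(r^{-1}f_2)\,\psi^2/r$, "generates a positive redshift-type contribution." With $f_2=-\eps_2 a_2$ one computes $\partial_2(r^{-1}f_2)>0$ on $(c_0,9M/4)$, so for $r>\r_h$ (where $\g^{22}>0$) the term is in fact \emph{negative}. This does not break the proof — the term is of magnitude $O(\eps_2\,\Delta\,M^{-4})\psi^2$ and is easily absorbed by the positive zero-order bulk piece $\gtrsim M r^{-4}\psi^2$ coming from $L^3_{(1)}$ (via \eqref{moro31.778} and \eqref{moro30.2}), precisely because $\eps_2\ll 1$ in the hierarchy \eqref{ConstOrder} — but the correct mechanism is smallness-and-absorption, not manifest positivity. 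You would also want to state the constant hierarchy $\max(a/M,(\r_h-c_0)/M)\ll\eps_3\ll\min(\eps_1,\eps_2)\ll C_1^{-1}$ up front, since every one of your absorption steps quietly relies on it.
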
 

\begin{proof}
The order of the constants to keep in mind is
\begin{equation}\label{ConstOrder}
\max\big(a/M,(\r_h-c_0)/M\big)\ll\eps_3\ll\min(\eps_1,\eps_2)\leq\max(\eps_1,\eps_2)\ll C_1^{-1}\ll 1.
\end{equation}
Most of the proof follows in the same way as in Lemma \ref{flux}, using the identities/inequalities \eqref{Qy1}--\eqref{Qy2}, \eqref{moro31.77}, \eqref{moro43.4}, and \eqref{moro31.79}

The term $\widetilde{L}$ is new, when compared to the corresponding inequality \eqref{other4} in the case of the pure wave equation. 
It is necessary to have this term because of the term $L_{(1)}^4$ in \eqref{moro43.3}, which leads to the term 
\begin{equation*}
\frac{8\Delta(r^2-4Mr)}{r^7}a_1(r)\psi^2
\end{equation*} 
in \eqref{moro31.79}. This term is clearly nonnegative if $r\leq r^\ast$ or $r\geq 4M$; however, for $r\in [r^\ast,4M]$ we need an additional multiplier to control this term. The other terms in \eqref{other4.99} are coming from corresponding terms in \eqref{moro31.79} and \eqref{moro31.77}, and their role is to help $\widetilde{L}$ become positive. We show how to control this term below.
\end{proof}

\subsubsection{The new multiplier $(X_{(4)},w_{(4)},m_{(4)},m'_{(4)})$} We define, with $a_1$ as in \eqref{moro35.7},
\begin{equation}\label{clan1}
\begin{split}
&X_{(4)}:=0,\qquad w_{(4)}=0,\qquad m_{(4)}=0,\\
&\widetilde{m}'_{(4)1}(r,\theta):=-(1-2C_1\eps_1)
\frac{8(r-r^\ast)a_1(r)\chi_{\leq 6R}(r)}{r^2}\frac{\cos\theta}{\sin\theta}\mathbf{1}_{[r^\ast,\infty)}(r),\\
&\widetilde{m}'_{(4)2}(r):=(1-2C_1\eps_1)\frac{2b(r)}{\Delta},\qquad \widetilde{m}'_{(4)3}:=0,\qquad\widetilde{m}'_{(4)4}:=0,
\end{split}
\end{equation}
for some function $b$ supported in $[r^\ast,4M]$ to be fixed. We prove the following:

\begin{lemma}\label{clan2}
Letting $L^j_{(4)}:=L^j[X_{(4)},w_{(4)},m_{(4)},m'_{(4)}]$, $j\in\{1,2,3,4,5\}$, we have
\begin{equation}\label{clan2.1}
L^1_{(4)}=L^4_{(4)}=L^5_{(4)}=0
\end{equation}
and, for some constant $C_2$ sufficiently large,
\begin{equation}\label{clan2.2}
\widetilde{L}+L^2_{(4)}+L^3_{(4)}\geq -C_2(a+|\r_h-c_0|)r^{-4}(\phi^2+\psi^2).
\end{equation}
Moreover,
\begin{equation}\label{clan2.3}
\big|2\widetilde{P}_{(4)\mu}n^\mu\big|\lesssim \eps_3^{-1}\psi^2/r^2\qquad \text{ and }\qquad 2\widetilde{P}_{(4)\mu}k^\mu=0\,\,\text{ along }\,\,\mathcal{N}^c_{[t_1,t_2]}.
\end{equation}
\end{lemma}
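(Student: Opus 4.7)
The identities $L^1_{(4)}=L^4_{(4)}=L^5_{(4)}=0$ are immediate from \eqref{moro3.2}, since each of those three terms carries a factor of $X_{(4)}^\nu$ or $w_{(4)}$. With also $m_{(4)}=0$, the correction in \eqref{moro2.1} is inert and $\widetilde P_{(4)\mu}=\tfrac14\widetilde m'_{(4)\mu}\psi^2$. The boundary bounds \eqref{clan2.3} then follow at once: $n^1=k^1=0$ kills any contribution of the axis-singular component $\widetilde m'_{(4)1}$; the radial component $\widetilde m'_{(4)2}$ is supported in $[r^\ast,4M]$, where $\chi(r)=0$ (since $\chi$ is supported in $(-\infty,5M/2]$ and $r^\ast\approx 3M$), so $\g^{23}=0$ there and the $n^\mu$-contribution vanishes; and the null hypersurface at $r=c\leq\rH<r^\ast$ lies outside the support of $\widetilde m'_{(4)2}$ entirely, so, since $\widetilde m'_{(4)3}=0$, the $k^\mu$-contribution is identically zero.

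The only negative contribution to $\widetilde L$ in \eqref{other4.99} is $\tfrac{8\Delta(r^2-4Mr)}{r^7}a_1(r)\psi^2$, which is negative precisely on $[r^\ast,4M]$, exactly the support chosen for $b$. The two nonzero components of $\widetilde m'_{(4)}$ are designed to complete two squares against the positive pieces of $\widetilde L$ and then to eliminate this single negative piece. Using $\D_1\psi=F_1+\phi A^{-1}\D_1 B$ and noting that the $\sin\theta$ factor in $\sqrt{|\g|}$ tames the $1/\sin\theta$ in $\widetilde m'_{(4)1}$ when computing its divergence, the $V^1$-contributions to $L^2_{(4)}+L^3_{(4)}$ combine with the positive $\widetilde L$-pieces $(1-2C_1\eps_1)\tfrac{2a_1(r-r^\ast)}{r^4}[(F_1)^2+4(\cos\theta/\sin\theta)^2\psi^2]\mathbf{1}_{[r^\ast,\infty)}$ into the perfect square $(1-2C_1\eps_1)\tfrac{2a_1(r)(r-r^\ast)}{r^4}\bigl(F_1-\tfrac{2\cos\theta}{\sin\theta}\psi\bigr)^2\chi_{\leq 6R}\mathbf{1}_{[r^\ast,\infty)}\geq 0$. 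Analogously, in the support of $b$ the flatness of $\chi$ gives $\g^{22}=\Delta/q^2$ and $\g^{23}=0$, so the $V^2$-terms in $L^2_{(4)}+L^3_{(4)}$ contribute a cross term $\sim 2b(r)r^{-2}\psi F_2$ and a zero-order term $\sim b'(r)r^{-2}\psi^2$. Completing the square against the positive $2a'_1\Delta^2 r^{-4}(F_2)^2$-piece leaves a residual $\psi^2$-coefficient
\begin{equation*}
(1-2C_1\eps_1)\Big[\tfrac{M(7-44M/r+72M^2/r^2)}{r^4}+\tfrac{b'(r)}{r^2}-\tfrac{(1-2C_1\eps_1)b(r)^2}{2a'_1(r)\Delta^2}\Big]+\tfrac{8\Delta(r^2-4Mr)a_1(r)}{r^7}
\end{equation*}
inside $[r^\ast,4M]$, modulo acceptable errors.

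The function $b\in C^\infty_c([r^\ast,4M])$ is now chosen to render this residual coefficient nonnegative, by integrating the corresponding Riccati-type ODE from $r=r^\ast$ with $b(r^\ast)=0$ and tapering $b$ smoothly to zero at $r=4M$. This is feasible because on $[r^\ast,4M]$ both the obstruction $\tfrac{8\Delta(r^2-4Mr)a_1}{r^7}$ and the positive pad $M/r^4$ are of comparable size $\sim M^{-3}$, while the denominator $a'_1\Delta^2\gtrsim M^4$ (by \eqref{moro35.117}) keeps the Riccati nonlinearity under control; a slight enlargement of the $M/r^4$-cushion near $r=4M$ allows the smooth taper without destroying the sign. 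The consistent choice of $b$ subject to the compact-support constraint, while retaining the sign of the residual coefficient, is the main obstacle of the proof. Once $b$ is fixed, the discarded errors fall into three controlled classes: $O(a)$-corrections from $\g^{\mu\nu}_{\mathrm{Kerr}}-\g^{\mu\nu}_{\mathrm{Sch}}$ and from the Kerr-specific pieces of $A^{-1}\D A,A^{-1}\D B$ entering $E,F,M$; mixed $\phi\psi$-cross terms of size $O(aM/r^5)|\phi\psi|$ arising from the $\phi A^{-1}\D_\mu B$ part of $\D_\mu\psi$ in $L^2_{(4)}$; and $O(|\rH-c_0|/M)r^{-4}\psi^2$ taper errors near the lower endpoint of the support of $b$. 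Summing these yields \eqref{clan2.2}.
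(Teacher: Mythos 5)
Your identification of the scaffold is correct and matches the paper's structure: the vanishing identities follow from $X_{(4)}=w_{(4)}=m_{(4)}=0$, the boundary contributions vanish because $\g^{23}=p\chi$ and $\chi\equiv 0$ on $[r^\ast,4M]$ while $n^1=k^1=0$ handles the axis-singular component, and one is led, after completing squares in the $V^1$- and $V^2$-directions, to a pointwise residual for the $\psi^2$-coefficient on $[r^\ast,4M]$ of the schematic form $b'(r)r^{-2}-\tfrac{b^2}{2a_1'\Delta^2}+\text{(given data)}$. (Your residual actually omits the positive contribution $\tfrac{4(r-r^\ast)a_1}{r^4}$ coming from $\tfrac12\D^\mu\widetilde m'_{(4)\mu}$, and the stated bound $a_1'\Delta^2\gtrsim M^4$ should read $\gtrsim M^3$; both matter because they shrink the margin you have to work with.)

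The genuine gap is the existence of $b$. You assert that integrating the Riccati inequality from $b(r^\ast)=0$ and "tapering smoothly to zero at $r=4M$" is feasible because the obstruction and the positive pad are "of comparable size $\sim M^{-3}$" and because "a slight enlargement of the $M/r^4$-cushion" allows the taper. This does not prove anything: the positive pad is a fixed quantity, not something you are allowed to enlarge, and "comparable size" is precisely the problem rather than the solution — a numerical check at, say, $r\approx 3.2M$ shows the obstruction $\tfrac{8\Delta(r^2-4Mr)a_1}{r^7}$ actually dominates the pad $\tfrac{M}{r^4}(7-44M/r+72M^2/r^2)$ in magnitude, so the sign of the full residual is decided by the $b'-b^2/\ldots$ contribution, which is exactly what has to be verified. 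Concretely, after the rescaling $r=(3+s)M$, $\widetilde b(s)=b((3+s)M)$, one must exhibit $\widetilde b$ compactly supported in $[1/10,1]$ satisfying simultaneously $\widetilde b'(s)+9(1-10s+18s^2)/(3+s)^5\geq 0$ and $|\widetilde b(s)|\leq\sqrt{2}s/16$; the first forces $\widetilde b$ to rise on $(s_1,s_2)$ with $s_{1,2}=(5\mp\sqrt7)/18$, the second caps how high it may rise, and the compact support forces it to come back to zero while still respecting the differential inequality, which requires checking that $\int_{s_2}^1 9(1-10s+18s^2)/(3+s)^5\,ds\geq\widetilde b(s_2)$. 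None of these inequalities is obvious by scaling, and your argument leaves them unverified; the paper discharges them by an explicit construction of $\widetilde b$ and a direct computation of $\widetilde b(s_2)=7^{3/2}9^{-4}$. Until such a $b$ is actually produced, \eqref{clan2.2} is not established.
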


\begin{proof} The identities in \eqref{clan2.1} are clear. The inequality in \eqref{clan2.2} is also clear in the regions $\{r\leq r^\ast\}$ and $\{r\geq 12M\}$. 

Using the formula \eqref{divop} we calculate, in the region $\{r\in[r^\ast,12M]\}$,
\begin{equation*}
\begin{split}
&\frac{1}{2}\D^\mu \widetilde{m}'_{(4)\mu}=(1-2C_1\eps_1)\Big[\frac{4(r-r^\ast)a_1(r)\chi_{\leq 6R}(r)}{q^2r^2}+\frac{b'(r)}{q^2}\Big],\\
&\psi\widetilde{m}'^\mu_{(4)}\D_\mu\psi=(1-2C_1\eps_1)\Big[-\frac{8(r-r^\ast)a_1(r)\chi_{\leq 6R}(r)}{q^2r^2}\frac{\cos\theta}{\sin\theta}\psi\D_1\psi+\frac{2b(r)}{q^2}\psi\D_2\psi\Big].
\end{split}
\end{equation*}
Therefore, in the region $\{r\in[r^\ast,12M]\}$,
\begin{equation*}
\begin{split}
L^2_{(4)}&+L^3_{(4)}+\widetilde{L}=\frac{8\Delta(r^2-4Mr)}{r^7}a_1(r)\psi^2+(1-2C_1\eps_1)\Big\{\frac{M}{r^4}\Big(7-\frac{44M}{r}+\frac{72M^2}{r^2}\Big)\psi^2\\
&+\frac{8a_1(r)(r-r^\ast)}{r^4}\frac{(\cos\theta)^2}{(\sin\theta)^2}\psi^2+\frac{2a_1(r)(r-r^\ast)}{r^4}(F_1)^2+2a'_1(r)\frac{\Delta^2}{r^4}F^2_2\Big\}\\
&+(1-2C_1\eps_1)\Big[\frac{4(r-r^\ast)a_1(r)\chi_{\leq 6R}(r))}{q^2r^2}+\frac{b'(r)}{q^2}\Big]\psi^2\\
&+(1-2C_1\eps_1)\Big[-\frac{8(r-r^\ast)a_1(r)\chi_{\leq 6R}(r)}{q^2r^2}\frac{\cos\theta}{\sin\theta}\psi\D_1\psi+\frac{2b(r)}{q^2}\psi\D_2\psi\Big].
\end{split}
\end{equation*}
Recalling \eqref{ell13.9}, we may replace $\D_1\psi$ and $\D_2\psi$ with $F_1$ and $F_2$, up to acceptable errors. Then we divide by $(1-2C_1\eps_1)$ and complete squares. For \eqref{clan2.2} it suffices to prove that
\begin{equation*}
\begin{split}
-C_2a&\leq \frac{8\Delta(r^2-4Mr)a_1(r)}{r^7(1-2C_1\eps_1)}+\frac{M}{r^4}\Big(7-\frac{44M}{r}+\frac{72M^2}{r^2}\Big)\\
&+\Big[\frac{4(r-r^\ast)a_1(r)\chi_{\leq 6R}(r))}{r^4}+\frac{b'(r)}{r^2}\Big]-\frac{b(r)^2}{2\Delta^2a'_1(r)},
\end{split}
\end{equation*}
 for any $r\in[r^\ast,12M]$, for some function $b$ supported in $[r^\ast,4M]$ to be fixed. After algebraic simplifications, it suffices to prove that, for any $r\in[r^\ast,4M]$,
\begin{equation}\label{moro56}
0\leq \frac{M}{r^4}\Big(7-\frac{44M}{r}+\frac{72M^2}{r^2}\Big)+\frac{8\Delta(r-4M)a_1(r)}{r^6(1-2C_1\eps_1)}+\Big[\frac{4(r-r^\ast)a_1(r)}{r^4}+\frac{b'(r)}{r^2}\Big]-\frac{b(r)^2}{2a'_1(r)\Delta^2}.
\end{equation}
We multiply both sides of \eqref{moro56} by $r^6/M^3$. It suffices to find a function $b$ supported in $[r^\ast,4M]$ such that, for $r\in[r^\ast,4M]$,
\begin{equation}\label{moro57}
1\lesssim\frac{r^4b'(r)}{M^3}+\Big(\frac{7r^2}{M^2}-\frac{44r}{M}+72\Big)-\frac{r^4b(r)^2}{2M^3a'_1(r)(r-2M)^2}+4a_1(r)\Big(\frac{3r^3}{M^3}-\frac{15r^2}{M^2}+\frac{16r}{M}\Big).
\end{equation}

Let
\begin{equation*}
r=(3+s)M,\qquad \widetilde{b}(s):=b((3+s)M).
\end{equation*}
Notice also that, for $s\in[0,1]$,
\begin{equation*}
\big|a_1((3+s)M)-\widetilde{a}_1(s)\big|+\big|Ma'_1((3+s)M)-\widetilde{a}'_1(s)\big|\lesssim a,
\end{equation*}
where
\begin{equation}\label{moro58}
\widetilde{a}_1(s):=\frac{5s+s^2+6\log(1+s)}{(3+s)^2},\qquad\widetilde{a}'_1(s):=\frac{33+s-12\log(1+s)-12\frac{s}{s+1}}{(3+s)^3}.
\end{equation}
For \eqref{moro57} it suffices to prove that, for $s\in[0,1]$,
\begin{equation}\label{moro59}
1\lesssim \widetilde{b}'(s)-\frac{\widetilde{b}(s)^2}{2\widetilde{a}'_1(s)(1+s)^2}+\frac{7s^2-2s+3}{(3+s)^4}+4\widetilde{a}_1(s)\frac{3s^2+3s-2}{(3+s)^3}.
\end{equation}

Notice that $\widetilde{a}'_1(s)(1+s)^2\geq 1$ for any $s\in[0,1]$. Indeed, using \eqref{moro58},
\begin{equation*}
\begin{split}
(3+s)^3\big[\widetilde{a}'_1(s)(1+s)^2-1]&=(1+s)[33+10s-11s^2+12(1+s)(s-\log(1+s))]-(3+s)^3\\
&=12(1+s)^2(s-\log(1+s))+6+16s-10s^2-12s^3\\
&\geq 0.
\end{split}
\end{equation*}
Therefore, for \eqref{moro59} it suffices to prove that, for $s\in[0,1]$,
\begin{equation}\label{moro60}
1\lesssim \widetilde{b}'(s)-\frac{\widetilde{b}(s)^2}{2}+\frac{7s^2-2s+3}{(3+s)^4}+4\widetilde{a}_1(s)\frac{3s^2+3s-2}{(3+s)^3}.
\end{equation}

Moreover, for $s\in[0,1]$,
\begin{equation*}
\begin{split}
\frac{7s^2-2s+3}{(3+s)^4}+4\widetilde{a}_1(s)\frac{3s^2+3s-2}{(3+s)^3}&=\frac{9-91s+167s^2+115s^3-24s^4}{(3+s)^5}\\
&+\frac{24(-2+3s+3s^2)[\log(1+s)-s+s^2/2]}{(3+s)^5}\\
&\geq \frac{9-91s+167s^2+91s^3}{(3+s)^5}\\
&\geq \frac{9(1-10s+18s^2)}{(3+s)^5}\mathbf{1}_{[1/10,1]}(s)+\frac{4s^2}{(3+s)^5}+10^{-10}.
\end{split}
\end{equation*}
Therefore, to prove \eqref{moro60} it suffices to find a function $\widetilde{b}$ supported in $[1/10,1]$ such that
\begin{equation}\label{moro61}
\widetilde{b}'(s)+\frac{9(1-10s+18s^2)}{(3+s)^5}\geq 0\quad\text{ and }\quad|\widetilde{b}(s)|\leq \frac{\sqrt{2}s}{16}
\end{equation}
for any $s\in[1/10,1]$. 

Notice that $1-10s+18s^2=18(s-s_1)(s-s_2)$ where $s_1=(5-\sqrt{7})/18$, $s_2=(5+\sqrt{7})/18$. We define $\widetilde{b}(s)=0$ for $s\leq s_1$ and
\begin{equation*}
\widetilde{b}(s):=\int_{s_1}^s\frac{9(10\rho-1-18\rho^2)}{3^5}\,d\rho
\end{equation*}
for $s\in[s_1,s_2]$. The desired inequalities \eqref{moro61} are easy to verify for $s\in[1/10,s_2]$, and, 
moreover, $\widetilde{b}(s_2)=7^{3/2}9^{-4}\leq 3\cdot 10^{-3}$. 

On the other hand, for $s\geq s_2$, we would like to define the function $\widetilde{b}$ decreasing, still satisfying \eqref{moro61}, and vanishing for $s\geq 1$. The only condition for this to be possible is the inequality
\begin{equation*}
\int_{s_2}^1\frac{9(1-10\rho+18\rho^2)}{4^5}\,d\rho\geq \widetilde{b}(s_2),
\end{equation*}
which is easy to verify. This completes the proof of the main inequality \eqref{clan2.2}.

The identity and the inequality in \eqref{clan2.3} follow from definitions.
\end{proof}

As a consequence of Lemma \ref{flux2} and Lemma \ref{clan2} we have:

\begin{corollary}\label{flux4}
There is a sufficiently small absolute constant $\eps_3>0$ with the property that
\begin{equation}\label{ext1}
\begin{split}
\sum_{j=1}^4\big(L^j_{(1)}+L^j_{(2)}+L^j_{(4)}\big)&\geq \eps_3\sum_{Y\in\{E,F,M\}}\Big[\frac{(r-r^\ast)^2}{r^3}(Y_1/r)^2+\frac{M^2}{r^3}(Y_2)^2+\frac{M^2(r-r^\ast)^2}{r^5}(Y_3)^2\Big]\\
&+\eps_3\frac{M}{r^4}\big(\phi^2+\psi^2\big)-\eps_3^{-1}\frac{M}{r^4}\mathbf{1}_{[DM,\infty)}(r)\big(\phi^2+\psi^2\big),
\end{split}
\end{equation}
and
\begin{equation}\label{ext2}
\begin{split}
2(\widetilde{P}_{(1)\mu}+\widetilde{P}_{(2)\mu}+\widetilde{P}_{(4)\mu}) k^\mu&\geq\eps_3\sum_{Y\in\{E,F,M\}}\big[(Y_1/r)^2+(Y_2)^2(\r_h-c)/M\big]\\
&+\eps_3M^{-2}\big(\phi^2+\psi^2\big)-\eps_3^{-1}\big[(E_3)^2+(F_3)^2\big],
\end{split}
\end{equation}
along $\mathcal{N}^c_{[t_1,t_2]}$. Moreover, with $\widetilde{e}_0$ as in \eqref{other9.9},
\begin{equation}\label{ext3}
\begin{split}
2(\widetilde{P}_{(1)\mu}&+\widetilde{P}_{(2)\mu}+\widetilde{P}_{(4)\mu}) n^\mu\geq -\eps_3^{-1}\big\{\widetilde{e}_0+\mathbf{1}_{[8M,2DM]}(r)\big[(E_3)^2+(F_3)^2\big]\big\}\\
&-\frac{\chi_{\geq 8M}(r)(1-p)}{r^2}\partial_2(r\phi^2+r\psi^2)+\eps_3\big[(E_2)^2+(F_2)^2\big]\mathbf{1}_{(c_0,17M/8]}(r),
\end{split}
\end{equation}
and
\begin{equation}\label{ext4}
\begin{split}
2(\widetilde{P}_{(1)\mu}&+\widetilde{P}_{(2)\mu}+\widetilde{P}_{(4)\mu}) n^\mu\leq \eps_3^{-1}\big\{\widetilde{e}_0+\mathbf{1}_{[8M,2DM]}(r)\big[(E_3)^2+(F_3)^2\big]\big\}\\
&-\frac{\chi_{\geq 8M}(r)(1-p)}{r^2}\partial_2(r\phi^2+r\psi^2)+\eps_3^{-1}\big[(E_2)^2+(F_2)^2\big]\mathbf{1}_{(c_0,17M/8]}(r).
\end{split}
\end{equation}
Finally,
\begin{equation}\label{ext5}
\begin{split}
\big|L^5_{(1)}&\big|+\big|L^5_{(2)}\big|+\big|L^5_{(4)}\big|\leq\frac{\eps_3^{-1}aM|r-r^\ast|}{r^5}\big(|\phi|+|\psi|\big)\\
&\times\Big\{\sum_{Y\in\{E,F\}}\Big(\frac{|Y_1|}{r}+\frac{M(|Y_2|+|Y_3|)}{r}\Big)+\frac{1}{r}\big(|\phi|+|\psi|\big)\Big\}
+\eps_3^{-1}\big[e(\phi,\mathcal{N}_\phi)+e(\psi,\mathcal{N}_\psi)\big].
\end{split}
\end{equation}
\end{corollary}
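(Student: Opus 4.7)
\medskip
\noindent\textbf{Proof plan for Corollary \ref{flux4}.}
The corollary is a bookkeeping consequence of adding the estimates from Lemma \ref{flux2} and Lemma \ref{clan2}. First I would establish the spacetime inequality \eqref{ext1}. Since $L^1_{(4)}=L^4_{(4)}=0$ by \eqref{clan2.1}, one has
\begin{equation*}
\sum_{j=1}^4\big(L^j_{(1)}+L^j_{(2)}+L^j_{(4)}\big)=\sum_{j=1}^4\big(L^j_{(1)}+L^j_{(2)}\big)+L^2_{(4)}+L^3_{(4)}.
\end{equation*}
Using \eqref{other4.9} to bound the first sum from below and then grouping the $\widetilde{L}$ term (defined in \eqref{other4.99}) with $L^2_{(4)}+L^3_{(4)}$, the inequality \eqref{clan2.2} shows
\begin{equation*}
\widetilde{L}+L^2_{(4)}+L^3_{(4)}\geq -C_2(a+|\r_h-c_0|)r^{-4}(\phi^2+\psi^2).
\end{equation*}
This error is then absorbed by the positive zeroth-order term $\eps_3 M r^{-4}(\phi^2+\psi^2)$ in \eqref{other4.9}, at the cost of requiring $a/M$ and $|\r_h-c_0|/M$ sufficiently small compared with $\eps_3$, after a harmless reduction of the constant $\eps_3$. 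This yields \eqref{ext1}.

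Next I would handle the boundary inequalities \eqref{ext2}, \eqref{ext3}, \eqref{ext4}. The contribution of the new multiplier on the horizon-like surface $\mathcal{N}^c_{[t_1,t_2]}$ vanishes identically by the second identity in \eqref{clan2.3}, so \eqref{ext2} is immediate from \eqref{other5.9}. For the spacelike slice $\Sigma_t^c$, the bound $|2\widetilde{P}_{(4)\mu} n^\mu|\lesssim \eps_3^{-1}\psi^2/r^2$ in \eqref{clan2.3} shows that the $\widetilde{P}_{(4)}$ contribution is dominated by $\eps_3^{-1}\widetilde{e}_0$ (recall the $r^{-2}\psi^2$ entry in the definition \eqref{other9.9} of $\widetilde{e}_0$) and can therefore be folded into the right-hand sides of \eqref{other15.9} and \eqref{other15.29} after adjusting the constant $\eps_3$. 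This produces \eqref{ext3} and \eqref{ext4}.

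Finally, \eqref{ext5} is essentially free: since $X_{(4)}=0$, $w_{(4)}=0$, the definition of $L^5$ in \eqref{moro3.2} gives $L^5_{(4)}=0$ (this is part of \eqref{clan2.1}), so the required bound reduces to \eqref{other15.49}. There is no genuine obstacle in this step; the only care needed throughout the argument is the ordering of constants, schematically
\begin{equation*}
\max(a/M,|\r_h-c_0|/M)\ll \eps_3\ll \min(\eps_1,\eps_2)\ll C_1^{-1}\ll 1,
\end{equation*}
as in \eqref{ConstOrder}, which guarantees that all error terms generated by the fourth multiplier can be absorbed without destroying the coercivity established in Lemma \ref{flux2}. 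In summary, the proof is an additive superposition of the two lemmas combined with a smallness argument to absorb the $C_2(a+|\r_h-c_0|)r^{-4}$ error; the serious work has already been done in the completion-of-squares computation inside Lemma \ref{clan2}.
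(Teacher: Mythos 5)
Your proposal is correct and matches the paper's intent exactly: the paper does not spell out a proof, merely remarking that the corollary is a consequence of Lemma~\ref{flux2} and Lemma~\ref{clan2}, and your bookkeeping is precisely how one realizes that remark. The key observations — $L^1_{(4)}=L^4_{(4)}=L^5_{(4)}=0$ so the new multiplier only modifies the $L^2+L^3$ piece and the $n^\mu$-boundary term, the vanishing of $2\widetilde{P}_{(4)\mu}k^\mu$, the absorption of the $C_2(a+|\r_h-c_0|)r^{-4}(\phi^2+\psi^2)$ error into the positive $\eps_3 Mr^{-4}(\phi^2+\psi^2)$ term via the hierarchy $\max(a/M,|\r_h-c_0|/M)\ll\eps_3$, and the absorption of $|2\widetilde{P}_{(4)\mu}n^\mu|\lesssim\eps_3^{-1}\psi^2/r^2$ into the $\widetilde e_0$ term — are all correct and complete.
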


These inequalities should be compared with the inequalities \eqref{other4} and the corresponding inequalities in Lemma \ref{flux}.

\subsubsection{Outgoing energies} Finally, as in subsection \ref{outgoingEn}, we define $(X_{(3)},w_{(3)},m_{(3)},m'_{(3)})$ by
\begin{equation}\label{outs.9}
\begin{split}
&X_{(3)}:=f_3\partial_2+\Big(\frac{f_3}{1-\widetilde{p}}+g_3\Big)\partial_3,\qquad w_{(3)}:=\frac{2f_3}{r},\qquad m'_{(3)}:=m_{(3)},\\
&m_{(3)1}:=m_{(3)4}:=0,\quad m_{(3)2}:=\frac{2h_3}{r(1-\widetilde{p})},\quad m_{(3)3}:=-\frac{2h_3}{r},
\end{split}
\end{equation}
where $\widetilde{p}:=2M/r$, and $f_3,g_3$ are defined by
\begin{equation}\label{outs40.7}
f_3(r):=\eps_4\chi_{\geq 8M}(r)e^{\beta(r)},\qquad g_3(r):=\int_{r}^\infty \Big[\rho(s)+\frac{\eps_4M^2}{s^3}f_3(s)\Big]\,ds,
\end{equation}
where 
\begin{equation}\label{outs50.7}
\beta(8M):=0,\qquad \beta'(r):=\Big(\frac{4M}{r^2}+\frac{1}{r}\Big)\big(1-\chi_{\geq C_4^4M}(r)\big)+\frac{\alpha}{r}\chi_{\geq C_4^4M}(r),
\end{equation}
and
\begin{equation}\label{outs51.7}
\rho(r):=\delta M^{-1}\Big[\chi_{\geq C_4M}(r)+\chi_{\geq 4C_4^4M}(r)\Big(C_4^7e^{\beta(r)}\frac{M^3}{r^3}-1\Big)\Big].
\end{equation}
The constants $\eps_4,C_4$ satisfy $\eps_4=\eps_3^2$ and $C_4\geq \eps_4^{-4}\alpha^{-1}(2-\alpha)^{-1}$, while $\delta\in[10^{-4}C_4^{-3},10^4C_4^{-3}]$ is such that $\int_{C_4M}^\infty\rho(s)\,ds=C_4$. Recall \eqref{outs52},
\begin{equation}\label{Alx3}
e^{\beta(r)}\approx \frac{r}{M}\text{ if }r\leq 10 C_4^4M\quad \text{ and }\quad e^{\beta (r)}\approx C_4^4\Big(\frac{r}{C_4^4M}\Big)^\alpha\text{ if }r\geq (1/10) C_4^4M.
\end{equation}
Notice the additional term $M^2s^{-3}f_3(s)$ in the definition of the function $g_3$; this term is needed in order to be able to estimate the contributions of the new terms containing the small coefficient $a$, in a way that is uniform as $\alpha\to 0$ or $\alpha\to 2$.

Also let
\begin{equation}\label{outs3.7}
H_3:=(1-\widetilde{p})f'_3-\frac{2Mf_3}{r^2}-(1-\widetilde{p})^2\rho-\frac{\eps_4M^2f_3}{r^3}(1-\widetilde{p})^2,\qquad h_3:=H_3\cdot (1-\widetilde{\alpha}),
\end{equation}
where $\widetilde{\alpha}:=(2-\alpha)/10$. Recall the bounds \eqref{outs41} and \eqref{outs41.9},
\begin{equation}\label{outs41.7}
\begin{split}
\beta(r)\in[-10,0]\text{ and }M\beta'(r)\in[1/10,10]&\qquad\text{ if }r\in(c,8M],\\
\max\Big(\frac{\alpha}{100r},\frac{4M}{r^2}+\frac{1}{r}\mathbf{1}_{[8M,C_4M]}(r)\Big)\leq \beta'(r)\leq \frac{2}{r}&\qquad\text { if }r\in[8M,\infty),\\
\rho(r)=0\text{ and }g_3(r)\in[C_4/2,2C_4]&\qquad\text{ if }r\leq C_4M,\\
\rho(r)\leq \frac{\eps_4}{100}\beta'(r)e^{\beta(r)}\text{ and }\rho'(r)\leq\frac{\eps_4M}{100r^3}e^{\beta(r)}&\qquad\text{ if }r\geq C_4M,\\
\frac{e^\beta M^2}{r^2}\leq g_3(r)\leq\frac{C_4^{10}e^\beta M^2}{r^2}&\qquad\text{ if }r\geq C_4M,\\
(1-2\widetilde{\alpha})H_3(r)-rH'_3(r)\geq 0&\qquad\text{ if }r\in [16M,\infty),
\end{split}
\end{equation}
\begin{equation}\label{outs41.97}
\begin{split}
&g'_3=-\rho-\eps_4M^2r^{-3}f_3,\\
&\big|H_3-(1-\widetilde{p})f'_3\big|\leq\frac{(2+\eps_4)Mf_3}{r^2}+\rho,\\
&e^{\beta(r)}\in[r/(100M),r^2/M^2]\qquad\text{ for }r\in (c,C_4M],
\end{split}
\end{equation}
and
\begin{equation}\label{outs42.7}
\begin{split}
\frac{2f_3-rf'_3}{r}&=\eps_4 e^{\beta}\big[(2/r-\beta')\chi_{\geq 8M}-\chi'_{\geq 8M}\big],\\
\frac{6Mf_3}{r^4}-\frac{2Mf'_3}{r^3}+\frac{(1-\widetilde{p})^2g''_3}{r}+\frac{4M(1-\widetilde{p})g'_3}{r^3}&\geq\frac{\eps_4 M}{100r^4}e^\beta\chi_{\geq 8M}-\frac{2\eps_4 M}{r^3}e^\beta\chi'_{\geq 8M}.
\end{split}
\end{equation}

Notice that
\begin{equation}\label{Alx1}
\g^{33}=-\frac{r^2+a^2}{\Delta}+O(a^2Mr^{-3})\qquad\text{ if }r\geq 5M/2.
\end{equation}

\begin{proof}[Proof of Theorem \ref{MainTheorem}] Let $L^j_{(3)}:=L^j[X_{(3)},w_{(3)},m_{(3)},m'_{(3)}]$, $j\in\{1,2,3,4,5\}$. As in the proof of \eqref{outs7}, we have 
\begin{equation*}
L^1_{(3)}=\sum_{Y\in \{E,F,M\}}\big[K^{11}_{(3)}(Y_1)^2+K^{22}_{(3)}(Y_2)^2+K^{33}_{(3)}(Y_3)^2+2K^{23}_{(3)}Y_2Y_3\big],
\end{equation*}
where, with $O':=O[a^2r^{-2}(f_3/r+f'_3)]$,
\begin{equation*}
\begin{split}
&K_{(3)}^{11}=\frac{-f'_3(r)}{q^2}+w_{(3)}(r)\g^{11}=\frac{2f_3-rf'_3}{rq^2},\\
&K_{(3)}^{22}=\frac{-f_3(r)(2r-2M)+f'_3(r)\Delta}{q^2}+w_{(3)}(r)\g^{22}=(1-\widetilde{p})f'_3-\frac{2Mf_3}{r^2}+O',\\
&K_{(3)}^{33}=-f_3(r)\partial_2\g^{33}-f'_3(r)\g^{33}-\frac{2rf_3(r)\g^{33}}{q^2}+w_{(3)}(r)\g^{33}= \frac{f'_3}{1-\widetilde{p}}-\frac{2Mf_3}{r^2(1-\widetilde{p})^2}+O',\\
&K_{(3)}^{23}=\Big(\frac{f_3}{1-\widetilde{p}}+g_3\Big)'\frac{\Delta}{q^2}=f'_3-\frac{2Mf_3}{r^2(1-\widetilde{p})}-\rho(1-\widetilde{p})-\frac{\eps_4M^2 f_3}{r^3}(1-\widetilde{p})+O'.
\end{split}
\end{equation*}
Therefore
\begin{equation*}
\begin{split}
L^1_{(3)}&\geq \sum_{Y\in \{E,F,M\}}\Big\{\frac{2f_3-rf'_3}{rq^2}(Y_1)^2+H_3\Big[Y_2+\frac{Y_3}{1-\widetilde{p}}\Big]^2+\Big[\rho+\frac{\eps_4M^2f_3}{2r^3}\Big][(1-\widetilde{p})^2(Y_2)^2+(Y_3)^2]\Big\}\\
&-aMr^{-3}e^{\beta(r)}\chi_{\geq 5M}(r)[(Y_2)^2+(Y_3)^2].
\end{split}
\end{equation*}
Also, using also \eqref{divop}, \eqref{waveop}, the definitions \eqref{moro3.2}, and Lemma \ref{clan0},
\begin{equation*}
\begin{split}
L^2_{(3)}&\geq \frac{2h_3}{r}\phi\Big[\D_2\phi+\frac{\D_3\phi}{1-\widetilde{p}}\Big]+\frac{2h_3}{r}\psi\Big[\D_2\psi+\frac{\D_3\psi}{1-\widetilde{p}}\Big]\\
&-aMr^{-4}e^{\beta(r)}\chi_{\geq 5M}(r)\big[|\phi||\D_2\phi|+|\phi||\D_3\phi|+|\psi||\D_2\psi|+|\psi||\D_3\psi|\big],
\end{split}
\end{equation*}
\begin{equation*}
L^3_{(3)}\geq (\phi^2+\psi^2)\Big[\frac{h_3}{r^2}+\frac{h'_3}{r}+\frac{2Mf_3}{r^4}-\frac{2Mf'_3}{r^3}-\frac{(1-\widetilde{p})f''_3}{r}\Big]-(\phi^2+\psi^2)ar^{-4}e^{\beta(r)}\chi_{\geq 5M}(r),
\end{equation*}
and
\begin{equation*}
L^4_{(3)}\geq \frac{8(1-\widetilde{p})}{r^3}(f_3-rf'_3)\psi^2-\psi^2ar^{-4}e^{\beta(r)}\chi_{\geq 5M}(r).
\end{equation*}
We combine now the $M_2^2$ term in the right-hand side of $L_{(3)}^1$ and $L^4_{(3)}$. Recalling also the definition and \eqref{ell13.9} we have $(M_2)^2\geq 4r^{-2}\psi^2-(\phi^2+\psi^2)aMr^{-4}$. Therefore, 
\begin{equation*}
H_3(M_2)^2+L^4_{(3)}\geq -(\phi^2+\psi^2)ar^{-4}e^{\beta(r)}\chi_{\geq 5M}(r),
\end{equation*}
using the second inequality in \eqref{outs41.97} and the definitions.

We add up the estimates above and complete the square to conclude that
\begin{equation*}
\begin{split}
L^1_{(3)}&+L^2_{(3)}+L^3_{(3)}+L^4_{(3)}\geq \sum_{Y\in\{E,F,M\}}\frac{2f_3-rf'_3}{2r^3}(Y_1)^2\\
&+H_3\Big(E_2+\frac{E_3}{1-\widetilde{p}}+\frac{(1-\widetilde{\alpha})\phi}{r}\Big)^2+H_3\Big(F_2+\frac{F_3}{1-\widetilde{p}}+\frac{(1-\widetilde{\alpha})\psi}{r}\Big)^2\\
&+\Big[\rho+\frac{\eps_4M^2f_3}{2r^3}\Big][(1-\widetilde{p})^2(E_2)^2+(E_3)^2+(1-\widetilde{p})^2(F_2)^2+(F_3)^2]\\
&+(\phi^2+\psi^2)\Big[\frac{(\widetilde{\alpha}-\widetilde{\alpha}^2)H_3-\widetilde{\alpha}rH'_3}{r^2}+\frac{H'_3}{r}+\frac{2Mf_3}{r^4}-\frac{2Mf'_3}{r^3}-\frac{(1-\widetilde{p})f''_3}{r}\Big]\\
&-\eps_3^{-1}ar^{-4}e^{\beta(r)}\chi_{\geq 5M}(r)(\phi^2+\psi^2).
\end{split}
\end{equation*}
Combining this with \eqref{ext1} and estimating as in the proof of Lemma \ref{BigBounds} we conclude that
\begin{equation}\label{Alx10}
\begin{split}
\sum_{j=1}^4\big(L^j_{(1)}&+L^j_{(2)}+L^j_{(4)}+L^j_{(3)}\big)\geq \sum_{Y\in\{E,F,M\}}\eps_4^2\Big(\frac{e^{\beta}(2-r\beta')}{r}+\frac{100}{r}\Big)\frac{(r-r^\ast)^2}{r^2}\frac{(Y_1)^2}{r^2}\\
&+\eps_4^2\Big(\frac{\widetilde{\alpha}^2e^{\beta}\beta'}{r^2}+\frac{M e^{\beta}}{r^4}\Big)\big(\phi^2+\psi^2\big)+\sum_{Y\in\{E,F\}}\eps_4^2\frac{M^2e^{\beta}}{100r^3}\Big[(Y_2)^2+\frac{(r-r^\ast)^2}{r^2}(Y_3)^2\Big]\\
&+\eps_4^2e^{\beta}\beta'\Big[\Big(E_2+\frac{E_3}{1-\widetilde{p}}+\frac{(1-\widetilde{\alpha})\phi}{r}\Big)^2+\Big(F_2+\frac{F_3}{1-\widetilde{p}}+\frac{(1-\widetilde{\alpha})\psi}{r}\Big)^2\Big],
\end{split}
\end{equation}
provided that $D$ is taken large enough and $\eps_4$ is sufficiently small. 

Moreover, using Lemma \ref{clan0},
\begin{equation*}
\begin{split}
|L^5_{(3)}| &\leq \frac{aM}{r^4}\eps_4e^{\beta}\chi_{\geq 8M}\big(|\phi|+|\psi|\big)\Big\{\sum_{Y\in\{E,F\}}\frac{|Y_1|+M|Y_2|+M|Y_3|}{r}+\frac{1}{r}\big(|\phi|+|\psi|\big)\Big\}\\
&+e^{\beta}e(\phi,\mathcal{N}_\phi)+e^{\beta}e(\psi,\mathcal{N}_{\psi}).
\end{split}
\end{equation*}
Combining this with \eqref{ext5}, \eqref{Alx10}, and \eqref{ell13.9}, we obtain the final lower bound on the space-time term, for some small constant $\eps_5=\eps_5(\alpha)$,
\begin{equation}\label{Alx12}
\begin{split}
\sum_{j=1}^5\big(L^j_{(1)}&+L^j_{(2)}+L^j_{(4)}+L^j_{(3)}\big)\geq \eps_5e^{\beta}\Big\{\frac{(r-r^\ast)^2}{r^2}\frac{(\partial_1\phi)^2+(\partial_1\psi)^2+(\psi/\sin\theta)^2}{r^3}\\
&+\frac{M^2}{r^3}\big[(\partial_2\phi)^2+(\partial_2\psi)^2\big]+\frac{M^2(r-r^\ast)^2}{r^5}\big[(\partial_3\phi)^2+(\partial_3\psi)^2\big]\\
&+\frac{\phi^2+\psi^2}{r^3}+\frac{(L\phi)^2+(L\psi)^2}{r}\Big\}-e^{\beta}\big[e(\phi,\mathcal{N}_\phi)+e(\psi,\mathcal{N}_{\psi})\big].
\end{split}
\end{equation}

We consider now the contribution of $\widetilde{P}_{(3)\mu}n^\mu$. Using \eqref{Qy1} and the definitions we write
\begin{equation*}
\begin{split}
2\widetilde{P}_{(3)\mu}n^\mu&=2Q_{\mu\nu}X_{(3)}^\nu n^\mu+w_{(3)}(\phi E_\mu+\psi F_\mu)n^\mu-\frac{n^\mu\D_\mu w_{(3)}}{2}(\phi^2+\psi^2)\\
&+\frac{n^\mu}{2}(m_{(3)\mu}\phi^2+m'_{(3)\mu}\psi^2)-2\frac{X_{(3)}^\nu\D_\nu A}{A}\frac{n^\mu\D_\mu A}{A}\psi^2\\
&=\frac{m_{(3)3}(-\g^{33})}{2}(\phi^2+\psi^2)+\sum_{Y\in\{E,F,M\}}\Big[\frac{(Y_1)^2}{q^2}\Big(\frac{f_3}{1-\widetilde{p}}+g_3\Big)+\frac{\Delta(Y_2)^2}{q^2}\Big(\frac{f_3}{1-\widetilde{p}}+g_3\Big)\\
&+(Y_3)^2(-\g^{33})\Big(\frac{f_3}{1-\widetilde{p}}+g_3\Big)+2Y_2Y_3(-\g^{33})f_3\Big]+\frac{2f_3}{r}(-\g^{33})(\phi E_3+\psi F_3).
\end{split}
\end{equation*}
As before, the main point is that the function $g_3$ is extremely large when $r$ is small. We can combine this last identity with the bounds \eqref{ext3} and \eqref{ext4}, as in the proof of Lemma \ref{BigBounds} to conclude that, for any $t\in[0,T]$,
\begin{equation}\label{Alx11}
\begin{split}
\int_{\Sigma_{t}^c}2\big[\widetilde{P}_{(1)\mu}+\widetilde{P}_{(2)\mu}+\widetilde{P}_{(3)\mu}+\widetilde{P}_{(4)\mu} \big]n_0^\mu\,d\mu_{t}\approx_{\alpha} \int_{\Sigma_{t}^c}&e^\beta\big[e(\phi)^2+e(\psi)^2\big]\,d\mu_t.
\end{split}
\end{equation}

Finally, using \eqref{Qy2}, the contribution of $\widetilde{P}_{(3)\mu}k^\mu$ along $\mathcal{N}^c_{[0,T]}$ is 
\begin{equation*}
\begin{split}
2\widetilde{P}_{(3)\mu}k^\mu&=2Q_{\mu\nu}X_{(3)}^\nu k^\mu+w_{(3)}(\phi E_\mu+\psi F_\mu)k^\mu-\frac{k^\mu\D_\mu w_{(3)}}{2}(\phi^2+\psi^2)\\
&+\frac{k^\mu}{2}(m_{(3)\mu}\phi^2+m'_{(3)\mu}\psi^2)-2\frac{X_{(3)}^\nu\D_\nu A}{A}\frac{k^\mu\D_\mu A}{A}\psi^2\\
&=\sum_{Y\in\{E,F\}}\big[2g_3(c)\g^{23}(Y_3)^2+2Y_2Y_3g_3(c)\g^{22}\big].
\end{split}
\end{equation*}
Combining with \eqref{ext2} we obtain
\begin{equation}\label{Alx14}
2\big[\widetilde{P}_{(1)\mu}+\widetilde{P}_{(2)\mu}+\widetilde{P}_{(3)\mu}+\widetilde{P}_{(4)\mu}\big]k^\mu\geq 0\qquad \text{ along }\mathcal{N}^c_{[0,T]}.
\end{equation}
The theorem follows from \eqref{Alx12}, \eqref{Alx11}, \eqref{Alx14}, and the divergence identity \eqref{red2}.
\end{proof}

\section{Proof of Corollary \ref{MainCoro}}\label{CoRoProof}

In this section we provide a proof of Corollary \ref{MainCoro}. The main issue is the degeneracy of the weights in the bulk term at $r=r^\ast$. We compensate for this by losing derivatives. More precisely:

\begin{lemma}\label{NW0}
Assume that $(\phi,\psi)\in C^k([0,T]:\mathbf{H}^{6-k}(\Sigma_t^{c_0}))$, $k\in[0,6]$, is a solution of the system \eqref{asump0} with $\mathcal{N}_\phi=\mathcal{N}_\psi=0$. Then
\begin{equation}\label{NW1}
\begin{split}
\mathcal{BB}_\alpha^{c_0}(t_1,t_2)+\sum_{k=0}^2\int_{\Sigma_{t_2}^{c_0}}\frac{r^\alpha}{M^{\alpha}}\big[e(\phi_k)^2+e(\psi_k)^2\big]\,d\mu_t
&\lesssim_\alpha \sum_{k=0}^2\int_{\Sigma_{t_1}^{c_0}}\frac{r^\alpha}{M^{\alpha}}\big[e(\phi_k)^2+e(\psi_k)^2\big]\,d\mu_t,
\end{split}
\end{equation}
for any $\alpha\in(0,2)$ and any $t_1\leq t_2\in[0,T]$, where $\phi_k:=M^k\T^k\phi$, $\psi_k:=M^k\T^k\psi$, and
\begin{equation}\label{NW2}
\begin{split}
\mathcal{BB}_\alpha^{c_0}(t_1,t_2):=\int_{\mathcal{D}^{c_0}_{[t_1,t_2]}}&\frac{r^\alpha}{M^{\alpha}}\Big\{\frac{|\partial_1\phi|^2+|\partial_1\psi|^2+\psi^2(\sin\theta)^{-2}}{r^3}+\frac{1}{r}\big[(L\phi)^2+(L\psi)^2\big]\\
&+\frac{1}{r^3}\big(\phi^2+\psi^2\big)+\frac{M^{2}}{r^{3}}\big[(\partial_2\phi)^2+(\partial_2\psi)^2+(\partial_3\phi)^2+(\partial_3\psi)^2\big]\Big\}\,d\mu.
\end{split}
\end{equation}
\end{lemma}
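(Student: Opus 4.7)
The plan is to commute \eqref{asump0} with the Killing field $\T$, apply Theorem \ref{MainTheorem} at three levels, and then upgrade the degenerate bulk $\mathcal{B}_\alpha^{c_0}$ to the nondegenerate bulk $\mathcal{BB}_\alpha^{c_0}$ by a coupled coercivity estimate localized near the trapped hypersurface $\{r=r^\ast\}$.

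Since $\T$ is Killing and $A,B$ are $\T$-invariant, \eqref{asump0} commutes with $\T$, so $(\phi_k,\psi_k) := (M^k\T^k\phi, M^k\T^k\psi)$ solves the same homogeneous system for $k=0,1,2$. Applying Theorem \ref{MainTheorem} at each level and summing yields
\begin{equation*}
\sum_{k=0}^{2}\mathcal{B}_\alpha^{c_0}[\phi_k,\psi_k](t_1,t_2) + \sum_{k=0}^{2}\int_{\Sigma_{t_2}^{c_0}}\frac{r^\alpha}{M^\alpha}\big[e(\phi_k)^2+e(\psi_k)^2\big]\,d\mu_t \lesssim_\alpha \sum_{k=0}^{2}\int_{\Sigma_{t_1}^{c_0}}\frac{r^\alpha}{M^\alpha}\big[e(\phi_k)^2+e(\psi_k)^2\big]\,d\mu_t,
\end{equation*}
where $\mathcal{B}_\alpha^{c_0}[\phi_k,\psi_k]$ denotes the bulk \eqref{outs102.9} with $(\phi,\psi)$ replaced by $(\phi_k,\psi_k)$. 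The outgoing-energy term at $t_2$ in \eqref{NW1} is thus in place; only $\mathcal{BB}_\alpha^{c_0}(t_1,t_2)$ remains. Inspecting \eqref{NW2}, the pieces $(L\phi)^2/r$, $\phi^2/r^3$, $M^2(\partial_2\phi)^2/r^3$ and their $\psi$-analogues already appear nondegenerately in $\mathcal{B}_\alpha^{c_0}[\phi_0,\psi_0]$, and $M^2(\partial_3\phi)^2/r^3 = \phi_1^2/r^3$, $M^2(\partial_3\psi)^2/r^3 = \psi_1^2/r^3$ are precisely the zeroth-order entries of $\mathcal{B}_\alpha^{c_0}[\phi_1,\psi_1]$. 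The only remaining contributions are
\begin{equation*}
\mathcal{I} := \int_{\mathcal{D}_{[t_1,t_2]}^{c_0}}\frac{r^\alpha}{M^\alpha}\cdot\frac{(\partial_1\phi)^2+(\partial_1\psi)^2+\psi^2(\sin\theta)^{-2}}{r^3}\,d\mu,
\end{equation*}
which differ from entries of $\mathcal{B}_\alpha^{c_0}[\phi_0,\psi_0]$ only by the factor $(r-r^\ast)^2/r^2$. Outside any fixed neighborhood $\mathcal{U}=\{|r-r^\ast|\le \delta_0 M\}$ of the trapped set this factor is bounded below, so $\mathcal{I}\big|_{\mathcal{D}\setminus\mathcal{U}}\lesssim\mathcal{B}_\alpha^{c_0}[\phi_0,\psi_0]$.

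To estimate $\mathcal{I}\big|_{\mathcal{U}}$, pick a smooth cutoff $\zeta$ supported in $\mathcal{U}$, equal to $1$ on a smaller neighborhood. Multiplying the first equation of \eqref{asump0} by $\zeta\,\phi\,r^\alpha/M^\alpha$ and integrating by parts over $\mathcal{D}_{[t_1,t_2]}^{c_0}$ isolates the principal term
\begin{equation*}
-\int\zeta\,\frac{r^\alpha}{M^\alpha}\Big[\g^{11}(\partial_1\phi)^2+\g^{22}(\partial_2\phi)^2+\g^{33}(\partial_3\phi)^2+2\g^{23}\partial_2\phi\,\partial_3\phi\Big]\,d\mu,
\end{equation*}
modulo (i) boundary pieces on $\Sigma_{t_1}^{c_0},\Sigma_{t_2}^{c_0}$ absorbed by the outgoing energies of $\phi_0,\phi_1$, and (ii) weight/cutoff commutators supported where $|r-r^\ast|\asymp\delta_0 M$, outside the degeneracy and already controlled. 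Since $r\approx 3M$ and $a\ll M$ in $\mathcal{U}$, one has $\g^{11}\approx (9M^2)^{-1}$ and $\g^{22}\approx 1/3$, giving a nondegenerate coercive contribution $\gtrsim \int\zeta\frac{r^\alpha}{M^\alpha}\big[(\partial_1\phi)^2/M^2+(\partial_2\phi)^2\big]d\mu$; the indefinite $\g^{33}(\partial_3\phi)^2$ and the mixed term are moved to the right and dominated by $(\partial_3\phi)^2=\phi_1^2/M^2$, hence by the zeroth-order entry of $\mathcal{B}_\alpha^{c_0}[\phi_1]$. The lower-order potentials and the coupling $(\D^\mu B/A)\D_\mu\psi$ in \eqref{asump0} are all $O(aM/r^3)$ times a first derivative or an $r^{-1}$-bounded pointwise quantity, and the smallness $a\le\overline{\varep} M$ makes them absorbable into the already-controlled first-derivative pieces of $\phi,\psi$. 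This delivers the bound on $\int_\mathcal{U}\zeta\frac{r^\alpha}{M^\alpha}(\partial_1\phi)^2/r^3$. The parallel identity, obtained by multiplying the second equation of \eqref{asump0} by a matching multiplier $\zeta\,\psi$ with weight $r^{\alpha-1}/M^\alpha$, delivers both the $(\partial_1\psi)^2$ bound and -- via the singular $4\psi/(r^2\sin^2\theta)$ contribution of the potential $-(\D^\mu A\D_\mu A+\D^\mu B\D_\mu B)/A^2\,\psi$, cf. \eqref{eq:modelS} -- the angular term $\int_\mathcal{U}\zeta\,r^\alpha\psi^2/(M^\alpha r^3\sin^2\theta)$.

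\textbf{Main obstacle.} The $\T$-commutation is routine; the real work is the pair of coupled coercivity identities in $\mathcal{U}$. One must (i) demote the indefinite $\g^{33}(\partial_3\phi)^2$ to a zeroth-order expression in $\phi_1$ (nondegenerate in $\mathcal{B}_\alpha^{c_0}[\phi_1,\psi_1]$), and (ii) verify that the $\phi$-$\psi$ coupling mediated by $\D B/A$ closes symmetrically between the two identities -- which is true only thanks to the smallness $a\le\overline{\varep} M$. Everything else is bookkeeping of $r$- and $M$-weights that, near trapping, are uniformly comparable to fixed powers of $M$.
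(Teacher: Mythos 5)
Your proposal is correct and follows essentially the same strategy as the paper: commute the system with $\T$, apply Theorem \ref{MainTheorem} at each level, and then upgrade the degenerate bulk near $\{r=r^\ast\}$ by multiplying the equations by $(\phi,\psi)$ (times a cutoff) and integrating by parts to produce the coercive angular pieces, using Lemma \ref{GeneralLemma}(iii) to turn $(\partial_1\psi)^2+\tfrac{4\cos^2\theta}{\sin^2\theta}\psi^2$ into control of $\psi^2/\sin^2\theta$, and the smallness of $a/M$ to absorb the $\D B/A$ cross-coupling. The one cosmetic difference is that the paper isolates the angular operator and packs everything else (including $\partial_3^2\phi$, $\partial_2\partial_3\phi$) into a source $F_\phi$ estimated in $L^2$ via \eqref{NW16} --- hence the $k=2$ commutation --- whereas you retain the full first-order quadratic form $\D^\mu\phi\D_\mu\phi$ so that $\partial_3$ appears only to first order and is absorbed by $\phi_1$; both yield \eqref{NW1}.
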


Assuming Lemma \ref{NW0}, it is not hard to complete the proof of Corollary \ref{MainCoro}

\begin{proof}[Proof of Corollary \ref{MainCoro}] We prove the estimate in two steps. Notice first that the inequality \eqref{NW2} is equivalent to
\begin{equation*}
\begin{split}
\int_{t_1}^{t_2}\Big(\int_{\Sigma_{s}^{c_0}}\frac{r^{\alpha-1}}{M^{\alpha}}\big[e(\phi)^2+e(\psi)^2\big]\,d\mu_s\Big)\,ds&+\sum_{k=0}^2\int_{\Sigma_{t_2}^{c_0}}\frac{r^\alpha}{M^{\alpha}}\big[e(\phi_k)^2+e(\psi_k)^2\big]\,d\mu_t\\
&\lesssim_\alpha \sum_{k=0}^2\int_{\Sigma_{t_1}^{c_0}}\frac{r^\alpha}{M^{\alpha}}\big[e(\phi_k)^2+e(\psi_k)^2\big]\,d\mu_t,
\end{split}
\end{equation*}
for any $t_1\leq t_2\in[0,T]$ and $\alpha\in(0,2)$. Let
\begin{equation}\label{NW1.5}
\begin{split}
I_{\beta,l}(s):=\sum_{k=0}^l\int_{\Sigma_{s}^{c_0}}\frac{r^\beta}{M^{\beta}}\big[e(\phi_k)^2+e(\psi_k)^2\big]\,d\mu_s.
\end{split}
\end{equation}
Therefore, for any $\alpha\in (0,2)$, $l\in\{0,1,2\}$, and $t_1\leq t_2\in[0,T]$, we have
\begin{equation}\label{NW4}
\begin{split}
I_{\alpha,l+2}(t_2)+\int_{t_1}^{t_2}\frac{1}{M}I_{\alpha-1,l}(s)\,ds\lesssim_\alpha I_{\alpha,l+2}(t_1).
\end{split}
\end{equation}

We apply \eqref{NW4} first with $\alpha$ close to $2$ and $l=2,4$; the result is
\begin{equation*}
\begin{split}
&\int_{0}^{T}\frac{1}{M}I_{\alpha-1,2}(s)\,ds\lesssim_\alpha I_{\alpha,4}(0)\quad\text{ and }\quad I_{\alpha-1,2}(s')\lesssim_\alpha I_{\alpha-1,2}(s)\quad\text{ if }s\leq s'.
\end{split}
\end{equation*}
These inequalities show easily that
\begin{equation}\label{NW5}
I_{\alpha-1,2}(s)\lesssim_\alpha I_{\alpha,4}(0)\frac{M}{M+s}\quad\text{ for any }\quad s\in[0,T]\quad\text{ and }\quad \alpha\in(0,2).
\end{equation}

To apply this argument again we need to improve slightly on \eqref{NW5}. More precisely, we'd like to show that
\begin{equation}\label{NW6}
I_{1+\eps,2}(s)\lesssim_\eps I_{2,4}(0)\frac{M^{1-2\eps}}{(M+s)^{1-2\eps}}\quad\text{ for any }\quad s\in[0,T]\quad\text{ and }\quad \eps\in(0,1/10].
\end{equation}
Indeed, we estimate
\begin{equation*}
I_{1+\eps,2}(s)\lesssim II(s)+III(s),
\end{equation*}
where, using \eqref{NW5} and \eqref{NW4},
\begin{equation*}
\begin{split}
II(s):=\sum_{k=0}^l\int_{\Sigma_{s}^{c_0},\,r\leq M+s}&\frac{r^{1+\eps}}{M^{1+\eps}}\big[e(\phi_k)^2+e(\psi_k)^2\big]\,d\mu_s\\
&\lesssim I_{1-\eps/2,2}\frac{(M+s)^{7\eps/4}}{M^{7\eps/4}}\lesssim_\eps I_{2,4}(0)\frac{M^{1-2\eps}}{(M+s)^{1-2\eps}}
\end{split}
\end{equation*}
and
\begin{equation*}
\begin{split}
III(s):=\sum_{k=0}^l\int_{\Sigma_{s}^{c_0},\,r\geq M+s}&\frac{r^{1+\eps}}{M^{1+\eps}}\big[e(\phi_k)^2+e(\psi_k)^2\big]\,d\mu_s\\
&\lesssim I_{2-\eps/2,2}\frac{M^{1-3\eps/2}}{(M+s)^{1-3\eps/2}}\lesssim_\eps I_{2,2}(0)\frac{M^{1-3\eps/2}}{(M+s)^{1-3\eps/2}}.
\end{split}
\end{equation*}
The bound \eqref{NW6} follows.

We can now repeat the argument at the beginning of the proof, starting from the bounds,
\begin{equation*}
\begin{split}
&\int_{t_1}^{T}\frac{1}{M}I_{\eps,0}(s)\,ds\lesssim_\alpha I_{1+\eps,2}(t_1)\quad\text{ and }\quad I_{\eps,0}(s')\lesssim_\alpha I_{\eps,0}(s)\quad\text{ if }s\leq s',
\end{split}
\end{equation*}
which follow from \eqref{NW4} and Theorem \ref{MainTheorem}. Using now \eqref{NW6} it follows easily that
\begin{equation*}
I_{\eps,0}(s)\lesssim_\eps I_{2,4}(0)\frac{M^{2-2\eps}}{(M+s)^{2-2\eps}}\quad\text{ for any }\quad s\in[0,T]\quad\text{ and }\quad \eps\in(0,1/10],
\end{equation*}
which gives the conclusion of Corollary \ref{MainCoro}.
\end{proof}

We turn now to the proof of Lemma \ref{NW0}.

\begin{proof}[Proof of Lemma \ref{NW0}] In view of Theorem \ref{MainTheorem}, with the notation \eqref{NW1.5}, we know that
\begin{equation}\label{NW10}
\begin{split}
I_{\alpha,2}(t_2)+\sum_{k=0}^2&\int_{\mathcal{D}^{c_0}_{[t_1,t_2]}}\frac{r^\alpha}{M^{\alpha}}\Big\{\frac{(r-r^\ast)^2}{r^3}\frac{(\partial_1\phi_k)^2+(\partial_1\psi_k)^2+\psi_k^2(\sin\theta)^{-2}}{r^2}\\
&+\frac{1}{r^3}\big(\phi_k^2+\psi_k^2\big)+\frac{M^{2}}{r^{3}}\big[(\partial_2\phi_k)^2+(\partial_2\psi_k)^2\big]\Big\}\,d\mu\lesssim_\alpha I_{\alpha,2}(t_1),
\end{split}
\end{equation}
for any $t_1\leq t_2\in[0,T]$ and $\alpha\in(0,2)$. It suffices to prove that
\begin{equation}\label{NW11}
\begin{split}
\int_{\mathcal{D}^{c_0}_{[t_1,T]}}\frac{r^\alpha}{M^{\alpha}}\widetilde{\chi}(r)\frac{(\partial_1\phi)^2+(\partial_1\psi)^2+\psi^2(\sin\theta)^{-2}}{r^3}\,d\mu\lesssim_\alpha I_{\alpha,2}(t_1),
\end{split}
\end{equation}
where $\widetilde{\chi}:=\chi_{\geq 9M/4}-\chi_{\geq 4M}$. For this we use elliptic estimate and \eqref{NW10}. 

The equation for $\phi$ and the formula \eqref{waveop} show that
\begin{equation}\label{NW15}
\begin{split}
\g^{11}\Big[\partial_1^2\phi+\frac{\cos\theta}{\sin\theta}\partial_1\phi\Big]+\g^{22}\partial_2^2\phi+\frac{2\g^{11}\D_1B}{A}\partial_1\psi=-F_\phi,
\end{split}
\end{equation}
where
\begin{equation*}
\begin{split}
F_{\phi}&:=\g^{33}\partial_3^2\phi+2\g^{23}\partial_2\partial_3\phi+D^2\partial_2\phi+D^3\partial_3\phi\\
&+2\frac{\D^2B\D_2\psi+\D^3B\D_3\psi}{A}-2\frac{\D^\mu B\D_\mu B}{A^2}\phi+2\frac{\D^\mu B\D_\mu A}{A^2}\psi.
\end{split}
\end{equation*}
If follows from \eqref{NW10} that
\begin{equation}\label{NW16}
\int_{\mathcal{D}^{c_0}_{[t_1,T]}}\frac{M^4}{r^3}|F_{\phi}|^2\,d\mu\lesssim_\alpha I_{\alpha,2}(t_1).
\end{equation}
Using then integration by parts and \eqref{NW15}, we have
\begin{equation*}
\begin{split}
\int_{\mathcal{D}^{c_0}_{[t_1,T]}}&\frac{r^\alpha}{M^{\alpha}}\widetilde{\chi}(r)\frac{(\partial_1\phi)^2}{r^3}\,d\mu\lesssim \int_{[t_1,T]\times(0,\pi)\times(c_0,\infty)}\widetilde{\chi}(r)\frac{(\partial_1\phi)^2}{M^3}r^2(\sin\theta)\,drd\theta dt\\
&\lesssim \Big|\int_{[t_1,T]\times(0,\pi)\times(c_0,\infty)}\widetilde{\chi}(r)\phi\cdot \Big[\partial_1^2\phi+\frac{\cos\theta}{\sin\theta}\partial_1\phi\Big]\frac{r^2}{M^3}(\sin\theta)\,drd\theta dt\Big|\\
&\lesssim \Big|\int_{[t_1,T]\times(0,\pi)\times(c_0,\infty)}\widetilde{\chi}(r)\phi\cdot \Big[\Delta\partial_2^2\phi+\frac{2\D_1B}{A}\partial_1\psi+\frac{F_\phi}{\g^{11}}\Big]\frac{r^2}{M^3}(\sin\theta)\,drd\theta dt\Big|.
\end{split}
\end{equation*}
Using \eqref{NW10}, \eqref{NW16}, and integration by parts it follows that
\begin{equation}\label{NW17}
\begin{split}
\int_{\mathcal{D}^{c_0}_{[t_1,T]}}&\frac{r^\alpha}{M^{\alpha}}\widetilde{\chi}(r)\frac{(\partial_1\phi)^2}{r^3}\,d\mu\lesssim_\alpha I_{\alpha,2}(t_1)+[I_{\alpha,2}(t_1)]^{1/2}\Big(\int_{\mathcal{D}^{c_0}_{[t_1,T]}}\widetilde{\chi}(r)\frac{(\partial_1\psi)^2}{r^3}\,d\mu\Big)^{1/2}.
\end{split}
\end{equation}

Similarly, the equation for $\psi$ and the formula \eqref{waveop} show that
\begin{equation*}
\begin{split}
\g^{11}\Big[\partial_1^2\psi+\frac{\cos\theta}{\sin\theta}\partial_1\psi-\frac{4(\cos\theta)^2}{(\sin\theta)^2}\psi\Big]+\g^{22}\partial_2^2\psi-\frac{2\g^{11}\D_1B}{A}\partial_1\phi=-F_\psi,
\end{split}
\end{equation*}
where $F_{\psi}$ satisfies the same bound \eqref{NW16} as $F_{\phi}$, and the additional term in the left-hand side comes from the fraction $\frac{2\cos\theta}{\sin\theta}$ in $A^{-1}\D_1A$ (see \eqref{ell12}). Integrating by parts as before we have
\begin{equation*}
\begin{split}
\int_{\mathcal{D}^{c_0}_{[t_1,T]}}&\frac{r^\alpha}{M^{\alpha}}\widetilde{\chi}(r)\frac{(\partial_1\psi)^2+\psi^2(\sin\theta)^{-2}}{r^3}\,d\mu\lesssim_\alpha I_{\alpha,2}(t_1)+[I_{\alpha,2}(t_1)]^{1/2}\Big(\int_{\mathcal{D}^{c_0}_{[t_1,T]}}\widetilde{\chi}(r)\frac{(\partial_1\phi)^2}{r^3}\,d\mu\Big)^{1/2}.
\end{split}
\end{equation*}
The desired bound \eqref{NW11} follows using also \eqref{NW17}.
\end{proof}

\appendix
\section{Explicit formulas in Kerr spaces}\label{KerrExplicit}

Recall the Kerr spacetimes $\mathcal{K}(m,a)$, in standard Boyer--Lindquist coordinates,
\begin{equation}\label{rak1}
\g=-\frac{q^2\Delta}{\Sigma^2}(dt)^2+\frac{\Sigma^2(\sin\theta)^2}{q^2}\Big(d\phi-\frac{2aMr}{\Sigma^2}dt\Big)^2 +\frac{q^2}{\Delta}(dr)^2+q^2(d\theta)^2,
\end{equation}
where
\begin{equation}\label{rak2}
\begin{cases}
&\Delta=r^2+a^2-2Mr;\\
&q^2=r^2+a^2(\cos\theta)^2;\\
&\Sigma^2=(r^2+a^2)q^2+2Mra^2(\sin\theta)^2=(r^2+a^2)^2-a^2(\sin\theta)^2\Delta.
\end{cases}
\end{equation}
Observe that
\begin{equation}\label{rak3}
(2Mr-q^2)\Sigma^2=-q^4\Delta+4a^2M^2r^2(\sin\theta)^2.
\end{equation}

Recall the change of variables \eqref{cha1}--\eqref{cha2} and let
\begin{equation*}
p:=\frac{2Mr}{q^2}.
\end{equation*}
Therefore
\begin{equation*}
\frac{\Sigma^2}{q^2}=q^2+(p+1)a^2(\sin\theta)^2,\qquad \Delta=q^2(1-p)+a^2(\sin\theta)^2.
\end{equation*}
Recall that
\begin{equation}\label{rak4}
\partial_1=\partial_\theta=\frac{d}{d\theta},\qquad\partial_2=\partial_r=\frac{d}{dr},\qquad\partial_3=\partial_t=\frac{d}{dt_+}=\T,\qquad\partial_4=\partial_\phi=\frac{d}{d\phi_+}=\Z.
\end{equation}
The nontrivial components of the metric $\g$ become
\begin{equation}\label{exp1.1}
\begin{split}
&\g_{11}=q^2,\quad \g_{33}=p-1,\quad \g_{34}=-a(\sin\theta)^2p,\quad \g_{44}=q^2(\sin\theta)^2+(p+1)a^2(\sin\theta)^4,\\
&\g_{22}=\frac{q^2}{\Delta}(1-\chi^2)+(p+1)\chi^2,\quad\g_{23}=p\chi,\quad \g_{24}=-a(\sin\theta)^2(p+1)\chi,
\end{split}
\end{equation}
and, letting $\mathrm{Det}:=-q^2(\sin\theta)^2$,
\begin{equation}\label{exp2.1}
\begin{split}
&\g^{11}=\frac{1}{\g_{11}}=\frac{1}{q^2},\\
&\g^{22}=\frac{\g_{33}\g_{44}-\g_{34}^2}{\mathrm{Det}}=\frac{\Delta}{q^2},\\
&\g^{23}=\frac{\g_{24}\g_{34}-\g_{23}\g_{44}}{\mathrm{Det}}=p\chi,\\
&\g^{24}=\frac{\g_{23}\g_{34}-\g_{33}\g_{24}}{\mathrm{Det}}=\frac{a\chi}{q^2},\\
&\g^{33}=\frac{\g_{22}\g_{44}-\g_{24}^2}{\mathrm{Det}}=-(p+1)\chi^2-\frac{q^2+(p+1)a^2(\sin\theta)^2}{\Delta}(1-\chi^2),\\
&\g^{34}=\frac{\g_{24}\g_{23}-\g_{22}\g_{34}}{\mathrm{Det}}=\frac{-ap}{\Delta}(1-\chi^2),\\
&\g^{44}=\frac{\g_{22}\g_{33}-\g_{23}^2}{\mathrm{Det}}=\frac{\Delta-a^2(\sin\theta)^2(1-\chi^2)}{q^2\Delta(\sin\theta)^2}.
\end{split}
\end{equation}

The metric $\g$ extends to the larger open set
\begin{equation*}
\widetilde{R}=\{(\theta,r,t_+,\phi_+)\in(-\pi,\pi)\times(0,\infty)\times\mathbb{R}\times\mathbb{S}^1\}.
\end{equation*}
Recall also the sets, see \eqref{surf1}--\eqref{surf3},
\begin{equation}\label{surfs}
\begin{split}
&\mathcal{D}^{c}_I=\{(\theta,r,t_+,\phi_+)\in\widetilde{R}:t_+\in I\text{ and }r>c\},\\
&\Sigma^c_t:=\{(\theta,r,t_+,\phi_+)\in\widetilde{R}:t_+=t\text{ and }r>c\},\\
&\mathcal{N}^{c}_I:=\{(\theta,r,t_+,\phi_+)\in\widetilde{R}:t_+\in I\text{ and }r=c\},
\end{split}
\end{equation}
defined for $c\in(0,\infty)$, $t\in\mathbb{R}$, and intervals $I\subseteq\mathbb{R}$.

Notice that
\begin{equation}\label{deriv}
\begin{split}
&\partial_1(q^2)=-2a^2\sin\theta\cos\theta,\qquad \partial_2(q^2)=2r,\\
&\partial_1p=\frac{4Mra^2\sin\theta\cos\theta}{q^4},\qquad \partial_2p=-\frac{2M(r^2-a^2(\cos\theta)^2)}{q^4}.
\end{split}
\end{equation}

Recall the general formula
\begin{equation*}
\Gamma_{\mu\al\be}=\g(\D_{\partial_\be}\partial_\al,\partial_\mu)=\frac{1}{2}(\partial_\al\g_{\be\mu}+\partial_\be\g_{\al\mu}-\partial_\mu\g_{\al\be}).
\end{equation*}
In the case of $\Z$-invariant functions $f$, i.e. if $\Z(f)=0$, we have the general formula
\begin{equation}\label{waveop}
\begin{split}
\square f&=\g^{\al\be}\partial_\al\partial_\be f-\g^{\al\be}\g^{\mu\nu}\Gamma_{\mu\al\be}\partial_\nu f\\
&=\g^{\al\be}\partial_\al\partial_\be f+\big[\partial_\mu\g^{\mu\nu}+(1/2)\g^{\mu\nu}\partial_\mu\log\big|q^4(\sin\theta)^2\big|\big]\partial_\nu f\\
&=\g^{11}\partial_1^2f+\g^{22}\partial_2^2f+\g^{33}\partial_3^2f+2\g^{23}\partial_2\partial_3f+\big[\partial_1\g^{11}+(1/2)\g^{11}\partial_1\log\big|q^4(\sin\theta)^2\big|\big]\partial_1 f\\
&+\big[\partial_2\g^{22}+(1/2)\g^{22}\partial_2\log\big|q^4(\sin\theta)^2\big|\big]\partial_2 f+\big[\partial_2\g^{23}+(1/2)\g^{23}\partial_2\log\big|q^4(\sin\theta)^2\big|\big]\partial_3 f\\
&=\g^{11}\Big[\partial_1^2f+\frac{\cos\theta}{\sin\theta}\partial_1f\Big]+\g^{22}\partial_2^2f+\g^{33}\partial_3^2f+2\g^{23}\partial_2\partial_3f+D^2\partial_2 f+D^3\partial_3 f,
\end{split}
\end{equation}
where
\begin{equation*}
D^2:=\partial_2\g^{22}+\g^{22}\frac{2r}{q^2}=\frac{2r-2M}{q^2},
\qquad D^3:=\partial_2\g^{23}+\g^{23}\frac{2r}{q^2}=\frac{2M\chi(r)+2Mr\chi'(r)}{q^2}.
\end{equation*}
Also, if $m$ is a $1$-form satisfying $m_4=0$ and $\partial_4 m_\alpha=0$, $\alpha\in\{1,2,3,4\}$, then
\begin{equation}\label{divop}
\begin{split}
&\D^\alpha m_\alpha=\g^{\al\be}\partial_\al m_\be-\g^{\al\be}\g^{\mu\nu}\Gamma_{\mu\al\be}m_\nu\\
&=\g^{11}\Big[\partial_1m_1+\frac{\cos\theta}{\sin\theta}m_1\Big]+\g^{22}\partial_2m_2+\g^{33}\partial_3m_3+\g^{23}(\partial_2m_3+\partial_3m_2)+D^2m_2+D^3m_3.
\end{split}
\end{equation}

\subsubsection{Vector-fields}\label{vectors}
Letting
\begin{equation*}
\pi_{\al\be}=(\mathcal{L}_{\partial_2}\g)_{\al\be}=\Gamma_{\al 2 \be}+\Gamma_{\be 2 \al},
\end{equation*}
we calculate
\begin{equation}\label{kra7}
\begin{split}
&\pi_{\al\be}=\partial_2\g_{\al\be},\\
&\pi^{\al\be}=\g^{\al\mu}\g^{\be\nu}\pi_{\mu\nu}=\g^{\al\mu}\g^{\be\nu}\partial_2\g_{\mu\nu}=-\g^{\be\nu}\g_{\mu\nu}\partial_2\g^{\al\mu}=-\partial_2\g^{\al\be},\\
&\pi_{\al\be}\g^{\al\be}=\partial_2\log|q^4(\sin\theta)^2|=4r/q^2.
\end{split}
\end{equation}

Therefore, for any vector field
\begin{equation}\label{kra7.1}
X=f(r)\partial_2+g(r)\partial_3,
\end{equation}
we calculate
\begin{equation}\label{kra7.2}
\begin{split}
{}^{(X)}\pi^{\mu\nu}&:=\D^\mu X^\nu+\D^\nu X^\mu\\
&=f\pi^{\mu\nu}+(\D^\mu f\delta_2^\nu+\D^\nu f\delta_2^\mu)+(\D^\mu g\delta_3^\nu+\D^\nu g\delta_3^\mu)\\
&=f\pi^{\mu\nu}+f'(r)(\g^{\mu 2} \delta_2^\nu+\g^{\nu 2} \delta_2^\mu)+g'(r)(\g^{\mu 2} \delta_3^\nu+\g^{\nu 2} \delta_3^\mu).
\end{split}
\end{equation}

For any $1$-form $Y$ with $Y_4=0$ let
\begin{equation}\label{kra7.3}
{}^{(Y)}Q_{\mu\nu}=Y_\mu Y_\nu-(1/2)\g_{\mu\nu}(Y_\rho Y^\rho).
\end{equation}
We calculate the contraction 
\begin{equation*}
\begin{split}
&{}^{(Y)}Q_{\mu\nu}{}^{(X)}\pi^{\mu\nu}={}^{(X)}\pi^{\mu\nu}Y_\mu Y_\nu-(1/2)\g_{\mu\nu}(Y_\rho Y^\rho){}^{(X)}\pi^{\mu\nu}\\
&=f(r)\pi^{\mu\nu}Y_\mu Y_\nu+2f'(r)Y^2Y_2+2g'(r)Y^2Y_3-(Y_\rho Y^\rho)[2rf(r)/q^2+f'(r)]\\
&=(Y_1)^2\Big[f(r)\pi^{11}-\frac{2rf(r)\g^{11}}{q^2}-f'(r)\g^{11}\Big]+
(Y_2)^2\Big[f(r)\pi^{22}+f'(r)\g^{22}-\frac{2rf(r)\g^{22}}{q^2}\Big]\\
&+(Y_3)^2\Big[f(r)\pi^{33}+2g'(r)\g^{23}-f'(r)\g^{33}-\frac{2rf(r)\g^{33}}{q^2}\Big]\\
&+2Y_2Y_3\Big[f(r)\pi^{23}+g'(r)\g^{22}-\frac{2rf(r)\g^{23}}{q^2}\Big].
\end{split}
\end{equation*}
Using also the formulas \eqref{kra7} and \eqref{exp2.1} this simplifies to
\begin{equation}\label{kra7.4}
\begin{split}
{}^{(Y)}Q_{\mu\nu}{}^{(X)}\pi^{\mu\nu}&=(Y_1)^2\frac{-f'(r)}{q^2}+
(Y_2)^2\frac{-f(r)(2r-2M)+f'(r)\Delta}{q^2}\\
&+(Y_3)^2\Big[-f(r)\partial_2\g^{33}+2g'(r)\g^{23}-f'(r)\g^{33}-\frac{2rf(r)\g^{33}}{q^2}\Big]\\
&+2Y_2Y_3\frac{-2Mrf(r)\chi'(r)-2Mf(r)\chi(r)+g'(r)\Delta}{q^2}.
\end{split}
\end{equation}

Recall the vector-fields $n=-\g^{\mu\nu}\partial_\nu u_+\partial_\mu=-\g^{3\mu}\partial_\mu$ and $k=\g^{\mu\nu}\partial_\nu r\partial_\mu=\g^{2\mu}\partial_\mu$, defined in $\widetilde{R}$, which are normal to the hypersurfaces $\Sigma^c_t$ and $\mathcal{N}^{c}_I$ respectively. We calculate
\begin{equation*}
{}^{(Y)}Q(n,\partial_2)=-\g^{3\mu}Y_\mu Y_2=-\g^{32}(Y_2)^2-\g^{33}Y_2Y_3,
\end{equation*}
\begin{equation*}
{}^{(Y)}Q(n,\partial_3)=-\g^{3\mu}Y_\mu Y_3+(1/2)(Y_\rho Y^\rho)=(1/2)[\g^{11}(Y_1)^2+\g^{22}(Y_2)^2-\g^{33}(Y_3)^2],
\end{equation*}
\begin{equation*}
{}^{(Y)}Q(k,\partial_2)=\g^{2\mu}Y_\mu Y_2-(1/2)(Y_\rho Y^\rho)=(1/2)[-\g^{11}(Y_1)^2+\g^{22}(Y_2)^2-\g^{33}(Y_3)^2],
\end{equation*}
and
\begin{equation*}
{}^{(Y)}Q(k,\partial_3)=\g^{2\mu}Y_\mu Y_3=\g^{23}(Y_3)^2+\g^{22}Y_2Y_3.
\end{equation*}
Therefore, if $X=f(r)\partial_2+g(r)\partial_3$ as in \eqref{kra7.1} then
\begin{equation}\label{Qy1}
\begin{split}
 2{}^{(Y)}Q(n,X)&=(Y_1)^2[g(r)\g^{11}]+(Y_2)^2[g(r)\g^{22}-2f(r)\g^{23}]\\
&+(Y_3)^2[-g(r)\g^{33}]+2Y_2Y_3[-f(r)\g^{33}]
\end{split}
\end{equation}
and 
\begin{equation}\label{Qy2}
\begin{split}
 2{}^{(Y)}Q(k,X)&=(Y_1)^2[-f(r)\g^{11}]+(Y_2)^2[f(r)\g^{22}]\\
&+(Y_3)^2[-f(r)\g^{33}+2g(r)\g^{23}]+2Y_2Y_3[g(r)\g^{22}].
\end{split}
\end{equation}

\subsubsection{Hardy inequalities}\label{Hardy} In this subsection we prove the following lemma:

\begin{lemma}\label{GeneralLemma}

(i) If $c\geq c_0$ and $f\in H^1_{\mathrm{loc}}((c,\infty))$ satisfies $\lim\limits_{D\to\infty}\int_D^{2D}|f(r)|^2\,dr=0$ then
\begin{equation}\label{eli1}
\int_{c}^\infty|f/r|^2\cdot r^2\,dr\lesssim \int_{c}^\infty|f'|^2\cdot r^2\,dr.
\end{equation}

(ii) If $g\in H^1_{\mathrm{loc}}((0,\pi))$ and $p\in[0,10]$ then
\begin{equation}\label{eli2}
\int_0^\pi|g|^2(\sin\theta)^p\,d\theta\lesssim \int_0^\pi|g'|^2(\sin\theta)^{p+2}\,d\theta+\int_0^\pi|g|^2(\sin\theta)^{p+2}\,d\theta.
\end{equation}

(iii) If $f\in H^1_{\mathrm{loc}}((0,\pi))$ then
\begin{equation}\label{eli3}
\int_0^\pi|f'|^2\sin\theta\,d\theta+\int_0^\pi|f|^2(\sin\theta)^{-1}\,d\theta\approx \int_0^\pi\Big|f'-\frac{2\cos\theta}{\sin\theta}f\Big|^2\sin\theta\,d\theta+\int_0^\pi|f|^2\sin\theta\,d\theta.
\end{equation}

(iv) If $g\in L^2_{\mathrm{loc}}((0,\pi))$ then
\begin{equation}\label{eli2.5}
\int_0^\pi|g|^2(\sin\theta)^{-1}\,d\theta\lesssim \int_0^\pi\Big|g'+\frac{\cos\theta}{\sin\theta}g\Big|^2\sin\theta\,d\theta+\int_0^\pi|g|^2\sin\theta\,d\theta.
\end{equation}

(v) If $f\in H^1_{\mathrm{loc}}((0,\pi))$ then
\begin{equation}\label{eli2.8}
\begin{split}
\int_0^{\pi} &|f''|^2\sin\theta+|f'|^2(\sin\theta)^{-1}+|f|^2(\sin\theta)^{-3}\,d\theta\\
&\lesssim \int_0^\pi\Big|f''+\frac{\cos\theta}{\sin\theta}f'-\frac{4(\cos\theta)^2}{(\sin\theta)^2}f\Big|^2\sin\theta\,d\theta+\int_0^\pi |f'|^2\sin\theta+|f|^2(\sin\theta)^{-1}\,d\theta.
\end{split}
\end{equation}
\end{lemma}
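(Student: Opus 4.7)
The plan is to prove each of the five inequalities by the same two-step scheme: (a) rewrite the weighted zeroth-order integrand as an exact derivative of a weight times $g^2$ (or $f^2$) plus a positive weighted zeroth-order remainder, (b) integrate by parts, and (c) close the estimate by Cauchy--Schwarz together with absorption. The only nontrivial point is keeping the boundary contributions under control at the singular endpoints $r=\infty$, $\theta=0$, $\theta=\pi$; these are either killed by the vanishing of the weight or by the stated decay hypothesis. Throughout, implicit constants may depend on $p$.

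For (i), start from the identity $\partial_r[(r-c)f^2]=f^2+2(r-c)ff'$ and integrate on $(c,R)$. By Cauchy--Schwarz the cross term is bounded by $2(\int_c^R f^2\,dr)^{1/2}(\int_c^R r^2(f')^2\,dr)^{1/2}$. The decay hypothesis $\lim_{D\to\infty}\int_D^{2D}|f|^2\,dr=0$ and the mean value theorem yield a sequence $R=r_n\in[D_n,2D_n]$ with $D_n\to\infty$ along which $(R-c)f(R)^2\to 0$; letting $n\to\infty$ and absorbing gives \eqref{eli1}. For (ii), the key algebraic identity is
\begin{equation*}
(p+1)\sin^p\theta=\partial_\theta\bigl(\sin^{p+1}\theta\cos\theta\bigr)+(p+2)\sin^{p+2}\theta,
\end{equation*}
which is immediate from $\cos^2\theta=1-\sin^2\theta$. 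Multiplying by $g^2$ and integrating by parts on $(0,\pi)$, the boundary terms vanish because $\sin^{p+1}$ does at $\theta=0,\pi$ for $p\geq 0$. One is then left with a cross term $-2\int gg'\sin^{p+1}\theta\cos\theta\,d\theta$, which Cauchy--Schwarz bounds by $2(\int g^2\sin^p\theta\,d\theta)^{1/2}(\int (g')^2\sin^{p+2}\theta\,d\theta)^{1/2}$ (using $\cos^2\theta\leq 1$), and absorption yields \eqref{eli2}.

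For (iii)--(v) I plan to exploit the ``conjugation'' structure of the weighted operators. Noting that $\partial_\theta f-2\cot\theta\, f=\sin^2\theta\,\partial_\theta(f/\sin^2\theta)$ and $\partial_\theta g+\cot\theta\, g=\sin^{-1}\theta\,\partial_\theta(g\sin\theta)$, the right-hand sides of (iii) and (iv) acquire a clean $L^2$-with-weight form in the new variables $F:=f/\sin^2\theta$ and $G:=g\sin\theta$. For (iii) one expands $(f'-2\cot\theta\,f)^2\sin\theta=(f')^2\sin\theta-4ff'\cos\theta+4f^2\cos^2\theta/\sin\theta$, and the middle term $\int 2(f^2)'\cos\theta\,d\theta$ integrates by parts to $-2[f^2\cos\theta]_0^\pi+2\int f^2\sin\theta\,d\theta$; the endpoint values are controlled because $\int f^2/\sin\theta\,d\theta<\infty$ forces $f$ to vanish suitably at $0,\pi$. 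Since $\cos^2\theta/\sin\theta=1/\sin\theta-\sin\theta$, rearranging gives \eqref{eli3} (both directions simultaneously). The proof of (iv) is essentially the same, rewriting in terms of $G$ and then applying a Hardy-type step. For (v), I apply the same conjugation idea at second-order: writing $F=f/\sin^2\theta$ converts $f''+\cot\theta\,f'-4\cot^2\theta\,f$ into a weighted Sturm--Liouville expression in $F$, from which the same expand-and-integrate-by-parts strategy, iterated twice, yields the three-term Hardy estimate.

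The main obstacle will be the second-order estimate (v): justifying the integration by parts rigorously requires that the singular endpoint contributions at $\theta=0,\pi$ vanish, which in turn requires a density/approximation argument (approximating $f$ by functions that vanish near the endpoints and passing to the limit using only the finite right-hand side of \eqref{eli2.8}). The subtlety is that the potential $-4\cos^2\theta/\sin^2\theta$ is super-critical in the Hardy sense near the endpoints, and one must verify that all the quadratic boundary contributions produced by the integration by parts can be bounded by the positive terms already appearing on the right. Once the boundary analysis is settled, the remaining estimate is algebraic and follows from completion of squares together with (iii) applied to $f'$ and (iv) applied to $f$.
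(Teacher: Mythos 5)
Your overall strategy --- integrate against a weight, integrate by parts, Cauchy--Schwarz, absorb, conjugate by powers of $\sin\theta$ --- is the same as the paper's, and parts (i) and (ii) are essentially right (the paper prefers to work against compactly supported cutoffs $K_{\delta,D}$ rather than integrating directly, precisely so that the endpoint contributions for a general $H^1_{\mathrm{loc}}$ function never have to be discussed). The real gap is in (iii). Carrying out your expansion correctly (note the sign: $\int(f^2)'\cos\,d\theta=[f^2\cos]_0^\pi+\int f^2\sin\,d\theta$, not $-[f^2\cos]_0^\pi+\int f^2\sin\,d\theta$), and writing $A=\int(f')^2\sin$, $B=\int f^2/\sin$, $C=\int f^2\sin$, the identity you get after IBP (with the boundary term killed) is
\begin{equation*}
\int\Big|f'-\frac{2\cos\theta}{\sin\theta}f\Big|^2\sin\,d\theta+\int f^2\sin\,d\theta \;=\; A+4B-5C,
\end{equation*}
while the left side of \eqref{eli3} is $A+B$. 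The direction $A+4B-5C\lesssim A+B$ is immediate since $C\le B$; but the reverse $A+B\lesssim A+4B-5C$ does \emph{not} follow by rearrangement. It is equivalent to $5KC\le (K-1)A+(4K-1)B$ for some $K$, and since $C/B$ can be made arbitrarily close to $1$ (e.g.\ $f=\sin^N\theta$ with $N\to\infty$), the bound $C\le B$ alone is useless. You need a genuine additional Hardy input, e.g.\ the exact identity $A+B-2C=\int|\partial_\theta(f/\sin)|^2\sin^3\,d\theta\ge 0$ (itself obtained by the substitution $g=f/\sin$ and another IBP), together with $A\le 2(A+B-2C)+2(B-C)$, before one can conclude. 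The paper avoids this entirely by performing the substitution \emph{first}: with $f=g\sin^2\theta$ the combination $f'-2\cot\theta\,f$ collapses to $\sin^2\theta\,g'$, one direction is then trivial, and the other reduces to part (ii) with $p=3$. I would strongly recommend that order of operations rather than the expand-and-rearrange route.

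Two smaller remarks. For (iv), your ``applying a Hardy-type step'' hides some real work: after conjugating $g=h/\sin$ the needed estimate is $\int h^2(\sin\theta)^{-3}\lesssim\int (h')^2(\sin\theta)^{-1}+\int h^2(\sin\theta)^{-1}$, which is \emph{not} covered by (ii) (formally $p=-3$, outside the allowed range), so the paper proves it by a dyadic argument using that $h$ extends continuously to $[0,\pi]$ with $h(0)=h(\pi)=0$. For (v), your concern about the super-critical potential and boundary terms is well placed, and the paper's resolution is exactly the conjugation $f=g\sin^2\theta$ followed by a reduction to the same dyadic estimate, again circumventing the boundary analysis.
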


\begin{proof} The inequalities in this lemma are standard Hardy-type inequalities, and we provide the proofs mostly for sake of 
completeness. 

For (i) we may assume that $f$ is real-valued and
\begin{equation*}
\int_{c}^\infty|f'(r)|^2r^2\,dr=1.
\end{equation*}
Given $\delta>0$ small and $D\gg 1$ we fix a smooth function $K=K_{\delta,D}:\mathbb{R}\to\mathbb{R}$ supported in the interval $[c+\delta/2,2D]$ with the properties
\begin{equation}\label{eli4}
\begin{split}
&K'(r)=1\qquad\text{ if }r\in[c+\delta,D],\\
&|K'(r)|\lesssim 1\qquad\text{ if }r\in[D,2D],\\
&K'\text{ is increasing on the interval }[c+\delta/2,c+\delta].
\end{split}
\end{equation}
By taking $D$ sufficiently large, we may assume that
\begin{equation*}
\int_D^{2D}|f(r)|^2\,dr\leq 1.
\end{equation*}
Notice that $K(r)\lesssim rK'(r)^{1/2}$ for any $r\in[c,D]$, which follows easily from \eqref{eli4}. Then we estimate, using integration by parts
\begin{equation*}
\begin{split}
\Big|\int_{c}^Df(r)^2K'(r)\,dr\Big|&\lesssim \Big|\int_{\mathbb{R}}f(r)^2K'(r)\,dr\Big|+\int_D^{2D}|f(r)|^2\,dr\\
&\lesssim\int_{\mathbb{R}}|f(r)||f'(r)||K(r)|\,dr+1\\
&\lesssim\int_{c}^D|f(r)||f'(r)||K(r)|\,dr+1\\
&\lesssim \Big|\int_{c}^D|f(r)|^2K'(r)\,dr\Big|^{1/2}\Big|\int_{c}^D|f'(r)|r^2\,dr\Big|^{1/2}+1.
\end{split}
\end{equation*}
Therefore
\begin{equation*}
\Big|\int_{c}^Df(r)^2K'(r)\,dr\Big|\lesssim 1,
\end{equation*}
and the desired inequality follows by letting $\delta\to 0$ and $D\to\infty$.

To prove (ii) we may assume that $g$ is real-valued and 
\begin{equation*}
\int_0^\pi|g'(\theta)|^2(\sin\theta)^{p+2}\,d\theta+\int_0^\pi|g(\theta)|^2(\sin\theta)^{p+2}\,d\theta=1.
\end{equation*}
As before, given $\delta>0$ small we fix a smooth function $K=K_{\delta}:\mathbb{R}\to\mathbb{R}$ supported in the interval $[\delta/2,1/2]$ with the properties
\begin{equation}\label{eli5}
\begin{split}
&K'(\theta)=(\sin\theta)^p\qquad\text{ if }\theta\in[\delta,1/4],\\
&|K'(\theta)|\lesssim 1\qquad\text{ if }\theta\in[1/4,1/2],\\
&K'\text{ is increasing on the interval }[\delta/2,\delta].
\end{split}
\end{equation}
As before, we notice that these assumptions imply that $K(\theta)\lesssim (\sin\theta)^{(p+2)/2}K'(\theta)^{1/2}$ for any $\theta\in[0,1/4]$. Then we estimate, using integration by parts,
\begin{equation*}
\begin{split}
\Big|\int_{0}^{1/4}g(\theta)^2K'(\theta)\,d\theta\Big|&\lesssim \Big|\int_{0}^{1/2}g(\theta)^2K'(\theta)\,d\theta\Big|+1\\
&\lesssim\int_0^{1/2}|g(\theta)||g'(\theta)||K(\theta)|\,d\theta+1\\
&\lesssim\int_0^{1/4}|g(\theta)||g'(\theta)||K(\theta)|\,d\theta+1\\
&\lesssim \Big|\int_{0}^{1/4}|g(\theta)|^2K'(\theta)\,d\theta\Big|^{1/2}\Big|\int_{0}^{1/4}|g'(\theta)|^2(\sin\theta)^{p+2}\,d\theta\Big|^{1/2}+1.
\end{split}
\end{equation*}
Therefore
\begin{equation*}
\Big|\int_{0}^{1/4}g(\theta)^2K'(\theta)\,d\theta\Big|\lesssim 1.
\end{equation*}
Letting $\delta\to 0$ it follows that
\begin{equation*}
\int_0^{1/4}|g(\theta)|^2(\sin\theta)^p\,d\theta\lesssim 1.
\end{equation*}
The change of variables $\theta\to\pi-\theta$ now shows that
\begin{equation*}
\int_{\pi-1/4}^{\pi}|g(\theta)|^2(\sin\theta)^p\,d\theta\lesssim 1,
\end{equation*}
and the desired estimate follows.

To prove (iii), we notice first that the right-hand side of \eqref{eli3} is clearly dominated by the left-hand side. To prove the reverse inequality, let $f(\theta)=(\sin\theta)^2 g(\theta)$ and notice that
\begin{equation*}
f'(\theta)-\frac{2\cos\theta}{\sin\theta}f(\theta)=(\sin\theta)^2g'(\theta).
\end{equation*}
The desired bound follows from the inequality 
\begin{equation*}
\int_0^\pi|g(\theta)|^2(\sin\theta)^3\,d\theta\lesssim \int_0^\pi|g'(\theta)|^2(\sin\theta)^5\,d\theta+\int_0^\pi|g(\theta)|^2(\sin\theta)^5\,d\theta,
\end{equation*}
which is a consequence of \eqref{eli2}.

To prove (iv), we may assume that $g\in H^1_{\mathrm{loc}}((0,\pi))$ is real-valued and let $g(\theta)=h(\theta)/\sin\theta$. Then
\begin{equation*}
g'(\theta)+\frac{\cos\theta}{\sin\theta}g(\theta)=\frac{h'(\theta)}{\sin\theta}.
\end{equation*}
The inequality to prove becomes
\begin{equation}\label{eli2.6}
\int_0^\pi\frac{h(\theta)^2}{(\sin\theta)^3}\,d\theta\lesssim \int_0^\pi\frac{h'(\theta)^2}{\sin\theta}\,d\theta+\int_0^\pi\frac{h(\theta)^2}{\sin\theta}\,d\theta.
\end{equation}
This is nontrivial only if $h'\in L^2((0,\pi))$, which shows that $h'\in L^1((0,\pi))$. Therefore, in proving \eqref{eli2.6} we may assume that $h$ extends to a continuous function on the interval $[0,\pi]$ and $h(0)=h(\pi)=0$ (otherwise the right-hand side of \eqref{eli2.6} is equal to $\infty$). In particular, for any $\theta\in[0,\pi/2]$,
\begin{equation}\label{eli2.7}
h(\theta)=\int_{0}^\theta h'(\mu)\,d\mu.
\end{equation}
For $k\leq 0$ let $c_k:=2^{-k/2}\Big[\int_{[2^{k-1},2^k]}|h'(\mu)|^2\,d\mu\Big]^{1/2}$. The formula \eqref{eli2.7} above shows that
\begin{equation*}
|h(\theta)|\lesssim \sum_{k'\leq k}2^{k'}c_{k'}\qquad \text{ if }k\leq 0\text{ and }\theta\in[2^{k-1},2^{k}].
\end{equation*}
Therefore 
\begin{equation*}
\int_0^1\frac{h(\theta)^2}{(\sin\theta)^3}\,d\theta\lesssim \sum_{k\leq 0}\Big(\sum_{k'\leq k}2^{k'-k}c_{k'}\Big)^2\lesssim \sum_{k\leq 0}c_k^2\lesssim \int_0^1\frac{h'(\theta)^2}{\sin\theta}\,d\theta.
\end{equation*}

The change of variables $\theta\to\pi-\theta$ shows that 
\begin{equation*}
\int_{\pi-1}^\pi\frac{h(\theta)^2}{(\sin\theta)^3}\,d\theta\lesssim\int_{\pi-1}^\pi\frac{h'(\theta)^2}{\sin\theta}\,d\theta+\int_0^\pi\frac{h(\theta)^2}{\sin\theta}\,d\theta,
\end{equation*}
and the desired bound \eqref{eli2.6} follows.

To prove (v), we may assume that $f\in H^2_{\mathrm{loc}}((0,\pi))$ is real-valued and let $f(\theta)=g(\theta)(\sin\theta)^2$. Then
\begin{equation*}
f''(\theta)+\frac{\cos\theta}{\sin\theta}f'(\theta)-\frac{4(\cos\theta)^2}{(\sin\theta)^2}f(\theta)=(\sin\theta)^2g''(\theta)+3\sin\theta\cos\theta g'(\theta)-2(\sin\theta)^2g(\theta).
\end{equation*}
The inequality \eqref{eli2.8} becomes
\begin{equation*}
\begin{split}
\int_0^{\pi} &|g''|^2(\sin\theta)^5+|g'|^2(\sin\theta)^3+|g|^2\sin\theta\,d\theta\\
&\lesssim \int_0^\pi|(\sin\theta)^2g''+3\sin\theta\cos\theta g'|^2\sin\theta\,d\theta+\int_0^\pi |g'|^2(\sin\theta)^5+|g|^2(\sin\theta)^3\,d\theta.
\end{split}
\end{equation*}
In view of the inequality \eqref{eli2} with $p=1$ it suffices to prove that
\begin{equation*}
\int_0^{\pi} |g'|^2(\sin\theta)^3\,d\theta\lesssim \int_0^\pi|(\sin\theta)^2 g''+3\sin\theta\cos\theta g'|^2\sin\theta\,d\theta+\int_0^\pi |g'|^2(\sin\theta)^5\,d\theta.
\end{equation*}
Letting $h(\theta)=(\sin\theta)^3g'(\theta)$ this is equivalent to
\begin{equation*}
\int_0^\pi\frac{h(\theta)^2}{(\sin\theta)^3}\,d\theta\lesssim \int_0^\pi\frac{h'(\theta)^2}{\sin\theta}\,d\theta+\int_0^\pi\frac{h(\theta)^2}{\sin\theta}\,d\theta,
\end{equation*}
which was proved earlier, see \eqref{eli2.6}.
\end{proof}

\subsubsection{The main function spaces}\label{SpacesProp} We summarize now some of the main properties of the spaces $H^m(\Sigma^c_t)$ and $\widetilde{H}^m(\Sigma^c_t)$:

\begin{lemma}\label{App1.5}

Assume $t\in\mathbb{R}$ and $c\geq c_0$.

(i) If $f\in H^1(\Sigma_t^c)$ satisfies $\Z(f)=0$ then
\begin{equation}\label{ell63}
\|f\|_{\widetilde{H}^1(\Sigma_t^c)}\approx \|f\|_{H^1(\Sigma_t^c)}+\|(r\sin\theta)^{-1}f\|_{L^2(\Sigma_t^c)}.
\end{equation}

(ii) If $f\in H^2(\Sigma_t^c)$, satisfies $\Z(f)=0$ then
\begin{equation}\label{ell62}
\|f\|_{\widetilde{H}^2(\Sigma_t^c)}\approx \|f\|_{H^2(\Sigma_t^c)}+\|(r\sin\theta)^{-1}f\|_{H^{1}(\Sigma_t^c)}+\|(r\sin\theta)^{-2}f\|_{L^2(\Sigma_t^c)}.
\end{equation}
\end{lemma}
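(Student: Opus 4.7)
The strategy is to reduce both parts of the lemma to the one-dimensional Hardy-type inequalities (iii)--(v) of Lemma \ref{GeneralLemma} applied pointwise in the radial variable $r$ (and trivially in $\phi_+$ by $\Z$-invariance), then integrate against $dr$. The key geometric fact is that, uniformly on $r>c_0$, the induced volume element on $\Sigma_t^c$ is comparable to $r^2\sin\theta\, d\theta\, dr\, d\phi_+$: indeed $q^2\approx r^2$ and the coefficient $\g_{22}$ in \eqref{exp1.1} stays bounded above and away from zero. Moreover, since $2\cos\theta/\sin\theta$ depends only on $\theta$, the operators $\widetilde{\partial}_1$ and $\widetilde{\partial}_2=\partial_2$ commute.

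For part (i), only the extra term $\|(\widetilde{\partial}_1/r) f\|_{L^2(\Sigma_t^c)}$ is nontrivial, since $\|\widetilde{\partial}_2 f\|_{L^2}=\|\partial_2 f\|_{L^2}$ is already contained in $\|f\|_{H^1(\Sigma_t^c)}$. Applying Lemma \ref{GeneralLemma}(iii) pointwise to $\theta\mapsto f(\theta,r)$, multiplying by $dr$, and using the volume identification above, one obtains
\[
\|(\widetilde{\partial}_1/r) f\|_{L^2}^2 + \|f/r\|_{L^2}^2 \;\approx\; \|(\partial_1/r) f\|_{L^2}^2 + \|(r\sin\theta)^{-1} f\|_{L^2}^2.
\]
Since $\|(\partial_1/r)f\|_{L^2}$ and $\|f/r\|_{L^2}$ are both dominated by $\|f\|_{H^1}$ (and conversely the relevant angular piece of $\|f\|_{H^1}$ is controlled by $\|(\partial_1/r)f\|_{L^2}$), the equivalence \eqref{ell63} follows.

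For part (ii), the new ingredient is the second-order angular term $(\widetilde{\partial}_1/r)^2 f = r^{-2}\widetilde{\partial}_1^2 f$. A direct expansion of $\widetilde{\partial}_1$ gives
\[
\widetilde{\partial}_1^2 f = \partial_1^2 f - \frac{4\cos\theta}{\sin\theta}\,\partial_1 f + \frac{2+4(\cos\theta)^2}{(\sin\theta)^2}\,f,
\]
so $\|r^{-2}\widetilde{\partial}_1^2 f\|_{L^2(\Sigma_t^c)}^2$ is a combination of integrals of $|\partial_1^2 f|^2\sin\theta$, $|\partial_1 f|^2/\sin\theta$ and $|f|^2/(\sin\theta)^3$ in $\theta$, weighted by $dr$. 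These are precisely the three pieces appearing on the left-hand side of Lemma \ref{GeneralLemma}(v); applying that inequality pointwise in $r$ matches $\|(\widetilde{\partial}_1/r)^2 f\|_{L^2}$ (up to lower-order terms absorbed by part (i)) with $\|(r\sin\theta)^{-2} f\|_{L^2}$ together with the angular components of $\|(r\sin\theta)^{-1} f\|_{H^1}$ and $\|f\|_{H^2}$. The mixed term $\|(\widetilde{\partial}_1/r)\widetilde{\partial}_2 f\|_{L^2}=\|r^{-1}\widetilde{\partial}_1\partial_2 f\|_{L^2}$ is obtained by applying part (i) to $\partial_2 f$ (which is $\Z$-invariant when $f$ is), using $[\widetilde{\partial}_1,\partial_2]=0$; this produces exactly the radial derivative contribution to $\|(r\sin\theta)^{-1} f\|_{H^1}$. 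Finally $\|\widetilde{\partial}_2^2 f\|_{L^2}=\|\partial_2^2 f\|_{L^2}$ is already in $\|f\|_{H^2}$. Assembling these pieces and using part (i) to absorb redundant first-order terms yields \eqref{ell62}.

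The main obstacle is the bookkeeping in part (ii): one must verify that the assorted weighted angular terms that appear on expanding $\widetilde{\partial}_1^2$ match up, after applying Lemma \ref{GeneralLemma}(v), with $\|(r\sin\theta)^{-2}f\|_{L^2}$ and with both the $\partial_1$ and $\partial_2$ components of $\|(r\sin\theta)^{-1}f\|_{H^1}$, without generating any uncontrolled lower-order remainder. The cancellations go through cleanly precisely because the angular weights $\sin\theta$, $(\sin\theta)^{-1}$, $(\sin\theta)^{-3}$ appearing in the expansion match exactly those on both sides of (v).
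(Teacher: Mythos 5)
Your part (i) is essentially the paper's argument and is fine. Part (ii) has a real gap, and it is exactly where you assert "the cancellations go through cleanly."

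The second-order operator appearing on the right-hand side of Lemma~\ref{GeneralLemma}(v) is
\[
\L := \partial_1^2+\frac{\cos\theta}{\sin\theta}\partial_1-\frac{4(\cos\theta)^2}{(\sin\theta)^2},
\]
whereas you correctly computed
\[
\widetilde{\partial}_1^2 = \partial_1^2 - \frac{4\cos\theta}{\sin\theta}\partial_1 + \frac{2+4(\cos\theta)^2}{(\sin\theta)^2}.
\]
These are genuinely different: the coefficients of $\frac{\cos\theta}{\sin\theta}\partial_1$ are $+1$ versus $-4$, and the zero-order coefficients disagree as well, so the weights most certainly do \emph{not} "match exactly." Knowing that $|\widetilde{\partial}_1^2 f|^2$ is pointwise dominated by $|\partial_1^2 f|^2+(\sin\theta)^{-2}|\partial_1 f|^2+(\sin\theta)^{-4}|f|^2$ gives you the easy ($\lesssim$) direction of \eqref{ell62} by triangle inequality, but for the hard direction you must bound $\|(r\sin\theta)^{-2}f\|_{L^2}$ from \emph{above} by $\|(\widetilde{\partial}_1/r)^2 f\|_{L^2}$ plus lower-order terms, and cross-term cancellations in the square of a linear combination cannot be waved away. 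What (v) actually gives you is control by $\|r^{-2}\L f\|_{L^2}$; to convert this into control by $\|(\widetilde{\partial}_1/r)^2 f\|_{L^2}$ you need to bound the difference $\L - \widetilde{\partial}_1^2$, which, after re-expressing $\partial_1 f$ via $\widetilde{\partial}_1 f$, is $\frac{5\cos\theta}{\sin\theta}\widetilde{\partial}_1 f - 2f$. The troublesome piece is $\|(r\sin\theta)^{-1}(\widetilde{\partial}_1/r)f\|_{L^2}$, which is not a lower-order term absorbed by part (i): it requires a separate application of Lemma~\ref{GeneralLemma}(iii) to $\widetilde{\partial}_1 f$ (observe $\widetilde{\partial}_1(\widetilde{\partial}_1 f)=\widetilde{\partial}_1^2 f$, so (iii) controls $\int(\sin\theta)^{-1}|\widetilde{\partial}_1 f|^2$ by $\int|\widetilde{\partial}_1^2 f|^2\sin\theta + \int|\widetilde{\partial}_1 f|^2\sin\theta$). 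This bridge is the nontrivial step in the paper's proof and is missing from yours; without it the reverse inequality in \eqref{ell62} is unproved.
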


\begin{proof}[Proof of Lemma \ref{App1.5}] The bound \eqref{ell63} follows easily from the definitions and \eqref{eli3}. We prove now part (ii) and the bounds \eqref{ell62} for $m=2$. In view of the definition, and using also \eqref{ell63},
\begin{equation}\label{ell64}
\begin{split}
\|f\|_{\widetilde{H}^2(\Sigma_t^c)}&\approx \|f\|_{H^2(\Sigma_t^c)}+\|\widetilde{\partial}_2f\|_{\widetilde{H}^{1}(\Sigma_t^c)}+\|(\widetilde{\partial}_1/r)^2f\|_{L^2(\Sigma_t^c)}+\|(\widetilde{\partial}_1/r)f\|_{L^2(\Sigma_t^c)}\\
&\approx \|f\|_{H^2(\Sigma_t^c)}+\|(r\sin\theta)^{-1}\partial_2f\|_{L^2(\Sigma_t^c)}+\|(\widetilde{\partial}_1/r)^2f\|_{L^2(\Sigma_t^c)}+\|(\widetilde{\partial}_1/r)f\|_{L^2(\Sigma_t^c)}.
\end{split}
\end{equation}
Using \eqref{eli1}, we have
\begin{equation*}
\|(r\sin\theta)^{-1}\partial_2f\|_{L^2(\Sigma_t^c)}\lesssim \|\partial_2[(r\sin\theta)^{-1}f]\|_{L^2(\Sigma_t^c)}\lesssim \|(r\sin\theta)^{-1}f\|_{H^1(\Sigma_t^c)}.
\end{equation*}
Moreover, using the definition,
\begin{equation*}
\begin{split}
&\|(\widetilde{\partial}_1/r)^2f\|_{L^2(\Sigma_t^c)}+\|(\widetilde{\partial}_1/r)f\|_{L^2(\Sigma_t^c)}\\
&\lesssim \|(\partial_1/r)^2f\|_{L^2(\Sigma_t^c)}+\|[1+(r\sin\theta)^{-1}](\partial_1/r)f\|_{L^2(\Sigma_t^c)}+\|[1+(r\sin\theta)^{-2}]f\|_{L^2(\Sigma_t^c)}\\
&\lesssim \|f\|_{H^2(\Sigma_t^c)}+\|(r\sin\theta)^{-1}f\|_{H^1(\Sigma_t^c)}+\|(r\sin\theta)^{-2}f\|_{L^2(\Sigma_t^c)}.
\end{split}
\end{equation*}
Using also \eqref{ell64}, it follows that
\begin{equation*}
\|f\|_{\widetilde{H}^2(\Sigma_t^c)}\lesssim \|f\|_{H^2(\Sigma_t^c)}+\|(r\sin\theta)^{-1}f\|_{H^1(\Sigma_t^c)}+\|(r\sin\theta)^{-2}f\|_{L^2(\Sigma_t^c)},
\end{equation*}
as desired.

For the reverse inequality, using \eqref{ell64}, it remains to prove that
\begin{equation}\label{ell65}
\begin{split}
&\|(r\sin\theta)^{-2}f\|_{L^2(\Sigma_t^c)}+\|(r\sin\theta)^{-1}(\partial_1/r)f\|_{L^2(\Sigma_t^c)}\\
&\lesssim \|f\|_{H^2(\Sigma_t^c)}+\|(r\sin\theta)^{-1}\partial_2f\|_{L^2(\Sigma_t^c)}+\|(\widetilde{\partial}_1/r)^2f\|_{L^2(\Sigma_t^c)}+\|(\widetilde{\partial}_1/r)f\|_{L^2(\Sigma_t^c)}.
\end{split}
\end{equation}
Using \eqref{eli3},
\begin{equation*}
\|(r\sin\theta)^{-1}(\widetilde{\partial}_1/r)f\|_{L^2(\Sigma_t^c)}\lesssim\|(\widetilde{\partial}_1/r)^2f\|_{L^2(\Sigma_t^c)}+\|(\widetilde{\partial}_1/r)f\|_{L^2(\Sigma_t^c)}.
\end{equation*}
Also, using \eqref{eli2.8} and then \eqref{ell63},
\begin{equation*}
\begin{split}
&\|(r\sin\theta)^{-2}f\|_{L^2(\Sigma_t^c)}\\
&\lesssim\Big\|r^{-2}\Big[\partial_1^2+\frac{\cos\theta}{\sin\theta}\partial_1-\frac{4(\cos\theta)^2}{(\sin\theta)^2}\Big]f\Big\|_{L^2(\Sigma_t^c)}+\|(\partial_1/r)f\|_{L^2(\Sigma_t^c)}+\|(r\sin\theta)^{-1}f\|_{L^2(\Sigma_t^c)}\\
&\lesssim\|(\widetilde{\partial}_1/r)^2f\|_{L^2(\Sigma_t^c)}+\|(r\sin\theta)^{-1}(\widetilde{\partial}_1/r)f\|_{L^2(\Sigma_t^c)}+\|(\widetilde{\partial}_1/r)f\|_{L^2(\Sigma_t^c)}+\|f\|_{H^1(\Sigma_t^c)}.
\end{split}
\end{equation*}
The desired bound \eqref{ell65} follows from these two estimates.
\end{proof}

\end{document}